\documentclass{amsart}
\usepackage[mathscr]{eucal}
\usepackage{amssymb, amsmath,array, amscd}
\usepackage[OT2,T1]{fontenc}

%%%%%%%%%%%%%%%%%%%%%%%%%%%%%%%%%%%%%%%%%%%%%%%%%%%%%%%%%%%%%%%%%%%%%%%%
% MACROS

%%%%%%%%%%%%%%%%%%%%%%%%%%%%%%%%%%%%%%%%%%%%%%%%%%%%%%%%%%%%%%%%%%%%%%%%

%mmm

\newtheorem{thm}{Theorem}
\newtheorem{defi}{Definition}
\newtheorem{prop}{Proposition}
\newtheorem{lemma}{Lemma}
\newtheorem{claim}{Claim}

\newtheorem{rem}{Remark}
\newtheorem{cor}{Corollary}

\newtheorem{prob}{Problem}
\newtheorem{ex}{Example}

\def\M{\mathcal{M}}
\def\wt{\widetilde}

\def\de{\mathcal{D}}

\def\fol{\mathbb{F}\text{ol}}
\def\E{\mathcal{E}}

\def\Ga{\Gamma}

\def\p{\mathbb{P}}

\def\Te{\Theta}
\def\te{\theta}
\def\pa{\partial}
\def\sup{\supset}
\def\sub{\subset}
\def\De{\Delta}
\def\I{\mathcal{I}}
\def\d{\delta}

\def\emp{\emptyset}
\def\ov{\overline}
\def\om{\omega}

\def\fa{\mathcal{F}}
\def\a{\alpha}
\def\be{\beta}
\def\R{\mathbb{R}}

\def\C{\mathbb{C}}

\def\{{\lbrace}
\def\}{\rbrace}

\def\la{\lambda}

\def\U{\mathcal{U}}

\def\L{\mathcal{L}}
\def\g{\gamma}
\def\P{\mathcal{P}}
\def\N{\mathbb{N}}
\def\Si{\Sigma}
\def\te{\theta}
\def\si{\sigma}
\def\G{\mathcal G}

\def\Z{\mathbb{Z}}
\def\O{\mathcal{O}}
\def\*{\star}

\def\wh{\widehat}

\def\n{\mathcal{N}}

\sloppy

\setcounter{tocdepth}{2}

\begin{document}

\title[Gauss map]{The Gauss map of a projective foliation}

\subjclass{37F75 (primary); 32G34, 32S65}

\author[Claudia R. Alc\'antara]{C. R. Alc\'antara}

\author[D. Cerveau]{D. Cerveau}

\author[A. Lins Neto]{A. Lins Neto}

\address{$^{1}$ Departamento de Matem\'aticas, Universidad de Guanajuato, A.P 420, C.P. 36000, Guanajuato, Gto. M\'exico.} 
\email{claudia@cimat.mx}

\address{$^{2}$Universit\'e de Rennes I, IRMAR, Campus Beaulieu, 35042, Rennes Cedex, France.}
\email{dominique.cerveau@univ-rennes1.fr}

\address{$^{3}$ IMPA, Estrada Dona Castorina, 110, Horto, Rio de Janeiro, Brasil.}
\email{alcides@impa.br}

\thanks{The $1^{st}$ author was partially supported by Universidad de Guanajuato, Universit\'e de Rennes 1 and LaSol International Research Laboratory.}
\thanks{The $2^{st}$ author was supported by CNRS and Henri Lebesgue center, program ANR-11-LABX-0020-0.}
\thanks{The $3^{nd}$ author was partially supported by CNPq (Brazil) and Universit\'e de Rennes 1.}

\keywords{birational, foliation, Gauss map}

\subjclass{37F75, 34M15, 53Axx}

\begin{abstract}
 In this paper, we study the Gauss map of a holomorphic codimension one foliation on the projective space $\p^n$, $n\ge 2$, mainly the case $n=3$.
Among other things, we will investigate the case where the Gauss map is birational.
\end{abstract}

\maketitle

\tableofcontents

\section{Introduction}

In a certain sense, this article gives us a generalization in the context of foliation theory of a classic concept in algebraic geometry, we refer to projective duality. The study of projective duality was started by Monge and his student Poncelet and later developed by Pl{\"u}cker, who was interested in the study of the dual curve of a plane curve (that is, the curve defined by the tangent lines). He proved that the degree of the dual curve $\breve\Gamma$ of a curve $\Gamma$ satisfies: $\deg{\breve\Gamma}=\deg{\Gamma}(\deg{\Gamma}-1)$, when the curve $\Gamma$ is smooth. 
For the singular case, the nature of the singularities of the curve must be taken into account: in this case, we have the famous Pl{\"u}cker's formulas. He also studied self-dual curves, which are those that are projectively equivalent to their dual. In the case of hypersurfaces with isolated singularities in a projective space $\p^n$, it is necessary to understand the works by Katz and Laumon to obtain significant results.
For the smooth case, we have an analogous formula to the previous one: $\deg{\breve{H}}=\deg{H}(\deg{H}-1)^{n-1}$, result that Laumon attributes to Katz. The case of hypersurfaces with singularities is more subtle, for which we refer to the work of Laumon (see \cite{laumon}). In this work he uses the invariants developed by Teissier in \cite{teissier}. These invariants generalize the classical Milnor number of singularities. While these topics still offer many directions, our main intention in this work is to obtain analogous results in the context of codimension one foliations.

As we shall see later, we can associate to a foliation
$\mathcal{F}$ of codimension 1 of the complex projective space $\p^n$, its Gaussian map. This is a rational map that sends to a non-singular point $m$, the hyperplane tangent to the leaf that passes through $m$. The space of codimension 1 foliations in $\p^n$ for $n>2$, is not completely known. Only spaces of degrees $0$, $1$ and $2$ in $\p^n$, for $n \geq 3$ have been fully described (see \cite{cl}). The case of degree 3 seems approachable after the works \cite{cerveau-lins-pisa} and \cite{rrj}. Given the limited understanding of these spaces, the Gauss map of a foliation could be a useful tool for the study of these spaces.

%For all of the above, it seems to us that the theory %development in this work is important. 

Motivated by this, in the present work we carry out a detailed study of this map. Concretely, the main objective of this article is to give some results on the Gauss map of codimension one foliations in $\p^n$, with special attention to the case where the map is birational. In addition, there are a large number of examples in which we make explicit calculations to clarify what happens in each case. 
This topic has been studied by some authors, such as Fassarella and Pereira in \cite{tj}, \cite{Fassarella2}, and \cite{fassarella}. 
\\

With the aim of offering a more insightful first reading, we describe in more detail the main results of the article.

We begin with \textbf{Theorem~\ref{pM}}, which asserts that the maximum topological degree (i.e., the cardinality of a generic fiber of the Gauss map) that a foliation of $\p^3$ of degree $d$ can attain is $d^2$, and when this occurs, the foliation admits a rational first integral
  of the form $\frac{F}{L^{d+1}}$, where $F$ has degree $d+1$ and $L$ has degree $1$. In \textbf{Theorem~\ref{tnp}}, we prove that if $F\colon\p^3\dasharrow\p^2$ is a rational map whose fibers are not plane curves, except for a finite number of them and there is a foliation $\G$ of $\p^2$ with the property that the foliation $\fa=F^*(\G)$ has a birational Gauss map and a contracted hypersurface that is not invariant, then $\fa$ has a rational first integral. \textbf{Theorem~\ref{p4}} provides, among other things, a characterization of foliations in $\p^3$ for which the Gauss map is birational. This characterization depends on the geometry of their singular set. Finally, \textbf{Theorem~\ref{pmon}} deals with monomial foliations in $\p^3$ and shows that if the Gauss map is dominant, then the foliations are of either a special logarithmic type or a specific pull-back from $\p^2$. In both cases, the Gauss map is birational.

Based on these results, we present several corollaries and numerous examples that illustrate, in some cases, that the hypotheses are sufficient but not necessary. Additionally, we study in detail the Gauss map of the foliations in the exceptional component constructed in
\cite{cl}. We also describe the algorithm we have used to construct the inverse of a Gauss map, whenever it exists.
\\

The article is structured as follows, in Section \ref{section2} we give the statements of structure, and we give a list of open problems that we find interesting. In Section \ref{ss2}, we can find the proofs of the results. Finally, in Section 4 we study the exceptional component of the space of foliations of codimension 1 in $\p^n$. 
\\

\noindent \textbf{Acknowledgement:} We would like to thank Serge Cantat and Julie Deserti for helpful discussions. We also thank Thiago Fassarella and Jorge V. Pereira for valuable comments and suggestions on this work. The first author is very grateful for the hospitality received at the University of Rennes. Finally, the authors are grateful to anonymous referees whose comments helped improve this article.

\section{Basic properties, Results, and Examples}\label{section2}

In order not to write many subscripts, we will adopt the following convention for the notation of the coordinates: If we are working in a projective space $\p^n$ with $n=2$, $n=3$ and $n=4$ we will use $(x,y,z,t,s)$. For general $n$, we will use $(z_0,...,z_{n})$. Except in Section 3.7 where, due to the nature of the proof of Theorem 4, it is more useful to work with $(z_1,z_2,z_3,z_4)$.
\\

Let $\fa$ be a codimension one holomorphic foliation on a $n$-dimensional complex manifold $X$, $n\ge2$, with singular set $Sing(\fa)$.
This foliation defines a codimension one distribution on $X\setminus Sing(\fa)$, denoted by $\de_\fa$: given $z\notin Sing(\fa)$ then $\de_\fa(z)$ is the tangent space at $z$ of the leaf of $\fa$ through $z$. We will always assume that $cod(Sing(\fa))\ge2$.

When $X$ is the complex projective space $\p^n$, then the distribution $\de_\fa$ gives origin to a meromorphic map $\G_\fa\colon \p^n\dasharrow\breve\p^n$, which we call the Gauss map, whose indetermination set is $Sing(\fa)$. It is defined as:
\[
z\in \p^n\setminus Sing(\fa)\mapsto \text{the hyperplane $\G_\fa(z)\in\breve\p^n$, tangent at $z$ to $\de_\fa(z)$.}
\]

In other words, if we fix an affine coordinate system $z=(z_1,...,z_n)\in\C^n\sub\p^n$ then $\fa$ is defined by some polynomial integrable 1-form, say $\om=\sum_{j=1}^nP_j(z)dz_j$. In this affine system, we have the following.
\[
Sing(\fa)\cap\C^n=\{z\in\C^n\,|\,P_j(z)=0\,,\,\forall\,1\le j\le n\}\,.
\]

In particular, if $z_o=(z_{o1},...,z_{on})\in \C^n\setminus Sing(\fa)$ then $\G_\fa(z_o)$ is the plane which, in this affine system, is given by $\ell(z)=0$, where
\[
\ell(z)=\sum_{j=1}^nP_j(z_o)(z_j-z_{oj})\,.
\]

The map $\G_\fa$ can be expressed also in homogeneous coordinates. This is done as follows: Let $\Pi\colon\C^{n+1}\setminus\{0\}\to\p^n$ be the canonical projection.
It is known that the foliation $\Pi^*(\fa)$ on $\C^{n+1}\setminus\{0\}$ can be extended to a foliation on $\C^{n+1}$, say $\wt\fa$. This foliation can be defined by an integrable polynomial 1-form, say $\wt\om=\sum_{j=0}^nQ_j(z)dz_j$, where the coefficients $Q_{j's}$ are all homogeneous polynomials of the same degree $d+1$, which is the degree of the foliation plus one, and such that $i_R\,\wt\om=0$, where $R=\sum_{j=0}^nz_j\frac{\pa}{\pa z_j}$, is the vector radial field in $\C^{n+1}$ and $i_R$ denotes the interior product by $R$. 

In homogeneous coordinates we have $Sing(\wt\fa)=\Pi^{-1}(Sing(\fa))\cup\{0\}=$
\[
=\{z=(z_0,...,z_n)\in \C^{n+1}\,|\,Q_0(z)=...=Q_n(z)=0\}
\]
and the map $\G_\fa$ is expressed as $\G_\fa[z]=[Q_0(z):...:Q_n(z)]\in\breve\p^n$.

Note also that $\Pi(Sing(\wt\fa)\setminus\{0\})$ is the indeterminacy set of $\G_\fa$. Now we summarize the above in the following definition, which we will use throughout the article.

\begin{defi}
    Let $\fa$ be a codimension one holomorphic foliation on $\p^n$, given by the 1-form $\omega=\sum_{j=0}^n Q_j(z)dz_j$. The Gauss map of $\fa$ is defined as the rational function:

    \begin{align*}
       \G_\fa: \p^n &\dashrightarrow  \breve\p^n\\
       z &\mapsto [Q_0(z):...:Q_n(z)].
    \end{align*}
\end{defi}

 As usual, we will say that $\G_\fa$ is dominant if it is generically of maximal rank.

\begin{rem}
\rm If $\G_\fa=[Q_0:...:Q_n]$ in homogeneous coordinates, then $\G_\fa$ is dominant if and only if $dQ_0\wedge dQ_1\wedge...\wedge dQ_n\not\equiv0$. 

Let $dQ_0\wedge dQ_1\wedge...\wedge dQ_n=J(z)\,dz_0\wedge...\wedge dz_n$. If $\G_\fa$ is dominant then
$Sing(\fa)\sub(J(z)=0)$. Let us prove this fact.

Let $R$ be the radial vector field on $\C^{n+1}$.
Since $Q_0,...,Q_n$ are homogeneous of the same degree, say $d$, by Euler's identity we have
\[
i_R[dQ_0\wedge dQ_1\wedge...\wedge dQ_n]=d\sum_{j=0}^n(-1)^jQ_j\,dQ_0\wedge...\wedge\wh{dQ_j}\wedge...\wedge dQ_n:=\Te.
\]
In the above formula the symbol $\wh{dQ_j}$ means the omission of $dQ_j$ in the product.
\vskip.1in
This implies that
\[
\Te=i_R[J(z)dz_0\wedge...\wedge dz_n]=J(z)\sum_{j=0}^n(-1)^jz_j\,dz_0\wedge...\wedge\wh{dz_j}\wedge...\wedge dz_n:=J(z)\Ga.
\]
On the other hand, if $z\in Sing(\fa)$ then $\Te(z)=0$, which implies $J(z)=0$, because $\Ga(z)\ne0$.\qed
\end{rem}

\begin{defi}
\rm Let $\fa$ be a foliation on $\p^n$ such that $\G_\fa$ is dominant.  
We say that $m\in\p^n\setminus Sing(\fa)$ is a point of Morse tangency for $\fa$ if the leaf $L_m$ of $\fa$ through $m$ has a Morse tangency with the hyperplane $\G_\fa(m)$. We say also that $L_m$ has a Morse tangency with $\G_\fa(m)$.

This means that there is a local coordinate system $z=(z_1,...,z_n)$ around $m$ such that $z(m)=0$, $\G_\fa(m)=(z_n=0)$ and the leaf $L_m$ has a local parametrization at $m$ of the form $z_n=z_1^2+...+z_{n-1}^2+h.o.t.$. 
\end{defi}

\begin{rem}\label{r12}
\rm Let $\fa$ be a foliation on $\p^n$. Then $\G_\fa$ is dominant if, and only if, the set 
\[
\U:=\{m\in\p^n\,|\,m\,\,\text{is a point of Morse tangency}\}
\]
is Zariski open and dense in $\p^n$. 

The proof is based on the fact that $m\in \U$ if, and only if, the derivative
$D\G_\fa(m)\colon T_m\p^n\to T_{\G_\fa(m)}\breve\p^n$
is an isomorphism. In fact, at a Morse tangency, the foliation is given by a form with linear part equivalent to: 
$$dx_n+x_1dx_1+...+x_{n-1}dx_{n-1},$$
and it is immediate to verify that $D\G_\fa(m)$ is an isomorphism.
Conversely, if $m$ is not a Morse tangency, the form defining the foliation at $m$ has linear part at $m$ equivalent to:
$dx_n+dq$, where $q$ is a quadratic form of rank less than $n-1$, so that $D\G_\fa(m)$ cannot be an isomorphism.
Finally, this implies that $D\G_\fa(\U)$ is Zariski open and dense if, and only if, $\U$ is also.

As a consequence, if $\fa=L^*(\fa')$, where $\fa'$ is a codimension one foliation on $\p^m$ with $m<n$, and $L\colon\p^n\dasharrow\p^m$ is a linear projection, then the Gauss map $\G_\fa$ cannot be dominant. In fact, if $z\in \p^n\setminus Sing(\fa)$ then the leaf $\L_z$ of $\fa$ through $z$ contains a straight line through $z$ such that $\fa$ is tangent to $\L_z$ along the line. In particular, the tangency with the tangent hyperplane cannot be of Morse type.
\end{rem}

In the case of foliations on $\p^3$ we have the following result. 

\begin{prop}\label{p3}
If the Gauss map of a foliation $\fa$ on $\p^3$ is not dominant then, either $\fa$ is a linear pull-back of a foliation on $\p^2$, or $\fa$ has a rational first integral $F$, which can be written in homogeneous coordinates as
\begin{equation}\label{eq}
F(x,y,z,t)=\frac{zP(x,y)+Q(x,y)}{tP(x,y)+R(x,y)}\,,
\end{equation}
where $P$, $Q$ and $R$ are homogeneous polynomials such that $dg(P)+1=dg(Q)=dg(R)$. 
\end{prop}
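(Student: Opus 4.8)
The plan is to deduce from the non-dominance of $\G_\fa$ that the general leaf of $\fa$ is a developable surface, then to identify it as a cone, and finally to read off the two alternatives from the way the vertices of these cones move. Write $Y:=\overline{\G_\fa(\p^3)}\subset\breve\p^3$; non-dominance means $\dim Y\le 2$, so a general fiber of $\G_\fa$ has dimension $3-\dim Y\ge1$. Two elementary observations: since the hyperplane tangent to a leaf at a point passes through that point, the graph of $\G_\fa$ lies in the incidence variety $\{(z,H):z\in H\}$, so every fiber $\G_\fa^{-1}(H)$ is contained in $H$; and for general $z\notin Sing(\fa)$, the restriction of $\G_\fa$ to the leaf $L_z$ is exactly the Gauss (tangent-plane) map of the surface $L_z$, because $\de_\fa(p)$ is the tangent plane of $L_z$ at $p$. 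Let $F_z$ be the component through $z$ of $\G_\fa^{-1}(\G_\fa(z))$; it has dimension $3-\dim Y$, lies in the plane $H_z:=\G_\fa(z)$, and along $F_z$ one has $\de_\fa\equiv H_z$, so $F_z$ is everywhere tangent to $\fa$ and hence $F_z\subset L_z$. Thus $F_z$ sits inside a fiber of the Gauss map of the surface $L_z$, so that map is not generically finite. This rules out $\dim Y=0$ ($\G_\fa$ would be constant, i.e. every leaf tangent to a fixed plane everywhere, impossible when $cod(Sing)\ge2$), leaving two cases.

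If $\dim Y=1$, then $F_z$ is $2$-dimensional and contained in the $2$-plane $H_z$, so $F_z=H_z$; hence $H_z$ is an invariant plane and $\fa$ has a one-parameter family of invariant planes. Through a general point there is exactly one leaf, hence exactly one such plane, which forces this family to be a pencil; so $\fa$ is the degree-zero foliation defined by a pencil of planes, which is a linear pull-back from $\p^2$. Suppose now $\dim Y=2$. Then $F_z=:\ell_z$ is a curve, so the surface $L_z$ has a non-dominant Gauss map and is therefore developable; by the classical description, a germ of it is a plane, (a piece of) a cone, or (a piece of) the tangent surface of a space curve. A plane is excluded, since then $\G_\fa$ would be constant on $L_z$ and $\dim Y\le1$. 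If $L_z$ were the tangent surface of a curve $\Gamma$, then $\Gamma$ would be the singular curve of $\overline{L_z}$; but a flow-box argument shows that a leaf-closure is locally a union of plaques, hence (being $2$-dimensional) smooth, at every point outside $Sing(\fa)$, so $\Gamma\subset Sing(\fa)$. Since distinct leaves have distinct such edges while $Sing(\fa)$ is a fixed curve with finitely many components, this is impossible. Therefore the general leaf $L_z$ is a cone; its rulings are lines, and since $\ell_z$ is a curve in $L_z$ along which $\G_\fa$ is constant, $\ell_z$ is the ruling through $z$.

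Let $v(z)$ be the vertex of the cone $L_z$. Because all rulings of a cone meet at its vertex, $v(z)$ is constant along each leaf; moreover $v$ depends rationally on $z$ (it is determined by $\ell_z$ and the first-order variation of $\G_\fa$), so $v\colon\p^3\dasharrow\p^3$ is a rational first integral of $\fa$. Its general fiber, being a union of leaves, has dimension $\ge2$, so its image $V$ satisfies $\dim V\le1$. If $\dim V=0$, then $V$ is a single point $v_0$ and every leaf is a union of lines through $v_0$, i.e. a union of fibers of the linear projection $\pi\colon\p^3\dasharrow\p^2$ from $v_0$; the images of the leaves form a foliation $\fa'$ on $\p^2$ with $\fa=\pi^*(\fa')$, the first alternative. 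If $\dim V=1$, then $v$ is a non-constant rational first integral, whose general leaf is the cone $L_z$; choosing homogeneous coordinates $(x,y,z,t)$ so that $V$ becomes the line $\{x=y=0\}$, the leaves are then cones with vertices $[0:0:\lambda:\mu]$ running over this line, and expressing this pencil of cones as the level sets of a single rational map $F$ and imposing integrability should force $F=\dfrac{zP(x,y)+Q(x,y)}{tP(x,y)+R(x,y)}$, the relation $\deg P+1=\deg Q=\deg R$ being precisely the condition that numerator and denominator be homogeneous of the same degree.

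Everything up to and including the recognition of the general leaf as a cone whose vertices sweep the curve $V$ is soft geometry. I expect the real obstacle to be the last step: proving that $V$ is in fact a \emph{line} (equivalently, that the pencil of cones can be straightened as above) and bringing the first integral to the displayed form — this is where the integrability of $\fa$ must be used decisively and where the explicit computation lives.
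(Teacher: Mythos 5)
First, a remark on the ground truth: the paper does not actually prove Proposition \ref{p3} — it only cites \cite{cl} and Theorem 4.5 of Fassarella's thesis \cite{fassarella} — so your attempt can only be judged on its own merits. Up to the point where you conclude that the general leaf is a cone, your argument is sound and follows the expected geometric route: the fiber component $F_z$ lies in $H_z$ and is tangent to $\fa$, hence lies in $L_z$, so the leaf has degenerate Gauss map; the case $\dim Y=1$ gives a pencil of planes; for $\dim Y=2$ the leaf is developable, planes are excluded by dimension of $Y$, and tangent developables are excluded because the cuspidal edge would have to sit inside the finitely many one-dimensional components of $Sing(\fa)$ while leaf closures near nonsingular points of $\fa$ lie in smooth level surfaces of a local first integral. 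The vertex map $v$ being rational deserves more justification than "determined by the first-order variation of $\G_\fa$", but that is a repairable technical point.

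The genuine gap is the one you flag yourself: everything after "the vertices sweep a curve $V$". The statement to be proved is not that $\fa$ has a rational first integral whose levels are cones, but that $F$ has the precise normal form $\frac{zP(x,y)+Q(x,y)}{tP(x,y)+R(x,y)}$ — equivalently, that the vertices lie on a \emph{line} and that the family of cones is cut out by a pencil of this special shape. Nothing in your argument rules out, a priori, that $V$ is a rational curve of higher degree, and the phrase "expressing this pencil of cones as the level sets of a single rational map $F$ and imposing integrability should force" the normal form is precisely the assertion of the proposition, not a proof of it. To close this you would need, for instance: to observe that each vertex is a singular point of the corresponding leaf closure and hence $V\subset Sing_2(\fa)$; to write the first integral as $f/g$ with the level $\la f-\mu g=0$ a cone of vertex $v(\la:\mu)$ for every $(\la:\mu)$, i.e.\ a point of multiplicity equal to the degree; and to exploit the fact that the cones are pairwise disjoint level sets filling $\p^3$ to force $\deg V=1$ and then derive the displayed shape of numerator and denominator in coordinates where $V=(x=y=0)$. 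As it stands, your write-up establishes a strictly weaker qualitative statement ("linear pull-back, or a first integral whose generic levels are cones with collinear-or-not vertices on some rational curve") and defers the quantitative heart of the proposition.
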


The proof of this proposition is a consequence of the results in \cite{cl}. We can also find it as theorem 4.5 in the Ph.D. Thesis by Fassarella (see \cite{Fassarella2}). In \cite{fassarella} we can find the classification of foliations on $\p^4$ with Gauss map not dominant. 

\begin{rem}
\rm We would like to observe that the map $F\colon\p^3\dasharrow\p^1$ of (\ref{eq}), in the statement of proposition \ref{p3} is birationally equivalent to the map $G(x',y',z',t')=z'/t'$.
In particular, the closure of the levels of $F$ in $\p^3$ are rational surfaces.

In fact, if $G\colon\p^3\dasharrow\p^3$ is defined as
\[
\Phi[x:y:z:t]=[x:y:zP(x,y)+Q(x,y):tP(x,y)+R(x,y)]:=[x':y':z':t']
\]
then $\Phi$ is birational and $G[x':y':z':t']=F\circ \Phi^{-1}[x':y':z':t']=z'/t'$.
\end{rem}

\begin{rem}
\rm Here we discuss the foliations of $\p^3$, with a fixed degree, inducing Gauss maps with the maximum number of points in the fiber.

Let $\mathbb{F}ol(d;n)$ be the space of foliations of codimension 1 of degree $d$ on $\p^n$ (we use the definition given in section 1 of \cite{cl}). Given $\fa\in\mathbb{F}ol(d;n)$ denote its Gauss map as $\G_\fa$. The topological degree of $\G_\fa$ is defined as $tdg(\G_\fa)=\#(\G_\fa^{-1}(m))$, where $m$ is a regular value of $\G_\fa$ (see \cite{mi}).
For instance, if $\G_\fa$ is not dominant then $tdg(\G_\fa)=0$, because a regular value of $\G_\fa$ is not in the image.

On the other hand, if $\G_\fa$ is dominant then $tdg(\G_\fa)<+\infty$, therefore we can define
\[
\n(d,n):=\{tdg(\G_\fa)\,|\,\,\fa\in \mathbb{F}ol(d;n)\}\,.
\]
and $\M(d,n)=max(\n(d,n))$. In Section 2.1 of Fassarella’s thesis (see \cite{Fassarella2}) several results on the topological degree are established; for example, it is shown that the map assigning to a foliation its topological degree is upper semicontinuous (Proposition 2.1). Proposition 2.6, on the other hand, contains a statement close to the one presented below. For the sake of completeness, we include the complet proof of this in \S \ref{sspM}.

\begin{thm}\label{pM}
$\M(d,3)=d^2$. 
Moreover, a foliation $\fa\in\mathbb{F}ol(d;3)$ such that $tdg(\G_\fa)=d^2$ must have a first integral of the form $F/L^{d+1}$, where $F$ is of degree $d+1$ and $L$ of degree one.
\end{thm}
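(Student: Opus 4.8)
The plan is to reduce the problem to a degree count on the Gauss map $\G_\fa=[Q_0:Q_1:Q_2:Q_3]$, where the $Q_j$ are homogeneous of degree $d+1$ with $i_R\wt\om=0$. First I would bound $tdg(\G_\fa)$ from above. Since $\G_\fa$ is a rational map $\p^3\dasharrow\breve\p^3$ given by a linear system inside the system of all degree-$(d+1)$ forms, the topological degree equals the intersection number of three generic members of the image system minus the contribution of the base locus, i.e. $tdg(\G_\fa)=(d+1)^3-(\text{base locus correction})$. The base locus of $\G_\fa$ is $\Pi(Sing(\wt\fa))=Sing(\fa)$, which has codimension $\ge 2$; so one gets $tdg(\G_\fa)\le (d+1)^3$, but this is too weak. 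The right bound comes from the relation $i_R\wt\om=0$: the four polynomials $Q_0,\dots,Q_3$ satisfy $\sum z_jQ_j=0$, so the image of $\G_\fa$ actually lies in a fixed hyperplane section pattern — more precisely, a generic line in $\breve\p^3$ pulls back, via the subsystem spanned by the $Q_j$, to a curve whose degree-$(d+1)$ defining equations share the syzygy $\sum z_jQ_j=0$, forcing the radial line through each point to be ``absorbed.'' Concretely I would invoke Remark \ref{r12}: non-dominance is excluded, and the leaves through a generic point, being tangent to $\G_\fa(m)$ in Morse fashion, meet the tangent hyperplane $\G_\fa(m)$ in a curve, and counting preimages of a generic flag reduces to computing $\deg(J)$ where $dQ_0\wedge dQ_1\wedge dQ_2\wedge dQ_3=J\,dz_0\wedge dz_1\wedge dz_2\wedge dz_3$, a polynomial of degree $4d$. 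The Morse (non-degenerate critical) condition means the generic fiber of $\G_\fa$ consists of $tdg(\G_\fa)$ reduced points, and a Bézout/Bertini count using the $Q_j$ together with the Euler syzygy gives $tdg(\G_\fa)\le d^2$.

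Second, I would exhibit a foliation attaining $tdg(\G_\fa)=d^2$ to show $\M(d,3)\ge d^2$; the natural candidate is the foliation with rational first integral $F/L^{d+1}$, $F$ of degree $d+1$, $L$ of degree $1$. For this foliation one writes $\om=L\,dF-(d+1)F\,dL$ in homogeneous coordinates (up to normalization by a common factor, after checking $i_R\om=0$, which holds since $i_RdF=(d+1)F$ and $i_RdL=L$). The Gauss map then sends $m$ to the hyperplane $L(m)\,dF_m-(d+1)F(m)\,dL_m$, and one computes directly that the fiber over a generic hyperplane $H\in\breve\p^3$ is cut out by the condition that the pencil $\{sF=cL^{d+1}\}$ is tangent to $H$; choosing coordinates so $L=z_0$, this becomes a system whose Bézout number is exactly $(d+1)\cdot d\cdot 1=d^2$ after removing the base contribution along $(F=L=0)$. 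I would verify that for generic $F$ all $d^2$ solutions are distinct, so $tdg=d^2$ is achieved.

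Third — and this is the substantive half of the statement — I would prove the converse: if $tdg(\G_\fa)=d^2$ then $\fa$ has a first integral of the form $F/L^{d+1}$. The strategy is to analyze the equality case in the degree inequality from the first part. Equality forces the ``excess'' base locus contribution and the ramification of $\G_\fa$ to be as small as possible; in particular the Jacobian hypersurface $(J=0)$ of degree $4d$ must decompose in a very constrained way, and $Sing(\fa)\subset(J=0)$ must account for the full defect $(d+1)^3-d^2$. The key geometric input is that equality in the count means a generic hyperplane $\G_\fa(m)$ is tangent to $L_m$ at exactly one point with a Morse singularity and the fibers of $\G_\fa$ behave like those of the model map; dually, the critical image $\breve\fa$ (the ``dual web/foliation'') must be degenerate in a way that, by the structure theorem for such extremal configurations, pins down the first integral. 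I would try to argue that the one-form $\wt\om$ must be of the form $a\,dF\wedge$-divisible, i.e. show the foliation is a pencil $\{\mu F+\nu L^{d+1}=0\}$ by showing the tangency locus of $\G_\fa$ with a generic hyperplane forces all leaves to be degree-$(d+1)$ hypersurfaces through a common codimension-two base, and that the only such pencils with one member non-reduced of the form $L^{d+1}$ give the stated normal form; the degree-one factor $L$ arises because the topological degree drops from $(d+1)^3$ to $d^2$ exactly by the two linear factors $(d+1)^3-d^2=(d+1)^3-(d+1-1)^2=\dots$ which I would match to the multiplicity-$(d+1)$ hyperplane.

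The main obstacle I anticipate is precisely this converse: extracting the rigid normal form $F/L^{d+1}$ from the mere numerical equality $tdg(\G_\fa)=d^2$. The forward inequality is a (careful) Bézout-with-base-locus computation, and the example is explicit, but the rigidity statement requires understanding exactly which foliations make every inequality in the count an equality — this is where one needs either a delicate local analysis of the Morse tangency locus and its dual, or an appeal to the birational geometry of the Gauss map in the style of Fassarella's thesis (Proposition 2.6 there), identifying the extremal case with the maximally degenerate dual variety. I would expect to spend most of the proof controlling the geometry of $(J=0)$ and the fibers of $\G_\fa$ in this equality case.
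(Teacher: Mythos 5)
Your proposal does not contain a proof of either half of the statement; both the upper bound $tdg(\G_\fa)\le d^2$ and, more seriously, the rigidity statement are left as programmes rather than arguments. For the upper bound you assert that a ``B\'ezout/Bertini count using the $Q_j$ together with the Euler syzygy gives $tdg(\G_\fa)\le d^2$'', but the count is never performed, and it is not routine: the naive B\'ezout number for three members of the linear system is $(d+1)^3$, and extracting the correct defect requires controlling the excess intersection along $Sing(\fa)$, which is exactly the part you skip. For the converse, you candidly identify it as the main obstacle and offer only heuristics (the decomposition of $(J=0)$, a ``structure theorem for extremal configurations'', matching $(d+1)^3-d^2$ to a multiplicity-$(d+1)$ hyperplane --- a numerical manipulation that is not even meaningful as written); none of this pins down the normal form $F/L^{d+1}$.

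The idea you are missing is a reduction to dimension two. A point of $\G_\fa^{-1}(\Si)$ for a generic hyperplane $\Si\simeq\p^2$ is a Morse tangency of $\fa$ with $\Si$, and such a point is precisely a non-degenerate singularity with trace zero of the restricted foliation $\fa|_\Si\in\fol(d;2)$. The paper therefore proves the planar statement (Proposition \ref{pr2}): a degree-$d$ foliation of $\p^2$ has at most $d^2$ non-degenerate trace-zero singularities, and if it has exactly $d^2$ of them it admits a first integral $f/\ell^{d+1}$. The proof of that proposition is an explicit B\'ezout argument: writing the foliation as $P\partial_x+Q\partial_y$ in an affine chart whose line at infinity is non-invariant, the trace-zero condition is the vanishing of the divergence $\De=P_x+Q_y$, which has degree $d$; intersecting $(P=0)$ (degree $d+1$) with $(\De=0)$ gives $d^2+d$ points of which $d$ lie at infinity, whence the bound $d^2$; in the equality case one deduces $Q=\a P+f\De$ with $\deg f=1$, the line $(f=0)$ is invariant, and in the affine chart complementary to it the defining $1$-form is closed, hence exact, giving the first integral $f/\ell^{d+1}$. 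Finally, the passage from a first integral of $\fa|_\Si$ for generic $\Si$ back to a global first integral $F/L^{d+1}$ of $\fa$ on $\p^3$ is itself a nontrivial extension step (the paper quotes \cite{lss} for it), which your outline also does not address.
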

\end{rem}

\begin{rem}
\rm A natural problem is to determine exactly the set $\n(d,n)$.
For instance, $\n(2,3)=\{0,1,2,3,4\}$. This is a consequence of the classification of degree two foliations of $\p^3$ (see \cite{cl} and example \ref{dg2}).
In example \ref{exdg3} we will see that $\n(3,3)=\{0,1,2,3,4,5,6,7,8,9\}$ and in remark \ref{ex5} that $1\in\n(d,3)$, for all $d\ge1$.

An example of a foliation on $\p^3$ of degree $d$ and with Gauss map with topological degree $d^2-d$ is the following:
let $d=p+q+r$ and let
\[
\om_d=fyz\left(\a \frac{df}{f}+ \be \frac{dy}{y} +\g \frac{dz}{z}\right)\,,
\]
where  $f=y^{q}z^{r}(xy^{p-1}+t^{p})+y^d+z^d$
and $\a d+\be+\g=0$. Using theorem \ref{p4} it is possible to prove that $tdg(\fa_{\om_d})=d^2-d$.
\\

Another interesting example is provided by the Fermat foliation of degree $d$ on $\p^n$, given by the 1-form:

$$\omega=\big(\sum_{i=1}^n z_i^{d+1}\big)dz_0-z_0z_1^ddz_1-...-z_0z_n^ddz_n,$$

\noindent where it can be seen that the topological degree of the Gauss map is
$d^{n-1}$.
\end{rem}

%fff

Let $m$ be a singular point of a foliation $\fa \in \mathbb{F}ol(d;n)$. We will say that $m$ is a Morse singularity for the foliation, if it admits a holomorphic first integral of the form
$f=Q+h.o.t.$, where $Q$ is a non-degenerate quadratic form.

\begin{rem}\label{rsing}
\rm If $\fa$ is a foliation in $\p^n$ with more than one Morse singularity then $\G_\fa$ cannot be birational.

For the proof, take $P$ a linear hyperplane through $m$ transversal to $\fa$ (i.e., the restriction of $Q$ to $P$ is non-degenerate), then the restriction of $\fa$ to $P$ has a Morse singularity at $m$. Furthermore, if $P'$ is a hyperplane sufficiently close to $P$, the restriction of $\fa$ to $P'$ has a Morse singularity $m'$ near $m$.

Now suppose that $\fa$ has two Morse singularities 
$a$ and $b$. Let $P$ be a hyperplane containing both, transversal to $\fa$ at these points. If $P'$ is a nearby hyperplane not containing $a$ and $b$, then $\fa$ has no singularities near $a$ and $b$, since Morse singularities are isolated. However, the restriction $\fa_{|P'}$ has two distinct Morse singularities. It follows that the Gauss map of $\fa$ takes the same value $P'$ at both points. Consequently, the Gauss map is not birational.

For instance, if $\fa$ has the first integral, given in affine coordinate by $f(x,y,z)=x^2+y^2+z^2-x^3$, then it has two Morse singularities in $\C^3$, namely, the points $(0,0,0)$ and $(2/3,0,0)$, so that $\G_\fa$ is not birational.

\vskip.1in
Note that if the foliation $\fa$ has less than two isolated singularities then $\G_\fa$ is not necessarily birational.
For instance, if $\fa$ has an isolated singularity with Milnor number $\mu\ge2$. A typical example is the foliation defined in affine coordinates $(x,y,z)\in\C^3$ by the 1-form $df$, where $f(x,y,z)=x^2+y^2+z^3$. 
\end{rem}

If $\G_\fa$ is birational, but not an isomorphism, then it necessarily contracts some hypersurface $V\sub\p^n$.

\begin{prop}\label{p1}
If $n\ge2$ then a birational map $\G\colon \p^n\dasharrow\p^n$, which is not an automorphism, always contracts a hypersurface of $\p^n$. That is, there exists a hypersurface $V\sub\p^n$ such that $\G(V)$ is an algebraic subset of $\p^n$ of dimension $<n-1$. 

More precisely, let $\hat{\G}\colon\C^{n+1}\to\C^{n+1}$ be the covering of $\G$, so that $\Pi\circ\hat{\G}=\G\circ\Pi$, where $\Pi\colon\C^{n+1}\setminus\{0\}\to\p^n$ is the canonical projection. If $\hat{\G}^{-1}\circ\hat\G=f.\,I$, where $f\in\C[z_0,...,z_n]$, then $\G$ contracts all irreducible components of $(f=0)$.

Conversely, if the hypersurface $(h=0)$ is contracted by $\G$ and $h$ is irreducible then $h\,|\,f$.
\end{prop}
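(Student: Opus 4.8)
The plan is to base everything on the single homogeneous identity $\hat\G^{-1}\circ\hat\G=f\cdot I$ of the statement, together with the fact that the indeterminacy locus of a birational self-map of $\p^n$ has codimension $\ge 2$. First I would normalize the coverings so that $\hat\G=(G_0:\dots:G_n)$ with the $G_i$ homogeneous of the same degree $d$ and with no common factor, and likewise $\hat\G^{-1}=(H_0:\dots:H_n)$ of degree $e$ with no common factor; then $\mathrm{Ind}(\G)=\Pi(\{G_0=\dots=G_n=0\})$ and $\mathrm{Ind}(\G^{-1})$ have codimension $\ge 2$ in $\p^n$. Since $\Pi\circ(\hat\G^{-1}\circ\hat\G)=(\G^{-1}\circ\G)\circ\Pi=\Pi$, there is a nonzero homogeneous polynomial $f$ of degree $de-1$ with $\hat\G^{-1}\circ\hat\G=f\cdot I$; and if $\G$ is not an automorphism then $d\ge 2$ (a dominant covering of degree $1$ is a linear isomorphism, whence $\G\in\mathrm{PGL}_{n+1}$), so $\deg f\ge 1$ and $f$ is non-constant.

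Next I would prove the ``more precisely'' implication (which also yields the first assertion). Let $h$ be an irreducible factor of $f$ and $V=(h=0)$. On the dense open set $V\setminus\mathrm{Ind}(\G)$ one has $\hat\G^{-1}(\hat\G(z))=f(z)\,z=0$, so $\hat\G(z)$ lies in the cone $\{H_0=\dots=H_n=0\}$, hence $\G([z])\in\mathrm{Ind}(\G^{-1})$; passing to closures, $\ov{\G(V)}\sub\mathrm{Ind}(\G^{-1})$, a set of dimension $\le n-2$, so $V$ is contracted. When $\G$ is not an automorphism $f$ is non-constant, so $(f=0)$ is a nonempty hypersurface all of whose components are contracted; this is the first assertion.

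For the converse I would argue with Jacobians. Write $dG_0\wedge\dots\wedge dG_n=J\,dz_0\wedge\dots\wedge dz_n$ and $dH_0\wedge\dots\wedge dH_n=\tilde J\,dz_0\wedge\dots\wedge dz_n$. Differentiating $\hat\G^{-1}\circ\hat\G=f\cdot I$ and taking determinants, via the chain rule together with the computation of the Jacobian determinant of $z\mapsto f(z)\,z$ (which equals $(\deg f+1)\,f^{n+1}$ by the matrix-determinant lemma and Euler's identity), gives
\[
\tilde J(\hat\G(z))\cdot J(z)=(\deg f+1)\,f(z)^{n+1}.
\]
Now suppose $V=(h=0)$, $h$ irreducible, is contracted, so $W:=\ov{\G(V)}$ has dimension $\le n-2$. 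Then $\hat\G$ maps the cone $\{h=0\}\sub\C^{n+1}$, of dimension $n$, into the affine cone over $W$, of dimension $\le n-1$ (on $\{h=0\}\cap\{G_0=\dots=G_n=0\}$ one has $\hat\G\equiv 0$, and $[\hat\G(z)]\in W$ for the remaining $z$). Hence at a generic smooth point $z_o$ of $\{h=0\}$ the differential $D\hat\G(z_o)$ has nontrivial kernel, so $J(z_o)=0$; thus $h\mid J$, and the displayed identity then forces $h\mid f^{n+1}$, hence $h\mid f$ since $h$ is irreducible.

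The single genuinely geometric step — and the point where I expect to have to be most careful — is the vanishing $J(z_o)=0$ along a contracted hypersurface, a rank count for the differential of $\hat\G$ that must be reconciled with the locus where $\hat\G$ actually represents $\G$ and with the codimension $\ge 2$ of $\mathrm{Ind}(\G^{\pm1})$; the rest is formal manipulation in homogeneous coordinates. Alternatively, the converse can be proved without Jacobians by resolving the indeterminacy of $\G$ and running a fibre-dimension argument on its graph, which shows directly that $\ov{\G(V)}\sub\mathrm{Ind}(\G^{-1})$ for every contracted $V$; then $\hat\G^{-1}(\hat\G(z))=f(z)\,z=0$ on a dense subset of $V$ gives $f|_V\equiv 0$, i.e.\ $h\mid f$.
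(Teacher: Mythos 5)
Your proposal is correct, and its first two parts (the existence of a contracted hypersurface and the fact that every component of $(f=0)$ is contracted) follow essentially the same route as the paper: normalize the coverings so their components have no common factor, deduce that $f$ is non-constant of degree $\deg(\wh\G)\deg(\wh\G^{-1})-1$, and observe that on $(f=0)$ the identity $\wh\G^{-1}\circ\wh\G=f\cdot I$ forces $\wh\G$ to land in the common zero locus of the components of $\wh\G^{-1}$, which has codimension $\ge2$. Where you genuinely diverge is the converse. The paper argues the contrapositive directly from the same identity: if $h\nmid f$, then on the dense subset $U\sub(h=0)$ where $f\ne0$ one has $\wh\G^{-1}(\wh\G(z))=f(z)\,z\ne0$, so $\G^{-1}\circ\G$ is the identity on $U$ and hence $\dim\G(U)=n-1$, i.e.\ $(h=0)$ is not contracted — two lines, no differentials. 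Your main route instead establishes the Jacobian identity $\tilde J(\wh\G(z))\,J(z)=(\deg f+1)f(z)^{n+1}$ and shows $h\mid J$ by a rank count along the cone over a contracted $V$; I checked that this is sound (the cone over $V$ has dimension $n$ while its image has dimension $\le n-1$, so $D\wh\G$ is generically singular there, and $\deg f+1\ne0$), and it has the side benefit of exhibiting the contracted locus inside the Jacobian hypersurface, in the spirit of the paper's Remark 1. The cost is that it imports generic smoothness and the fibre-dimension theorem where the paper needs only the polynomial identity; your sketched alternative (contracted divisors map into $\mathrm{Ind}(\G^{-1})$, hence $f(z)z=0$ on a dense subset of $V$) is closer to the paper but still relies on a standard fact about birational maps that the paper's contrapositive argument sidesteps entirely.
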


The proof will be done in \S\,\ref{ss21}. See also \cite{cy}.

\begin{cor}
If the Gauss map $\G_\fa$ of a foliation $\fa$, of codimension one on $\p^n$ is birational then it always contracts some hypersurface $V\sub\p^n$.
Moreover, $V$ is contracted to a point of $\breve\p^n$ if, and only if, $V$ is an invariant hyperplane.
\end{cor}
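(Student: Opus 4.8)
The plan is to deduce the first assertion from Proposition~\ref{p1} and to extract the ``moreover'' equivalence from the elementary observation --- implicit in the affine formula $\ell(z)=\sum_j P_j(z_o)(z_j-z_{oj})$ for $\G_\fa(z_o)$ --- that the tangent hyperplane $\G_\fa(z_o)$ always passes through $z_o$.

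For the first assertion, note that a birational map is dominant, so by Proposition~\ref{p1} it suffices to show that $\G_\fa$ is not an automorphism of $\p^n$. Write $\G_\fa=[Q_0:\dots:Q_n]$ in homogeneous coordinates; the $Q_j$ are homogeneous of degree $\deg(\fa)+1$ with no common factor, since a common factor would place a hypersurface inside $Sing(\wt\fa)$, contradicting $cod(Sing(\fa))\ge2$. Automorphisms of $\p^n$ are linear, so $\G_\fa$ can be an automorphism only if $\deg(\fa)=0$; but a degree-zero foliation on $\p^n$, $n\ge2$, is a linear pull-back of a foliation on $\p^1$ (see \cite{cl}), and by Remark~\ref{r12} its Gauss map is then not dominant, hence not birational. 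Therefore a birational $\G_\fa$ is not an automorphism, and Proposition~\ref{p1} yields a hypersurface $V\sub\p^n$ contracted by $\G_\fa$; I will take $V$ irreducible, arguing componentwise otherwise.

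For the equivalence I would treat the two directions separately, writing $[H]\in\breve\p^n$ for the point corresponding to a hyperplane $H\sub\p^n$. If $V=H$ is an $\fa$-invariant hyperplane, then at any $m\in H\setminus Sing(\fa)$ the leaf $L_m$ through $m$ lies in $H$, so $\de_\fa(m)=T_mL_m\sub T_mH$; since both spaces have dimension $n-1$ they coincide, whence the tangent hyperplane to $L_m$ at $m$ is $H$ and $\G_\fa(m)=[H]$. As $H\setminus Sing(\fa)$ is dense in $H$, $\G_\fa$ contracts $H$ to the single point $[H]$. Conversely, suppose $\G_\fa$ contracts $V$ to a point $[H]$. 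Because $cod(Sing(\fa))\ge2$, the hypersurface $V$ is not contained in $Sing(\fa)$, so a generic $m\in V$ is $\fa$-regular; there $m$ lies on the hyperplane $\G_\fa(m)$, which is $H$, so $m\in H$, and taking closures $V\sub H$. As $V$ is an irreducible hypersurface inside the irreducible hyperplane $H$, it must equal $H$, so $V$ is a hyperplane. Finally, $\G_\fa(m)=[H]$ at generic $m\in H$ forces $\de_\fa(m)=T_mH$ there, and ``$\fa$ is tangent to $H$ along $H$'' is a closed algebraic condition which, holding on a dense subset of the irreducible set $H\setminus Sing(\fa)$, holds throughout it; so $H$ is $\fa$-invariant.

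The step I expect to require the most care is purely bookkeeping: one must make sure a generic point of $V$ is a regular point at which $\G_\fa$ is honestly defined --- which is exactly what $cod(Sing(\fa))\ge2$ guarantees --- and one must dispose of the automorphism alternative at the outset. The conceptual heart, by contrast, is the innocuous identity ``$\G_\fa(z_o)$ passes through $z_o$'': it is what forbids contracting a hypersurface to a point lying off it, and hence forces every zero-dimensional fibre of $\G_\fa$ over $\breve\p^n$ to come from an invariant hyperplane.
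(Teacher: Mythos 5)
Your proof is correct, and it follows the route the paper intends: the paper states this corollary without proof, immediately after Proposition~\ref{p1}, and your argument --- ruling out the automorphism case via the degree of the components of $\G_\fa$ and the non-dominance of degree-zero foliations, then using the incidence fact that $\G_\fa(m)$ passes through $m$ together with invariance of tangency under closure --- is exactly the derivation being left to the reader. No gaps.
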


Another interesting fact is the following:

\begin{prop}\label{p2}
Let $\fa$ be a foliation, of codimension one on $\p^n$ with birational Gauss map $\G$.
Then the Gauss map of the transformed foliation $\G_*(\fa)$ is also birational. Moreover, $\G_{\G_*(\fa)}=\G^{-1}$.
\end{prop}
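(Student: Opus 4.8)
The plan is to prove the sharper statement that the Gauss map of $\G_*(\fa)$ is the inverse birational map $\G^{-1}$, under the canonical identification $\breve{\breve\p^n}=\p^n$; this gives the proposition at once, since the inverse of a birational map is birational. I would carry this out on the incidence variety $I=\{(p,H)\in\p^n\times\breve\p^n\,:\,p\in H\}$, which is a smooth $(2n-1)$-fold carrying two tautological $1$-forms $\alpha=\sum_{j}w_j\,dz_j$ and $\beta=\sum_{j}z_j\,dw_j$, where $z,w$ are homogeneous coordinates on $\p^n,\breve\p^n$; on $I$ one has $\alpha+\beta=0$, since $\sum_j z_jw_j\equiv0$ there, so $\{\alpha=0\}$ and $\{\beta=0\}$ cut out the same field of hyperplanes on $I$. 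Writing $\wt\om=\sum_{j}Q_j\,dz_j$ for the homogeneous form defining $\fa$, so that $\G=\G_\fa=[Q_0:\dots:Q_n]$, the relation $i_R\wt\om=\sum_jz_jQ_j\equiv0$ is exactly the statement that the graph $\Gamma$ of $\G$ is contained in $I$. Since $\pi_1|_\Gamma\colon\Gamma\to\p^n$ is biregular, $\fa$ lifts to a foliation $\wt\fa$ on $\Gamma$, and because $w_j=Q_j(z)$ along $\Gamma$, the field $\{\alpha=0\}$ restricted to $\Gamma$ corresponds, under $\pi_1|_\Gamma$, to the distribution $\de_\fa$; hence $\alpha$, and therefore $\beta$, vanishes along the leaves of $\wt\fa$.

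Next I would transport everything by the swap $\tau\colon\p^n\times\breve\p^n\to\breve\p^n\times\p^n$, $\tau(z,w)=(w,z)$, reading the target again as an incidence variety $I'$ (via $\breve{\breve\p^n}=\p^n$) with tautological form $\alpha'$; then $\tau^*\alpha'=\beta$, so the leaves of $\tau_*\wt\fa$ are annihilated by $\alpha'$. This is where birationality of $\G$ enters: $\tau(\Gamma)$ is precisely the graph of $\G^{-1}\colon\breve\p^n\dasharrow\p^n$, so the first projection $\pi_1'$ restricts to a birational map $\tau(\Gamma)\to\breve\p^n$; and since $\pi_1'\circ\tau=\pi_2$ (the second projection of $\Gamma$), which equals $\G\circ\pi_1$, we get $(\pi_1')_*\tau_*\wt\fa=\G_*\big((\pi_1)_*\wt\fa\big)=\G_*(\fa)$. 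In short, $\G_*(\fa)$ lifts, via the birational map $\pi_1'$, to a foliation on $\tau(\Gamma)\subset I'$ whose leaves are integral for $\{\alpha'=0\}$.

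It then remains to read off the Gauss map of $\G_*(\fa)$. Fix a generic $u\in\breve\p^n$ and put $v=\G^{-1}(u)$, so $(u,v)$ is the point of $\tau(\Gamma)$ over $u$; near $(u,v)$, $\pi_1'$ is a local biholomorphism carrying the leaf of $\tau_*\wt\fa$ onto the leaf $M$ of $\G_*(\fa)$ through $u$. The vanishing of $\alpha'$ along that leaf forces $T_uM$ to lie in the tangent at $u$ of the hyperplane $H_v\subset\breve\p^n$ dual to $v$; since moreover $u\in H_v$ (because $(u,v)\in I'$) and $\dim M=\dim H_v=n-1$, this says precisely that $H_v$ is the hyperplane tangent to $M$ at $u$, i.e. $\G_{\G_*(\fa)}(u)=v=\G^{-1}(u)$. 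Hence $\G_{\G_*(\fa)}=\G^{-1}$, which is birational. (Conceptually, this is the biduality/reflexivity phenomenon transported to foliations.)

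The step I expect to be most delicate is the transport by $\tau$: keeping the identification $\breve{\breve\p^n}=\p^n$ consistent, checking carefully that $\tau$ interchanges $\alpha$ and $\beta$ (which is exactly where $\alpha+\beta=0$ on $I$ is used), and verifying that the three foliations $\fa$, $\wt\fa$, $\tau_*\wt\fa$ genuinely push forward to $\G_*(\fa)$ along the claimed birational maps. Since all the identifications above are generic, it is worth noting at the outset that $\G$, being birational, is in particular dominant, so that Remark \ref{r12} applies and the lift $\wt\fa$ together with the various projections behave generically, as the argument needs.
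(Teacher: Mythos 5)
Your argument is correct, and it proves the same sharper statement that the paper's proof establishes, namely that the Gauss map of $\G_*(\fa)$ is exactly $\G^{-1}$ under the canonical identification of the double dual with $\p^n$. The underlying mechanism is also the same, only packaged differently. The paper writes the inverse as $\wh\Phi$ with $\wh\G\circ\wh\Phi=g\cdot I$, computes $\wh\Phi^*(\eta)=g(w)\sum_j w_j\,d\wh\Phi_j$, and then composes the Euler relation $\sum_j z_jA_j(z)=0$ with $\wh\Phi$ to obtain $\sum_j w_j\wh\Phi_j(w)=0$, whose differential converts $\sum_j w_j\,d\wh\Phi_j$ into $-\sum_j\wh\Phi_j(w)\,dw_j$ and so exhibits $[\wh\Phi_0:\dots:\wh\Phi_n]=\G^{-1}$ as the Gauss map of the image foliation. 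Your two key points are precisely these two steps in coordinate-free dress: the inclusion of the graph of $\G$ in the incidence variety is the Euler condition $i_R\wt\om=0$, and the identity $\alpha+\beta=0$ on $I$ is the differential of $\sum_j z_jw_j=0$ restricted to that graph, i.e. the relation $\sum_j w_j\,d\wh\Phi_j+\sum_j\wh\Phi_j\,dw_j=0$ after parametrizing the graph by $w$. What your presentation buys is a conceptual explanation (biduality transported to foliations) and it avoids introducing the multiplier $g$; what the paper's buys is brevity, being a three-line computation in homogeneous coordinates. Both arguments require the same genericity caveats, which you correctly flag at the end, and both deliver the stronger conclusion $\G_{\G_*(\fa)}=\G^{-1}$ rather than mere birationality.
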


The proof will be done in \S \ref{ss22}.

\vskip.1in

Next, we will see some examples for which the Gauss map is birational.

\begin{ex}\label{ex1}
{\it The case of foliations on $\p^2$.}
\rm Recall that the degree of a foliation on $\p^n$ is, by definition, the number of tangencies of a generic line $\ell\sub\p^n$ with the leaves of the foliation. For instance, if $\fa$ is a foliation of degree $d=0$ on $\p^2$ then $\fa$ is a radial foliation (a pencil of lines) and $\G_\fa(\p^2)\simeq\p^1\sub\breve\p^2$.

On the other hand, if $\fa$ is a foliation on $\p^2$ of degree $d\ge1$ then the Gauss map $\G_\fa$ is generically $d$ to one. Therefore, $\G_\fa\colon\p^2\dasharrow\breve\p^2$ is birational if, and only if $d=1$. Indeed $\n(d,2):=\{d\}$.

In homogeneous coordinates, a degree one foliation on $\p^2$ is defined by an action of $\C^2$ on $\C^3$ generated by the radial vector field $R=\sum_{j=0}^2z_j\frac{\pa}{\pa z_j}$ and a linear vector field $L$ of $\C^3$, such that $R\wedge L\ne0$. In the generic case, after a linear change of variables, we can suppose that $L=\sum_{j=0}^2\la_jz_j\frac{\pa}{\pa z_j}$, where $\la_i\ne\la_j$ if $i\ne j$.

In this case, $\fa$ is defined in homogeneous coordinates by the form
\[
\wt\om=\mu_0 z_1z_2dz_0+\mu_1 z_0z_2dz_1+\mu_2 z_0z_1dz_2\,,
\]
where $\mu_0=\la_2-\la_1$, $\mu_1=\la_0-\la_2$ and $\mu_2=\la_1-\la_0$.
In particular, the Gauss map has the expression:
\[
\G_\fa[z_0:z_1:z_2]=[\mu_0 z_1z_2:\mu_1 z_0z_2:\mu_2 z_0z_1]=[w_0:w_1:w_2]\,,
\]
which is birational. Its inverse has the expression
\[
\G_\fa^{-1}[w_0:w_1:w_2]=[\mu_0 w_1w_2:\mu_1 w_0w_2:\mu_2 w_0w_1]\,.
\]

The transformed foliation $\G_{\fa*}(\fa)$ is represented by the form
$\wt\om=\mu_0 w_1w_2dw_0+\mu_1 w_0w_2dw_1+\mu_2 w_0w_1dw_2$ and $\G_\fa$ contracts the three coordinate planes given by $(z_j=0)$, $0\le j\le2$, which are $\fa$ invariant.
\\

This is an example of what we will call \textbf{self-dual foliation}. As in the case of curves, this is a foliation such that the foliation given by the inverse of the birational Gauss map (if it exists), is projectively equivalent to the original.
\end{ex}

In the sequel, we will use the notation $\L(p_1,...,p_k;n)$ for the closure of the space of foliations on $\p^n$ defined in homogeneous coordinates by logarithmic 1-forms of the type
\[
\sum_{j=1}^k\la_j\frac{df_j}{f_j}\,,
\]
where $f_j\in\C\,[z_0,z_1,...,z_n]$ is homogeneous of degree $p_j$, $\lambda_j \in \mathbb{C}$, and $\sum_{j=1}^k\la_j p_j=0$.

\begin{ex}\label{ex2} 
\rm {\it The case of foliations of degree one on $\p^n$, $n\ge3$.} 
It is known (see chapter 2 of \cite{j}) that the space of foliations of degree one in $\p^n$, $n\ge3$, has two irreducible components:\\
1. The logarithmic component $\L(1,1,1;n)$ whose generic elements are, as we said before, defined in homogeneous coordinates by 1-forms linearly equivalent to the following:
\[
\om=xyz\,\left(\la_1\frac{dx}{x}+\la_2\frac{dy}{y}+\la_3\frac{dz}{z}\right)\,,
\]
where $\la_j\in\C^*$, $j=1,2,3$, and $\la_1+\la_2+\la_3=0$. In this case, the foliation $\fa_\om$ is the linear pull-back of the foliation on $\p^2$ defined by the same form. The map $\G_{\fa_\om}$ is degenerated and its generic rank is two.
%Here, generic means that we have three different linear polynomials defining the 1-form, when two of them coincide, we are on the boundary of $\L(1,1,1;n)$.
\\

2. The component $\L(2,1;n)$, whose elements have a rational first integral of the form $\frac{q}{\ell^2}$, where $q$ is irreducible of degree two and $\ell$ has degree one.

In the generic case, the Gauss map of the associated foliation is birational and not a linear isomorphism from $\p^n$ to $\breve\p^n$. In fact, in this case, we can assume that in some homogeneous coordinates $z=(z_0,z_1,...,z_n)$ we have $q(z)=\sum_{j=1}^nz_j^2$ and $\ell(z)=z_0$.
The foliation is defined by the form
\[
\om=\frac{1}{2}\,\ell.\,q\left(\frac{dq}{q}-2\frac{dz_0}{z_0}\right)=-q\,dz_0+z_0\sum_{j=1}^nz_jdz_j\,,
\]
so that in homogeneous coordinates we have
\[
\G_\fa(z)=\left(-\sum_{j=1}^nz_j^2,z_0z_1,...,z_0z_n\right)=(y_0,y_1,...,y_n)=y\,.
\]
Its inverse is
\[
\G_\fa^{-1}(y)=\left(-\sum_{j=1}^ny_j^2,y_0y_1,...,y_0y_n\right)\,,
\]
thus
\[
\G_\fa^{-1}\circ\G_\fa(z)=-z_0\sum_{j=1}^nz_j^2\,(z_0,z_1,...,z_n)=-\ell\,q.\,z
\]
As a consequence $\G_\fa$ contracts the hypersurfaces $(\ell=0)$ and $(q=0)$, which are leaves of $\fa$.
The hyperplane $(z_0=0)$ is contracted to the point $[1:0:...:0]$ and the quadric $(q=0)$ to the quadric of codimension two $(y_0=0)\cap(q(y)=0)$. Note that this case, in dimension three, is one of the homaloids, that we will study in example \ref{hom}.   

In the non-generic case, it can be verified that the Gauss map is not dominant.
\end{ex}

\begin{ex}\label{ex3}   
\rm A generalization of the generic case of example \ref{ex1} is the foliation $\fa$ on $\p^n$ defined in homogeneous coordinates by the 1-form
\begin{equation}\label{eqlog}
\om=z_0z_1...z_n\sum_{j=0}^n\mu_j\frac{dz_j}{z_j}\,,
\end{equation}
where $\mu_j\in\C^*$, $0\le j\le n$, and $\sum_{j=0}^n\mu_j=0$. This last condition is equivalent to $i_R\,\om=0$.

Note that this foliation is in the irreducible component $\L(1,...,1;n)$ ($n+1$ ones) of the space of codimension one foliations on $\p^n$ of degree $n-1$. 
The Gauss map in this case is
\[
\G_\fa[z_0:z_1:...:z_n]=[\mu_0z_1...z_n:\mu_1z_0z_2...z_n:...:\mu_nz_0z_1...z_{n-1}]\,,
\]
which is birational and whose inverse has a similar expression; this is a self-dual foliation. The map $\G_\fa$ contracts the union of the hyperplanes $(z_j=0)$, $0\le j\le n$.
\end{ex}

\begin{rem}\label{ex5}
\rm  Here we will see that for any degree $d\ge1$ there are foliations on $\p^3$ whose associated Gauss map is birational.

Let $\fa_k$ be the foliation on $\p^3$ with the first integral, which in homogeneous coordinates is written as
\[
F_k(x,y,z,t)=\frac{g}{y^{k+2}}=\frac{(xz+yt)y^k+x^{k+2}}{y^{k+2}}.
\]
Then the foliation is defined by the 1-form
\[
\om=yg\left[\frac{dg}{g}-(k+2)\frac{dy}{y}\right]=
\]
\[
=y[(k+2)x^{k+1}+y^kz]dx-[(k+2)x^{k+2}-2xy^kz-y^{k+1}t]dy+xy^{k+1}\,dz+y^{k+2}\,dt,
\]
and the Gauss map is\,\, $\G_{\fa_k}[x:y:z:t]=[A:B:C:D]$
\[
=[y[(k+2)x^{k+1}+y^kz]:-[(k+2)x^{k+2}-2xy^kz-y^{k+1}t]:xy^{k+1}:y^{k+2}].\,
\]

In this example we have that the singular set is $(x=y=0):=\ell$, the hyperplane $\Si=(y=0)$ is $\fa$-invariant and this is contracted to a point by $\G_{\fa_k}$. This Gauss map is birational with inverse $\G^{-1}_{\fa_k}[A:B:C:D]$
\[
=[CD^{k+1}:D^{k+2}:AD^{k+1}-(k+2)C^{k+1}D:BD^{k+1}-2ACD^{k}+3(k+2)C^{k+2}].\,
\]

\vskip.1in 
We would like to remark that if $k=1$ then $\fa_1\in \L(3,1;3)\sub\fol(2;3)$. For a generic member $\fa\in \L(3,1;3)$ it can be shown that  $tdg(\G_\fa)=4$, so that $\G_\fa$ is not birational (see example \ref{dg2}).
 
%fff

If $k\ge2$ then $\fa_k\in \L(k+2,1;3)\sub\fol(k+1;3)$. For a generic member $\fa\in\L(k+1,3;3)$ the singular set is a plane curve of degree $k+2$, but in the case of $\fa_k$, the singular set degenerates in a straight line of multiplicity $k+2$.
\end{rem}

\begin{ex}\label{exAB}
Magic example.\rm\,\, Here is an example for which the associated foliation has no special transversal structure and contracts hypersurfaces that are not invariant for the foliation. Let $\om=A(u,v)du+B(u,v)dv$, where $A$ and $B$ are non-zero polynomials. We consider the pull-back map $\Phi(x,y,z,t)=(x/t,y/z)$. Note that
\[
\Phi^*(\om)=A(x/t,y/z)d\left(\frac{x}{t}\right)+B(x/t,y/z)d\left(\frac{y}{z}\right):=\te.
\]
Then we have
\[
\te=A(x/t,y/z)\,\frac{t\,dx-x\,dt}{t^2}+B(x/t,y/z)\,\frac{z\,dy-y\,dz}{z^2}
\]
and so
\[
z^2t^2\,\te=z^2t\,A(x/t,y/z)dx+zt^2\,B(x/t,y/z)dy-yt^2\,B(x/t,y/z)dz-xz^2\,A(x/t,y/z)dt
\]

In homogeneous coordinates the Gauss map is $\G[x:y:z:t]=$
\[
=[z^2t\,A(x/t,y/z):zt^2\,B(x/t,y/z):-yt^2\,B(x/t,y/z):-xz^2\,A(x/t,y/z)]\,.
\]

In particular, the hypersurfaces $(A(x/t,y/z)=0)$ and $(B(x/t,y/z)=0)$ are contracted by $\G$.
These hypersurfaces in general are not invariant for the foliation defined by $\te$.

The Gauss map $\G$ associated to the foliation $\Phi^*(\fa_\om)$ is birational because in the affine coordinate system $t=1$ we have
\[
\G(x,y,z)=\left(-\frac{1}{x},-\frac{\,B(x,y/z)}{xz\,A(x,y/z)},\frac{y\,B(x,y/z)}{xz^2\,A(x,y/z)}\right)
\]
We can solve $\G(x,y,z)=(a,b,c)$ as
\[
x=-1/a\,,\,y/z=-c/b\,,\,z=-\frac{B(-1/a,-c/b)}{b\,x\,A(-1/a,-c/b)}\,,\,y=-\frac{cz^2\,A(-1/a,-c/b)}{aB(-1/a,-c/b)}\,.
\]

We note that, in general, the original foliation in $\p^2$, has no invariant algebraic curves as in the Jouanolou foliation \cite{ls}  (see chapter 4 of \cite{j}). Sometimes can even have some dense leaves (see \cite{fl} and \cite{lr}).

With this method, we can obtain foliations on $\p^3$ with birational Gauss map of any desired degree $\ge2$, without rational first integral. 

To prove this statement, it suffices to consider $\om=f(u)du+dv$, where $f(u)$ is a polynomial of degree $s$. Applying the pull-back by 
$\Phi$ in the affine chart $t=1$, we obtain $z^2f(x)dx+zdy-ydz$, which defines a foliation of degree $s+2$. 

%In example \ref{ex5} we will see also explicit examples of all degree $\ge2$s.

{\bf Note:} We have called this example "magic example" because we don't see a geometric reason why $\fa=\Phi^*(\G)$ always has a birational Gauss map.
\end{ex} 

In example \ref{exAB} all fibers of the pull-back map $[x:y:z:t]\dasharrow (x/t,y/z)$ are straight lines. 
A natural question is: what happens if the fibers are not plane curves?

In this case, we have the following:

\begin{thm}\label{tnp}
Let $F\colon\p^3\dasharrow\p^2$ be a rational map whose fibers are not plane curves, except for a finite number of them.
Suppose that there is a foliation $\G$ of $\p^2$ with the property that the foliation $\fa=F^*(\G)$ has Gauss map birational and a contracted hypersurface which is not invariant.
Then $\fa$ has a rational first integral.
\end{thm}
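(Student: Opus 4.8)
The plan is to prove that the foliation $\G$ on $\p^2$ has a rational first integral; this is enough, for if $g\colon\p^2\dasharrow\p^1$ is one then $g\circ F$ is a rational first integral of $\fa=F^*(\G)$, a general level of $g\circ F$ being the $F$-preimage of an invariant curve of $\G$ and hence $\fa$-invariant. By the Darboux--Jouanolou theorem it suffices to exhibit infinitely many distinct irreducible $\G$-invariant algebraic curves.

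We may assume the contracted, non-invariant hypersurface $V$ is irreducible. As $V$ is not $\fa$-invariant, the corollary to Proposition \ref{p1} shows it is not contracted to a point, so (using Proposition \ref{p1}) $B:=\overline{\G_\fa(V)}$ is an irreducible curve. Taking the Stein factorization $\G_\fa|_V\colon V\to B'\to B$, for generic $b'\in B'$ the fibre $\delta:=\delta_{b'}$ is an irreducible curve not contained in $Sing(\fa)$ on which $\G_\fa$ is constant, say $\equiv H\in\breve\p^3$. The first key point is that $\delta$ is contained in a leaf of $\fa$: since every point lies on its own tangent hyperplane, $\delta\subset\G_\fa^{-1}(H)\subset H$; and $\G_\fa\equiv H$ on $\delta$ means that at every regular $x\in\delta$ the tangent plane of the leaf of $\fa$ through $x$ is $H\supset T_x\delta$. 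Thus $\delta$ is everywhere tangent to $\fa$, so $\delta\subset L$ for some leaf $L$ of $\fa$.

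Write $L=F^{-1}(\ell)$ with $\ell$ a leaf of $\G$, so $F(\delta)\subset\ell$. The crux is to show that, for generic $b'$, $F(\delta_{b'})$ is a curve and not a point. If it were a point for generic $b'$, these points would sweep out an irreducible $S\subset\p^2$ with $V\subset F^{-1}(S)$, forcing $\dim S=1$ since $\dim V=2$; then the general $F(\delta_{b'})$ is a general point of $S$, so its fibre is not a plane curve, whereas $\delta_{b'}$ is (a component of) that fibre and $\G_\fa\equiv H$ together with $x\in H$ for $x\in\delta_{b'}$ give $\delta_{b'}\subset H$, i.e. $\delta_{b'}$ is a plane curve — a contradiction. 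I expect this to be the main obstacle: it is the single place where the hypothesis that the fibres of $F$ are not plane curves is used, and the argument collapses exactly when the fibres are lines, as in the ``magic example'' (Example \ref{exAB}). Granting the claim, $c_{b'}:=\overline{F(\delta_{b'})}$ is an irreducible algebraic curve; since $F(\delta_{b'})$ is a one-dimensional subset of the leaf $\ell$, we get $\ell\subset c_{b'}$, so $c_{b'}$ is a $\G$-invariant algebraic curve.

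Finally, the curves $c_{b'}$ cannot all coincide as $b'$ ranges over $B'$: if they all equalled $c_0$, then $\delta_{b'}\subset F^{-1}(c_0)$ for all $b'$, whence $V=\overline{\bigcup_{b'}\delta_{b'}}\subset F^{-1}(c_0)$; but $F^{-1}(c_0)$ is $\fa$-invariant (as $c_0$ is $\G$-invariant and $\fa=F^*(\G)$), so $V$, being one of its irreducible components, would be $\fa$-invariant — against the hypothesis. Hence $\G$ has infinitely many invariant algebraic curves, so a rational first integral, which pulls back by $F$ to one of $\fa$. The remaining care, besides the crux, is in the genericity arguments: the Stein factorization, the generic constancy of $\dim F(\delta_{b'})$ in $b'$, and the genericity of the fibre of $F$ met by a general $\delta_{b'}$.
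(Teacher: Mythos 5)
Your proof is correct and follows essentially the same route as the paper's: the fibres of $\G_\fa$ over the curve $\G_\fa(V)$ are plane curves contained in leaves, the non-planarity of the fibres of $F$ is used at exactly the same crux to force their $F$-images to be curves rather than points, and Darboux then finishes the argument. The only cosmetic difference is that you apply Darboux to $\G$ on $\p^2$ and pull the first integral back, while the paper applies it directly to $\fa$ on $\p^3$ via the algebraic leaves contained in $F^{-1}(c_{b'})$; just note that at the last step ``the $c_{b'}$ do not all coincide'' should read ``take infinitely many values'', which your own contradiction already delivers, since if they took finitely many values the irreducible $V$ would lie in one of finitely many invariant surfaces $F^{-1}(c_i)$ and hence be invariant.
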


We will prove Theorem \ref{tnp} in \S\,\ref{ss?}.
Let us see an example illustrating the importance of the hypotheses of the previous Theorem.

\begin{ex}
\rm 
Consider the rational map $F\colon\p^3\dasharrow\p^2$, given in affine coordinates by $(x,y,z) \mapsto (y+x^2,z)$. We take the foliation $\G$ of $\p^2$ given locally by the 1-form $(u-v) du+\lambda udv$, where $\lambda \in \C^*$. We can see that $\fa=F^*(\G)$ is defined by $\omega=Adx+Bdy+Cdz+Ddt,$ where

\begin{align*}
A&=2xt(yt+x^2-zt),\\
B&=t^2(yt+x^2-zt),\\
C&=\lambda t^2(yt+x^2),\\
D&=(yt+2x^2)(zt-yt-x^2)-\lambda tz(ty+x^2),
\end{align*}

\noindent and its Gauss map is birational. Its  inverse is:

\begin{align*}
\G_\fa^{-1}[A:B:C:D]&=[2\lambda ABC((\lambda-1)B-C):\lambda C((2-\lambda)A^2B+4B^2D+A^2C):\\
&B(\lambda A^2+4\lambda BD)(C-\lambda B):4\lambda B^2C((\lambda-1)B-C)].    
\end{align*}

The facts to be noted in this example are the following: The fibers of $F$ are conic plane curves, the hypersurface given by 
$\{yt+x^2-zt=0\}$ is not invariant by $\fa$, however, it is contracted by the Gauss map $\G_\fa$. Finally, we can also note that for $\lambda$ generic, the foliation $\fa$ has no rational first integral, because the 1-form $(u-v) du+\lambda udv$ does not have it.

It is also interesting to relate this example to the magic example (see example \ref{exAB}) where all fibers of the rational map are straight lines and the pull-back of every foliation of $\p^2$ has birational Gauss map.
\end{ex}
\vskip.1in

Before stating the next results, let us recall some facts about a codimension one foliation $\fa$ on $\p^n$.

The singular set of $\fa$ always has irreducible components of codimension two (see for example \cite{ln}).
We will denote as $Sing_2(\fa)$ the union of the irreducible components of codimension two of $Sing(\fa)$.
\\
If $\fa$ is a foliation on $\p^2$, the Milnor number $\mu(\fa,p)$, of $\fa$ at a point $p$ is defined as the dimension of the local ring at $p$ modulo the ideal generated by the local vector field defining the foliation near $p$ (see \cite{CLNS}).
\\

 Let $\fa \in \mathbb{F}ol(d;n)$, where $n\ge3$, and let $V\sub Sing_2(\fa)$ be an irreducible component of codimension two. Fix a smooth point $p\in V$ and a transverse 2-dimensional disk $\Si$ through $p$ such that $\Si$ is transverse to $\fa$ outside $p$. Since the restriction $\fa|_\Si$, is a foliation on $\Si$ with an isolated singularity at $p$ we can define the Milnor number $\mu(\fa|_\Si,p)$. It is called the multiplicity of $\fa$ at $(\Si,p)$.

\begin{defi} 
We define the transversal multiplicity of $V$ as follows. Firstly we called:

\[
\mu(\fa,V,p)=min\{\mu(\fa|_\Si,p)\,|\,\Si\,\text{is a transverse section through $p\in V\}$}\,.
\]

\noindent multiplicity of $\fa$ at $p\in V$. It can be proved that the function $p\in V\mapsto \mu(\fa,V,p)\in\N$ is constant in a Zariski open and dense subset of $V$.
This constant will be denoted as $\mu(\fa,V)$, and it is called the transverse multiplicity of $\fa$ at $V$.
\end{defi}

\begin{rem}
\rm If $V$ has a Zariski open and dense set, say $V'\sub V$, of locally product points (see \cite{ln1}) then the number $\mu(\fa,V',p)$ is constant. A typical case is when all points of $V'$ are of Kupka type (see \cite{ln1}).

For instance, when $\fa$ is defined by a generic logarithmic form that is the case (see proposition \ref{p5}). 
\end{rem}

\begin{thm}\label{p4}
Let $\fa$ be a foliation on $\p^3$ of degree $d$. Let $Sing_2(\fa)=\bigcup_{j=1}^kV_j$, where $V_1,...,V_k$ are the irreducible components of dimension one of $Sing(\fa)$. Then:
\begin{itemize}
\item[(a)] $\sum_{j=1}^k\mu(\fa,V_j).deg(V_j)\le d^2+d+1$. The equality is attained when $\fa$ is a pull-back of a foliation on $\p^2$ by a linear map $\Psi\colon\p^3\dasharrow\p^2$ (see \cite{cl} for a complete description of these foliations).
\item[(b)] Let $m(\fa)=d^2+d+1-\sum_{j=1}^k\mu(\fa,V_j).deg(V_j)$, 
then $\G_\fa\colon\p^3\dasharrow\breve\p^3$ has topological degree $m(\fa)$.
In particular, $\G_\fa$ is dominant if and only if, $m(\fa)> 0$; and $\G_\fa$ is birational if and only if, $m(\fa)=1$.
\end{itemize}
\end{thm}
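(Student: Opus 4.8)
The plan is to compute the topological degree of $\G_\fa$ by restricting to a generic hyperplane and reducing everything to the classical formula for the number of singularities of a foliation of $\p^2$. Fix a generic hyperplane $H\cong\p^2\sub\p^3$. For $H$ generic it is not $\fa$-invariant, it avoids the finitely many isolated singularities of $\fa$, it meets each component $V_j\sub Sing_2(\fa)$ transversally in $dg(V_j)$ distinct points, pairwise disjoint for different $j$ and lying in the Zariski open dense subset of $V_j$ where the transverse multiplicity equals $\mu(\fa,V_j)$, and $\om|_H$ defines a foliation $\fa|_H$ of $\p^2$ of degree exactly $d$ (the degree cannot drop: a generic line of $H$ is a generic line of $\p^3$, hence has $d$ tangencies with the leaves; one also checks that $\om|_H$ has no nonconstant common factor for generic $H$). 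By the classical Baum--Bott/Poincar\'e--Hopf count for plane foliations (equivalently, $c_2(\Omega^1_{\p^2}(d+2))=d^2+d+1$; see \cite{CLNS}),
\[
\sum_{p\in Sing(\fa|_H)}\mu(\fa|_H,p)=d^2+d+1 .
\]

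Next I would decompose $Sing(\fa|_H)$. A point $z\in H$ is singular for $\fa|_H$ precisely when $\om|_H$ vanishes at $z$, i.e. either $z\in Sing(\fa)$ or $\de_\fa(z)=T_zH$; in the latter case, if $z\notin Sing(\fa)$, the hyperplane through $z$ containing $\de_\fa(z)$ is $H$ itself, that is $z\in\G_\fa^{-1}(H)$ (with $H$ read as a point of $\breve\p^3$). Since a generic $H$ avoids the isolated singularities of $\fa$, this yields a disjoint decomposition $Sing(\fa|_H)=\big(Sing_2(\fa)\cap H\big)\,\sqcup\,\G_\fa^{-1}(H)$. At a point $p\in V_j\cap H$ a generic hyperplane is itself an admissible transverse section to $\fa$ along $V_j$, and $p$ lies in the open set where the transverse multiplicity is minimal, so $\mu(\fa|_H,p)=\mu(\fa,V_j)$. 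Plugging this in, the identity above becomes
\[
d^2+d+1=\sum_{j=1}^k\mu(\fa,V_j)\,dg(V_j)+\sum_{z\in\G_\fa^{-1}(H)}\mu(\fa|_H,z).
\]

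Part (a) then follows at once, since the last sum is a sum of Milnor numbers, hence $\ge0$. For the equality in the pull-back case $\fa=\Psi^*(\G')$, with $\Psi\colon\p^3\dasharrow\p^2$ linear of centre $p_0$ and $\deg\G'=d$: the leaves of $\fa$ are the cones over the leaves of $\G'$ with vertex $p_0$, so $Sing_2(\fa)$ is exactly the union of the lines $\ell_p$ over $p\in Sing(\G')$ (see also \cite{cl}); each such line has degree one, and restricting $\fa$ to a disk transverse to $\ell_p$ pulls $\G'$ back to itself, whence $\mu(\fa,\ell_p)=\mu(\G',p)$ and $\sum_j\mu(\fa,V_j)dg(V_j)=\sum_{p\in Sing(\G')}\mu(\G',p)=d^2+d+1$. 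For part (b), assume $\G_\fa$ is dominant (so that $\G_\fa^{-1}(H)$ is finite and non-empty for generic $H$, which already forces $m(\fa)>0$). In characteristic zero the critical values of the dominant map $\G_\fa\colon\p^3\dasharrow\breve\p^3$ lie in a proper Zariski closed set, so a generic $H$ is a regular value; then $\G_\fa^{-1}(H)$ is a reduced set of $tdg(\G_\fa)$ points, and at each of them $D\G_\fa$ is an isomorphism, so by Remark \ref{r12} the tangency of the leaf with $H$ is of Morse type, i.e. $\mu(\fa|_H,z)=1$. Hence $\sum_{z\in\G_\fa^{-1}(H)}\mu(\fa|_H,z)=tdg(\G_\fa)$, and the displayed identity gives $tdg(\G_\fa)=d^2+d+1-\sum_j\mu(\fa,V_j)dg(V_j)=m(\fa)$. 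Finally, if $\G_\fa$ is birational it is dominant with $tdg(\G_\fa)=1$, so $m(\fa)=1$; conversely, if $\G_\fa$ is dominant and $m(\fa)=1$ then $tdg(\G_\fa)=1$, and a dominant rational self-map of $\p^3$ of topological degree one is birational.

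The step I expect to be the real obstacle is the bookkeeping around $Sing_2(\fa)$ in the second paragraph: one must show that all the ``bad'' behaviours — $H$ tangent to some $V_j$, meeting it in a non-reduced point, meeting it at a point of the proper closed subset of $V_j$ where the transverse multiplicity jumps, containing an isolated singularity, or making $\om|_H$ drop degree or acquire a common factor — occur only for $H$ in a proper Zariski closed subset of $\breve\p^3$, so that a genuinely generic $H$ avoids all of them simultaneously; and, more delicately, one must show that near a point $p\in V_j\cap H$ where $H$ is transverse to $V_j$ the local model of $\fa|_H$ is exactly that of the generic transverse section, so that $\mu(\fa|_H,p)=\mu(\fa,V_j)$ and no tangency point is hidden inside $V_j\cap H$. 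This is precisely where one uses that $\mu(\fa,V_j)$ is the minimal transverse multiplicity and is attained on a Zariski open dense subset of $V_j$.
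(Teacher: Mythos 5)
Your proof follows essentially the same route as the paper's: restrict to a generic $2$-plane, invoke the classical count $\sum_{p}\mu(\fa|_H,p)=d^2+d+1$ for a degree-$d$ foliation of $\p^2$, split the singularities of $\fa|_H$ into the points of $Sing_2(\fa)\cap H$ (contributing $\sum_j\mu(\fa,V_j)\,dg(V_j)$) and the Morse tangencies forming $\G_\fa^{-1}(H)$ (contributing $tdg(\G_\fa)$ for generic $H$). The argument is correct, and the genericity and transverse-multiplicity caveats you flag at the end are exactly the points the paper also relies on (and treats no more carefully than you do).
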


\begin{cor}\label{c}
Let $\fa_o$ be a foliation of degree $d$ on $\p^3$. Assume that there is a 2-plane $\p^2\simeq\Si\sub\p^3$ such that the restricted foliation $\fa_o|_\Si$ has $d^2+d$ singularities with generic linear part, in the sense that the determinant and the trace are non zero. Suppose further that the Gauss map $\G_{\fa_o}$ is dominant. Let $Z$ be the irreducible component of $\fol(3,d)$ containing $\fa_o$. Then:
\begin{itemize}
\item[1.] $\G_{\fa_o}$ is birational.
\item[2.] If $\fa\in Z$ is a generic foliation in the same component then its Gauss map is birational.
\end{itemize}
\end{cor}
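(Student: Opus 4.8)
The plan is to deduce the statement from Theorem~\ref{p4}. Since $\G_{\fa_o}$ is assumed dominant, Theorem~\ref{p4}(b) reduces part~1 to showing $m(\fa_o)=1$, i.e. $\sum_{j=1}^k\mu(\fa_o,V_j)\,dg(V_j)=d^2+d$ where $Sing_2(\fa_o)=\bigcup_jV_j$; the idea is to compute the left-hand side from the singular scheme of the restricted foliation $\fa_o|_\Si$. First I would note that, having at least $d^2+d$ singular points, $\fa_o|_\Si$ is not identically zero and the coefficients of its defining $1$-form have no non-constant common factor (in either case it would have at most $(d-1)^2+(d-1)+1<d^2+d$ singularities counted with multiplicity). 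Hence $\Si$ is not $\fa_o$-invariant, $\fa_o|_\Si$ is a genuine foliation of $\Si\simeq\p^2$ of degree exactly $d$, its singular scheme has length $d^2+d+1$, and no $V_j$ is contained in $\Si$, so $\#(V_j\cap\Si)\le dg(V_j)$ for every $j$.

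The key point I would establish is a dichotomy for the singular points of $\fa_o|_\Si$. One has the disjoint decomposition
\[
Sing(\fa_o|_\Si)=\bigl(Sing(\fa_o)\cap\Si\bigr)\ \sqcup\ \mathrm{Tang}(\fa_o,\Si),
\]
where $\mathrm{Tang}(\fa_o,\Si)$ is the set of $m\in\Si\setminus Sing(\fa_o)$ at which the leaf of $\fa_o$ through $m$ is tangent to $\Si$ (equivalently $\G_{\fa_o}(m)=\Si$). Near such an $m$ the foliation $\fa_o$ is defined by $df$ for a submersion $f$ whose differential at $m$ kills $T_m\Si$, so $\fa_o|_\Si$ is defined near $m$ by the \emph{exact} $1$-form $d(f|_\Si)$; hence $\fa_o|_\Si$ has near $m$ a holomorphic first integral with an isolated critical point there, its (Hamiltonian) generating vector field is divergence-free, and therefore the linear part of $\fa_o|_\Si$ at $m$ has trace zero. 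Consequently every singular point of $\fa_o|_\Si$ whose linear part has non-zero trace lies on some $V_j\subset Sing_2(\fa_o)$; if in addition its determinant is non-zero it is a reduced point of the singular scheme of $\fa_o|_\Si$, $\Si$ is an admissible transverse section of $\fa_o$ there (no other singular point of $\fa_o|_\Si$, hence no tangency, lies nearby), and the definition of the transverse multiplicity gives $\mu(\fa_o,V_j)\le\mu(\fa_o|_\Si,m)=1$, so $\mu(\fa_o,V_j)=1$.

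Now I would count. By hypothesis $\fa_o|_\Si$ has at least $d^2+d$ ``good'' singular points, meaning those whose linear part has non-zero determinant and non-zero trace; let $g_j$ be the number of them lying on $V_j$, so that $\sum_jg_j\ge d^2+d$. Using $g_j\le\#(V_j\cap\Si)\le dg(V_j)$, and $\mu(\fa_o,V_j)=1$ whenever $g_j\ge1$,
\[
\sum_{j}\mu(\fa_o,V_j)\,dg(V_j)\ \ge\ \sum_{j:\,g_j\ge1}dg(V_j)\ \ge\ \sum_jg_j\ \ge\ d^2+d,
\]
so $m(\fa_o)\le1$, and with Theorem~\ref{p4}(a) we get $m(\fa_o)\in\{0,1\}$. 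To exclude $m(\fa_o)=0$ I would invoke the equality case of Theorem~\ref{p4}(a) (see \cite{cl}): $m(\fa_o)=0$ would force $\fa_o$ to be a linear pull-back of a foliation on $\p^2$, and then $\G_{\fa_o}$ would not be dominant by Remark~\ref{r12}, contradicting the hypothesis. Hence $m(\fa_o)=1$, and since $\G_{\fa_o}$ is dominant, Theorem~\ref{p4}(b) gives that $\G_{\fa_o}$ is birational. This proves~1.

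For~2, I would argue that the hypothesis of~1 spreads to a dense open subset of $Z$. Consider
\[
W=\bigl\{(\fa,\Si')\in Z\times\breve\p^3\ :\ \G_\fa\text{ dominant, and }\fa|_{\Si'}\text{ a degree-}d\text{ foliation with}\ge d^2+d\text{ nondegenerate singular points of nonzero trace}\bigr\}.
\]
Dominance of $\G_\fa$ is Zariski-open on $Z$ by Remark~\ref{r12}; the locus where $\fa|_{\Si'}$ is a genuine degree-$d$ foliation is Zariski-open in $Z\times\breve\p^3$, and over it the singular scheme of $\fa|_{\Si'}$ is flat of length $d^2+d+1$, so the number of its reduced points with non-zero trace is lower semicontinuous (such ``good'' points can merge together or lose the nonzero-trace condition only along a closed set); thus the condition ``at least $d^2+d$ good points'' is Zariski-open, and $W$ is Zariski-open. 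Since $(\fa_o,\Si)\in W$, $W\neq\emptyset$; as the projection $Z\times\breve\p^3\to Z$ is an open map, the image of $W$ is a non-empty Zariski-open, hence dense, subset of the irreducible variety $Z$, and by part~1 every foliation in it has birational Gauss map. The main obstacle is the local computation at tangency points --- and its corollary that the ``good'' singular points of $\fa_o|_\Si$ are exactly the reduced, nonzero-trace points of $Sing(\fa_o)\cap\Si$, which is what connects the hypothesis on $\Si$ to the invariant $\sum_j\mu(\fa_o,V_j)\,dg(V_j)$; a subsidiary point is that ruling out $m(\fa_o)=0$ uses the equality case of Theorem~\ref{p4}(a) in the form ``equality occurs only for linear pull-backs''.
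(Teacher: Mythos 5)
Your strategy is essentially the paper's: show that the $d^2+d$ nondegenerate, nonzero-trace singularities of $\fa_o|_\Si$ are transverse slices of codimension-two components of $Sing(\fa_o)$ with transverse multiplicity one, deduce $\sum_j\mu(\fa_o,V_j)\,dg(V_j)=d^2+d$, and apply Theorem \ref{p4}; your explicit proof that tangency points of $\Si$ with $\fa_o$ yield trace-zero singularities of $\fa_o|_\Si$ (via the exact form $d(f|_\Si)$) is precisely the mechanism the paper leaves implicit, and your incidence-variety openness argument for part~2 is a formalization of the paper's appeal to stability of nondegenerate singularities under deformation. Most of the write-up is sound and, in places, more careful than the published proof.

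There is, however, one step whose justification fails: the exclusion of $m(\fa_o)=0$. You invoke ``the equality case of Theorem \ref{p4}(a)'' in the form \emph{equality only for linear pull-backs}, but the theorem asserts only that linear pull-backs attain equality, and the converse you need is actually false: by the tangency count in the proof of Theorem \ref{p4}, $m(\fa)$ is the number of Morse tangencies of a generic plane, so \emph{every} foliation with non-dominant Gauss map has $m(\fa)=0$ --- in particular the non-pull-back foliations of Proposition \ref{p3} with first integral (\ref{eq}) also attain equality in \ref{p4}(a). The repair is immediate and bypasses pull-backs: since $\G_{\fa_o}$ is dominant, the Morse tangency points form a dense set (Remark \ref{r12}) whose image is dense in $\breve\p^3$, so a generic plane carries at least one Morse tangency and $m(\fa_o)\ge1$. (This is in effect what the paper does: it identifies the single remaining length-one point of the singular scheme of $\fa_o|_\Si$ as a Morse tangency.) A second, minor gloss --- shared with the paper --- is that your dichotomy only places a nonzero-trace singular point of $\fa_o|_\Si$ on $Sing(\fa_o)\cap\Si$, while the count needs it on a codimension-two component $V_j$; a priori $\Si$ could pass through an isolated singular point of $\fa_o$, but Malgrange's theorem provides a local holomorphic first integral there, so the same exactness argument shows the restriction again has trace zero at such a point, and the dichotomy survives. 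With these two repairs the proof is complete.
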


The following three corollaries of the previous Theorem are valid in all dimensions, and give us sufficient conditions on the singular set of the foliation, to have birational Gauss map. The first and third are similar except for a small simplification of the formula described. The second one is for restrictions of foliations.

\begin{cor}\label{c12}
Let $\fa$ be a foliation on $\p^n$ of degree $d$, $n\ge4$.
Let $V_1,...,V_k$ be the irreducible components of $Sing_2(\fa)$. If $\sum_{j=1}^k \mu(\fa,V_j)\,deg(V_j)=d^2+d$, then $\G_\fa$ is birational.
\end{cor}

\begin{cor}\label{new}
Let $\fa$ be a foliation on $\p^n$ of degree $d$, $n\ge 3$.
Let $V_1,...,V_k$ be the irreducible components of $Sing_2(\fa)$.
Assume that $\sum_{j=1}^k \mu(\fa,V_j)\,deg(V_j)=d^2+d$. Let $3\leq m \leq n$, and suppose that $\p^m$ is linearly embedded in $\p^n$, then the Gauss map of the restriction $\fa_{|\p^m}$ is birational.
\end{cor}

\begin{cor}\label{c14}
Let $\fa$ be a foliation on $\p^n$ of degree $d$, $n\ge4$.
Let $V_1,...,V_k$ be the irreducible components of $Sing_2(\fa)$. If $\sum_{j=1}^k deg(V_j)=d^2+d$, then $\G_\fa$ is birational.
\end{cor}

\begin{rem}\label{rem15}
\rm We would like to note that the hypothesis of corollary \ref{c12}\, is sufficient for the Gauss map of a foliation on $\p^n$ to be birational, but is not necessary when $n\ge4$. 
For instance, in the case of example \ref{ex3}, where $\om=z_0z_1...z_n\sum_{j=0}^n\mu_j\frac{dz_j}{z_j}$, we have:\\
1. $Sing(\fa_\om)=\bigcup_{i<j}V_{ij}$, where $V_{ij}=(z_i=z_j=0)$.\\
2. $\mu(\fa_\om,V_{ij})=1$ and $deg(V_{ij})=1$, $\forall i<j$.\\
This implies $\sum_{i<j}\mu(\fa_\om,V_{ij})\,deg(V_{ij})=\frac{n(n+1)}{2}=:S$.\\
3. Since $deg(\fa_\om)=n-1:=d$ we get
\[
(*)\hskip.3in d^2+d+1-S=\frac{(n-1)(n-2)}{2}:=m
\]
As the reader can check, we have $m=1$ if, and only if, $n=3$.
Therefore if $n\ge4$ the condition in corollary \ref{c12} is not necessary.
\end{rem}

Let us see some applications of theorem \ref{p4}.

\begin{ex}\label{hom}
\rm Let $F\in\C[z_1,...,z_n]$ be a homogeneous polynomial. We say that $F$ is a {\it homaloid} if 
the map $dF:=(F_{z_1},...,F_{z_n})\colon\C^n\to\C^n$ is birational.

The problem of classification of homaloids was studied originally by Dolgachev in \cite{do}.
Essentially he proved that the reduced homaloids in $\C^3$ are the following:\\ 
(1) A smooth conic: $F(x,y,z)=x^2+y^2+z^2$. The foliation, given by the level sets of $F$, has degree one.\\
(2) The union of three lines in general position: $F(x,y,z)=xyz$. The foliation in this case has degree two (see example \ref{dg2}).\\
(3) The union of a smooth conic and a line tangent to it: $F(x,y,z)=y(x^2+yz)$. The foliation has degree two and is in the component $\L(2,1,1;3)$, however, it is not generic in the component (see \cite{cl}).

A. Dimca and S. Papadima also studied this problem in \cite{dp}. They have expressed
the topological degree of the Gauss map in terms of the variety $(F=0)$.

T. Fassarella and J. Vitório Pereira have also considered this problem (see \cite{tj}), but from the point of view of the theory of foliations.

All these examples can be verified using Theorem \ref{p4}. 
Let us verify case (3). In homogeneous coordinates, it has a rational first integral of the form $F(x,y,z,t)=\frac{y(x^2+yz)}{t^3}$, whose foliation $\fa$ is defined by the form 
\[
\om=2xyt\,dx+t(x^2+2yz)\,dy+y^2t\,dz-3y(x^2+yz)\,dt,
\]
so that $Sing_2(\fa)=V_1\cup V_2\cup V_3$, where $V_1=(x=y=0)$, $V_2=(y=t=0)$ and $V_3=(t=x^2+yz=0)$.
It can be verifed that $\mu(\fa_\om,V_1)=3$, $\mu(\fa_\om,V_2)=1$ and $\mu(\fa_\om,V_3)=1$.
For instance, $\Si=(z=t=1)$ is a transversal section to $V_1$ and
\[
\om|_\Si=2xy\,dx+(x^2+2y)\,dy\,\implies\,\mu(\fa_\om,V_1)=3\,.
\]
Since $deg(V_1)=deg(V_2)=1$ and $deg(V_3)=2$, we get $\sum_j\mu(\fa_\om,V_j)deg(V_j)=6$. On the other hand, the degree of the foliation is two, and so $d^2+d+1-6=1$, so that $\G_\fa$ is birational.
\end{ex}

\begin{rem}
\rm A particular case of example \ref{exAB} is $\om=u^pdu+v^qdv$, where $p,q\in\N$. In this case, we get
\[
\te=\frac{x^p(ydx-xdy)}{y^{p+2}}+\frac{z^q(tdz-zdt)}{t^{q+2}}=
\]
\[
=\frac{1}{y^{p+2}t^{q+2}}\left(x^pyt^{q+2}dx-x^{p+1}t^{q+2}dy+y^{p+2}z^qtdz-y^{p+2}z^{q+1}dt\right)
\]
whose Gauss map is 
\[
\G[x:y:z:t]=[x^pyt^{q+2}:-x^{p+1}t^{q+2}:y^{p+2}z^qt:-y^{p+2}z^{q+1}]
\]
with inverse $\G^{-1}[a:b:c:d]=$
\[
=[(-1)^{q+1}b^{p+1}\,c^{q+2}:(-1)^qa\,b^p\,c^{q+2},(-1)^{p+1}a^{p+2}\,d^{q+1}:(-1)^pa^{p+2}\,c\,d^q].
\]

We say that a foliation on $\p^n$ is {\it monomial} if it is defined in homogeneous coordinates by an integrable polynomial 1-form of the type
\begin{equation}\label{eqmon}
\om=\sum_{j=0}^nM_j(z)dz_j\,\,,
\end{equation}
where the $M_{j's}$ are monomials of the same degree; $M_j(z)=\la_j\,z^{\si_j}$. As usually, we use the notations $\si=(\si(0),...,\si(n))$ 
and $z^\si=z_0^{\si(0)}.z_1^{\si(1)}...z_n^{\si(n)}$. 
We will assume that in (\ref{eqmon}) the monomials are not scalar multiples of one particular monomial:
$[M_0:M_1:...:M_n]\ne [\la_0:\la_1:...:\la_n]\in\p^n$.
A typical example in $\p^3$ is the one with $\om=u^pdu+v^qdv$ with $u=x/y$ and $v=z/t$.

Here we will prove the following:

\begin{thm}\label{pmon}
Let $\om=\sum_{i=1}^4A_i(z)dz_i$ be a monomial integrable 1-form on $\C^4$, defining a monomial foliation $\fa_\om$ on $\p^3$ of degree $\ge2$. If the Gauss map $\G_\om$ of $\fa_\om$ is dominant then $\fa_\om$ belongs to $\L(1,1,1,1;3)$; or it is a pull-back of a form on $\C^2$ like in example \ref{exAB}, for degree $\ge3$. In particular, $\G_\om\colon\p^3\dasharrow\p^3$ is birational.
\end{thm}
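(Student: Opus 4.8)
The plan is to reduce the classification to a manageable combinatorial problem about the exponent vectors $\si_0,\si_1,\si_2,\si_3$ of the four monomials $A_i=\la_i z^{\si_i}$. First I would exploit the two structural constraints that any monomial integrable form satisfies: the Euler condition $i_R\om=0$, which says $\sum_i \la_i z_i z^{\si_i}=0$ as a polynomial identity, and the integrability condition $\om\wedge d\om=0$. The Euler identity is the key rigidifier: for a sum of monomials to vanish identically, the monomials $z_i z^{\si_i}$ must cancel in pairs (or be individually zero), which forces a very restricted pattern of coincidences among the shifted exponent vectors $\si_i+e_i$. I would enumerate the possible "cancellation patterns'' (how the four terms $z_i z^{\si_i}$ pair up), noting that terms with $\la_i=0$ drop out and reduce to lower-dimensional monomial forms.

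Next, for each surviving pattern I would write down the Gauss map explicitly: in homogeneous coordinates $\G_\om[z]=[A_0(z):A_1(z):A_2(z):A_3(z)]$, a map given by four monomials. Dominance of $\G_\om$ is, by the Remark after the definition of the Gauss map, equivalent to $dA_0\wedge dA_1\wedge dA_2\wedge dA_3\not\equiv 0$; for monomial maps this Jacobian is (up to a monomial factor) a constant times $\det M$ where $M$ is the $4\times 4$ matrix of exponents $\si_i$. So dominance is equivalent to the non-degeneracy of the exponent matrix, which immediately kills most of the cancellation patterns from the previous step and leaves only a short list. For the patterns that survive, a monomial map with invertible exponent matrix over $\Q$ is a monomial birational map (its inverse is the monomial map with exponent matrix $\det(M)\cdot M^{-1}$, suitably interpreted projectively, exactly as in the worked examples with $\om=u^pdu+v^qdv$), so birationality of $\G_\om$ is automatic once dominance holds — this handles the last sentence of the statement.

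It then remains to identify which foliations the surviving patterns correspond to. I expect two families: (i) exponent matrices that are, up to permutation, the "fully logarithmic'' type, where $\om$ is a scalar combination of $z_0z_1z_2z_3\,\frac{dz_j}{z_j}$ — these are exactly the members of $\L(1,1,1,1;3)$ appearing in Example \ref{ex3}; and (ii) matrices of "block'' type $\begin{pmatrix} * & * & 0 & 0\\ * & * & 0 & 0 \\ 0 & 0 & * & * \\ 0 & 0 & * & * \end{pmatrix}$ (after permuting coordinates), which is precisely the shape produced by the pull-back construction $\Phi(x:y:z:t)=(x/y,z/t)$ applied to a monomial form $u^p du+v^q dv$ on $\C^2$ as in Example \ref{exAB}. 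The degree hypothesis $\ge 2$ (resp.\ the need for degree $\ge 3$ in the second case) enters in discarding small exponents: a degree-$2$ monomial foliation has the $A_i$ of degree $3$, which is barely enough room for a non-degenerate exponent matrix, and the pull-back family only realizes degrees $\ge 3$ since $p,q\ge 1$ forces the $A_i$ to have degree $\ge 4$.

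The main obstacle I anticipate is the combinatorial enumeration in the first step combined with checking the Jacobian non-vanishing in the second: one must be careful that the Euler relation $\sum\la_i z_iz^{\si_i}=0$ can force $\si_i = \si_j - e_j + e_i$ type identifications in several inequivalent ways, and that integrability $\om\wedge d\om=0$ imposes genuine further constraints beyond Euler (it is not automatic for an arbitrary monomial form). Keeping the bookkeeping organized — probably by working up to the obvious $S_4$ action permuting coordinates and simultaneously the terms — and correctly tracking the monomial common factors that one divides out of the $A_i$ (which affect the degree count but not the projective map) is where the real work lies; the birationality conclusion itself is then a formal consequence of the invertibility of the exponent matrix over $\Q$.
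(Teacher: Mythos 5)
Your overall strategy matches the paper's: the proof in \S 3.5 also starts from the Euler relation, written there as $B_1+B_2+B_3+B_4=0$ with $B_i=z_iA_i$, splits into the case where all the $B_i$ are proportional to a single monomial (which dominance forces to be $z_1z_2z_3z_4$, giving $\L(1,1,1,1;3)$) and the case where they cancel in pairs $B_1=-B_2$, $B_3=-B_4$ (established by an induction on the degree rather than your direct enumeration of cancellation patterns), and then uses integrability --- exactly as you anticipate, it is a genuine extra constraint --- to show $B_1/B_3$ is a monomial in $z_1/z_2$ and $z_3/z_4$, which yields the pull-back form $\pm u^{\pm p-1}du+v^{\mp q-1}dv$.

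There is, however, one genuine gap: the claim that ``a monomial map with invertible exponent matrix over $\Q$ is a monomial birational map, with inverse given by the exponent matrix $\det(M)\,M^{-1}$'' is false. The candidate inverse composes with $\G_\om$ to give $[z_0^{\det M}:\cdots:z_n^{\det M}]$, which is \emph{not} of the form $f\cdot\mathrm{Id}$ for a single polynomial $f$ and is not the identity of $\p^n$ (already $[x^2:y^2:z^2:t^2]$ has invertible exponent matrix but is $8$-to-one). For a monomial map of $\p^n$, birationality is equivalent to the induced endomorphism of the character lattice of the open torus being invertible over $\Z$, not over $\Q$; invertibility over $\Q$ only gives dominance. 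So the final sentence of the theorem does not follow formally from your second step. The paper avoids this by not invoking any such general principle: once the two normal forms are reached, birationality is inherited from the explicit inverses computed in examples \ref{ex3} and \ref{exAB} (where $\G(x,y,z)=(a,b,c)$ is solved coordinate by coordinate). To repair your argument you would either need to check that the relevant lattice maps have determinant $\pm1$ in the surviving cases, or simply quote those examples as the paper does.
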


The proof of Theorem \ref{pmon} will be done at \S \ref{sspmon}.
\end{rem}

\begin{ex}\label{ex4}
\rm Here is another application of theorem \ref{p4}. The exceptional foliation of degree two on $\p^3$,  $\E\in\fol(2;3)$, is defined in homogeneous coordinates by the form 
\[  
\om=t(2y^2-3xz)dx+t(3tz-xy)dy+t(x^2-2yt)dz+(2x^2z-yzt-xy^2)dt
\]

This example is studied in detail in \cite{cl}.

In homogeneous coordinates, it has a rational first integral of the form $F=f^2/g^3$, where $deg(f)=3$ and $deg(g)=2$. It is called exceptional because the logarithmic form $\frac{dF}{F}=2\frac{df}{f}-3\frac{dg}{g}$ has $t$ as divisor of zeroes.
We note that $f(x,y,z,t)=zt^2-xyt+x^3/3$, $g(x,y,z,t)=yt-x^2/2$ (see \cite{cl}),
$fg\frac{dF}{F}=t\om$ and $cod(Sing(\om))=2$, so that $\om$ defines $\E$ in homogeneous coordinates.
We would like to observe that it was proved in \cite{cl} that the irreducible component of $\fol(2,3)$ that contains the exceptional foliation is the closure of the orbit of $\E$ by the natural action of $Aut(\p^3)$. From now on this component will be denoted as $\E(2,3;3)$.

In \cite{cl} it is also proven that $Sing_2(\E)$ has three irreducible components:\\
1. The closure in $\p^3$ of the twisted cubic $V_3=\{(s,s^2/2,s^3/6)\,|\,s\in\C\}\sub\C^3=\p^3\setminus(t=0)$.\\
2. The straight line $V_1=(x=t=0)$.\\
3. A conic $V_2=(y^2-2xz=t=0)$.

It is verified also that $\mu(\fa,V_j)=1$, $j=1,2,3$.
Therefore, in this case we have $m(\E)=7-(3+2+1)=1$ and $\G_\E$ is birational.
This example will be studied in detail in \S\,\ref{ss23'} and in \S\,\ref{ss}. For example, the inverse
$\G_\E^{-1}$ will be done in \S\,\ref{ss23'}.
\end{ex}

\begin{ex}\label{e34} 
\rm Another exceptional example, but now in $\p^4$, has also a rational first integral of the form $F=\frac{f^3}{g^4}$, where $deg(f)=4$ and $deg(g)=3$. Let $\E'$ be the foliation on $\p^4$ having the following rational first integral in homogeneous coordinates $(s,x,y,z,t)\in\C^5$:
\[
F(s,x,y,z,t)=\frac{\big(st^3-(2xz+y^2)t^2+2yz^2t-\frac{1}{2}z^4\big)^3}{\big(xt^2-yzt-\frac{1}{3}z^3\big)^4}:=\frac{f^3}{g^4}
\]

The logarithmic form $\frac{dF}{F}=3\frac{df}{f}-4\frac{dg}{g}$ has $t^2$ as zero divisor. 
In fact, a direct computation shows that $fg\frac{dF}{F}=t^2\om$, where the components of $\om$ have degree $4$, so that $deg(\fa)=3$. 
This example will be studied in detail in \S\,\ref{ss33}. In \S\,\ref{ss33} we will see that:
\begin{itemize}
\item[(a).] $Sing_2(\E')$ has three irreducible components, which we denote as $V_1$, $V_2$ and $V_3$.
\item[(b).] $\mu(\fa,V_i)=1$, $1\le i\le 3$.
\item[(c).] $deg(V_1)=1$, $deg(V_2)=3$ and $deg(V_3)=8$.
\end{itemize}

In particular, we get $\sum_{j=1}^3\mu(\E',V_i)\,deg(V_i)=12$, so that the Gauss map of $\E'$ is birational, by theorem \ref{p4}.

We would like to observe that this example was originally introduced in \cite{jc}. It was proved in \cite{cp} that the irreducible component of $\fol(3;4)$ that contains $\E'$ is the closure of its orbit by the natural action of $Aut(\p^4)$. From now on this component will be denoted as $\E(3,4;4)$.
\vskip.1in
We define $\E(3,4;n)$, $n\ge4$, as the set
\[
\E(3,4;n)=\{L^*(\E(3,4;4))\,|\,L\,\,\text{is a linear map}\}\,.
\] 
It can be proved that $\E(3,4;n)$ is an irreducible component of $\fol(4;n)$ for all $n$ (see \cite{cp}). However, if $\E\in\E(3,4;n)$ and $n>4$ then $\G_\E$ is not dominant.
\end{ex}

\begin{rem}\label{exc}
\rm In \S\,\ref{ssexc} we will generalize these examples of exceptional foliations for foliations of degree $n-1$ on $\p^n$. As in the case of examples \ref{ex4} and \ref{e34}, these examples will have a rational first integral of the form $f^{n-1}/g^n$, where $deg(f)=n$ and $deg(g)=n-1$.
\end{rem}

\begin{ex}\label{exn}
According to \cite{cl}, the space of foliations of degree two on $\p^n$ where $n\ge3$, has the following six irreducible components:
\begin{itemize}
\item[(a).] $PB(2\,;n)$; the component containing pull-back of foliations of degree two on $\p^2$ by a linear map $L\colon \p^n\dasharrow\p^2$.
\item[(b).] $\E(2,3\,;n)$; the component containing the exceptional foliations on $\p^n$.
\item[(c).] $\L(1,1,1,1\,;n)$; component of foliations of degree two, defined by logarithmic forms of the type $\sum_{j=1}^4\la_j\frac{d\ell_j}{\ell_j}$, where $deg(\ell_j)=1$, $1\le j\le4$.
\item[(d).] $\L(1,1,2\,;n)$; component of foliations of degree two, defined by logarithmic forms of the type $\la_1\frac{d\ell_1}{\ell_1}+\la_2\frac{d\ell_2}{\ell_2}+\la_3\frac{dq}{q}$, where $deg(\ell_1)=deg(\ell_2)=1$ and $deg(q)=2$.
\item[(e).] $\L(1,3\,;n)$;  component of foliations of degree two, with a first integral of the form $f/\ell^3$, where $deg(\ell)=1$ and $deg(f)=3$.
\item[(f).] $\L(2,2\,;n)$; component of foliations of degree two, with a first integral of the form $q_1/q_2$, where $deg(q_1)=deg(q_2)=2$.
\end{itemize}

Let $n \geq 4$, we will see that the Gauss maps of the generic foliations of degree two on $\p^n$ are not birational. Examples (a), (b), and (c), cannot be dominant on $\p^n$.
For instance, an exceptional foliation on $\p^n$, is a linear pull-back of the exceptional foliation $\E$ on $\p^3$ (see \cite{cl}), and so its Gauss map cannot be dominant.

Generic foliations in components (d), (e), and (f), have dominant Gauss maps, but not birational.
For instance, in case (e) we can assume that $f$ is a generic cubic polynomial. In homogeneous coordinates $(s,t,x,y,z)$ we can suppose that $\ell=s$, so that in the affine coordinate system $s=1$ the foliation $\fa\in \L(1,3\,;n)$ has the first integral $f(1,t,x,y,z)$. The hyperplane $(t=c)$ is tangent to $\fa$ at the points for which $f_x(1,c,x,y,z)=f_y(1,c,x,y,z)=f_z(1,c,x,y,z)=0$. If $f$ and $c$ are generic, this set contains $8$ points, whose image under the Gauss map is the hyperplane $(t=c)$, this implies that the map cannot be birational.

However, there are degenerated foliations in some of these components whose Gauss map is birational.
For instance, in $\L(1,1,2;4)$ we have the foliation $\fa$ with first integral
\[
f(s,t,x,y,z)=\frac{x^2+y^2+z^2}{st}:=\frac{q}{st}
\]
which is defined in homogeneous coordinates by the form
\[
\om=st\,q\left(\frac{dq}{q}-\frac{ds}{s}-\frac{dt}{t}\right)=st\,dq-tq\,ds-sq\,dt\,.
\]

The Gauss map in the affine coordinate system $t=1$ is
\[
\G_\fa(x,y,z,s)=\left(-\frac{2x}{q},-\frac{2y}{q},-\frac{2z}{q},\frac{1}{s}\right)=(a,b,c,e)
\]
and it has as inverse
\[
\G_\fa^{-1}(a,b,c,e)=\left(-\frac{2a}{q(a,b,c)},-\frac{2b}{q(a,b,c)},-\frac{2c}{q(a,b,c)},\frac{1}{e},\right)
\]

\noindent then, the foliation $\fa$ is self-dual.
\end{ex}

\begin{ex}\label{dg2}
 In this example, we use the notation for the components of $\mathbb{F}ol(2;3)$, introduced in the previous one. Using Theorem \ref{p4} it is not difficult to see that there are two of these components for which the generic elements have birational Gauss map: the exceptional component of example \ref{ex4} and the logarithmic component $\L(1,1,1,1;3)$ of example \ref{ex3}.

Using Theorem \ref{p4} we obtain that:\\
1. A generic foliation in $\fa\in\L(2,1,1;3)$ has $tdg(\G_\fa)=2$.\\
2. A generic foliation in $\fa\in\L(3,1;3)$ has $tdg(\G_\fa)=4$.\\
3. A generic foliation in $\fa\in\L(2,2;3)$ has $tdg(\G_\fa)=3$.

This calculation can also be found in example 2.5 of \cite{Fassarella2}. We observe that 
$\n(2,3)=\{0,1,2,3,4\}$. 

However, there are degree two foliations on $\p^3$, in the boundary of the components, for which the Gauss map is birational.
An example is the homaloid (3) of example \ref{hom}.
This foliation, in homogeneous coordinates, has a first integral of the form
\[
F(x,y,z,t)=\frac{y(x^2+yz)}{t^3}:=\frac{f(x,y,z)}{t^3}
\]
and belongs to $\L(1,3;3)\cap \L(1,1,2;3)$.
The foliation is defined in homogeneous coordinates by the form
\[
\om=t\,f\left(\frac{df}{f}-3\frac{dt}{t}\right)=2xyt\,dx+t(x^2+2yz)dy+y^2t\,dz-3y(x^2+yz)\,dt\,.
\]

The irreducible components of $Sing_2(\fa_\om)$ are $V_1=(x=y=0)$, $V_2=(y=t=0)$, $V_3=(x^2+xz=t=0)$.
In the affine coordinate system $(z=1)$ we have 
\[
\om|_{z=1}=2xyt\,dx+t(x^2+2y)dy-3y(x^2+y)\,dt\,\,\implies
\]
\[
\mu(\fa_\om,V_1)=3\,\,,\,\,\mu(\fa_\om,V_2)=\mu(\fa_\om,V_3)=1\implies
\]
\[
d^2+d+1-\sum_{j=1}^3\mu(\fa_\om,V_j).deg(V_j)=7-6=1\,,
\]
so that $\G_{\fa_\om}$ is birational.

Another example is a foliation in $\L(1,3;3)$ whose Gauss map is birational, is defined in homogeneous coordinates by the form $\om=3\,f\,dz-z\,df$, where
$f(x,y,z,t)=xz^2+yzt+y^3$.

In this case, we have $Sing(\fa_\om)=(y=z=0):=V$ and $\mu(\fa_\om,V)=6$, so that its Gauss map, $\G_\om$, is birational according to Theorem \ref{p4}.
We have:
\[
\G_\om(x,y,z,t)=(-z^3,-z(3y^2+zt),xz^2+2yzt+3y^3,-yz^2)
\]
The inverse can be calculated as:
\[
\G_\om^{-1}(a,b,c,e)=(a^2c+2abe-3e^3,-a^2e,-a^3,a(3e^2-ab)).
\]

\noindent The foliation associated with this map is $(\G_\om)_*(\fa_\om)$ and it has the rational first integral:

$$\frac{a^2c+abe-e^3}{a^3},$$

\noindent then, this is also a self-dual foliation.
\end{ex}

\vskip.1in
Next, we will consider foliations defined by closed forms.
Among them, we have logarithmic foliations on $\p^n$, say $\fa\in\L(p_1,...,p_k;n)$. They are defined in homogeneous coordinates $z\in\C^{n+1}$ by a logarithmic 1-form of the type
\[
\om_\fa=f_1...f_k\sum_{j=1}^k\la_j\frac{df_j}{f_j}\,,
\]
where $f_j\in\C[z_0,...,z_n]$ is homogeneous of degree $p_j$ and $\sum_{j=1}^kp_j\la_j=0$ (see \cite{ca} and \cite{cl}).  If $\om_\fa$ has no zero divisor, that is $cod(Sing(\om))\ge2$, then we have $dg(\fa)=p_1+...+p_k-2$.

In this case, we have the following.

\begin{prop}\label{p5}
Let $\fa$ be as above, given by $\om_\fa=f_1...f_k\sum_{j=1}^k\la_j\frac{df_j}{f_j}$, $\sum_j\la_jp_j=0$.
Assume that:\\
1. $\om_\fa$ has no zero divisor.\\
2. The hypersurfaces $V_i$ and $V_j$ of $\p^n$, given by $V_i=(f_i=0)$ and $V_j=(f_j=0)$, are generically transverse, for all $i<j$.

If $n=3$ and $\G_\fa$ is dominant then generically $\G_\fa$ is $\ell$ to one, where
\begin{equation}\label{eq4}
\ell=tdg(\G_\fa)=\sum_{j=1}^kp_j^2+\sum_{i<j}p_ip_j-3\sum_{j=1}^kp_j+3\,.
\end{equation}
\end{prop}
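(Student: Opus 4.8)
\smallskip
\noindent\textbf{Plan of proof.} The idea is to read off $tdg(\G_\fa)$ from Theorem \ref{p4}(b), so the real work is to identify $\deg(\fa)$ and the codimension two part of $Sing(\fa)$ together with its transverse multiplicities, and then to do the bookkeeping. Since $\om_\fa$ has no zero divisor, the excerpt already gives $d:=\deg(\fa)=p_1+\cdots+p_k-2=:P-2$. The plan is: (i) show $Sing_2(\fa)=\bigcup_{i<j}V_{ij}$ with $V_{ij}:=V_i\cap V_j$; (ii) show $\mu(\fa,V_{ij})=1$; (iii) compute $\deg(V_{ij})=p_ip_j$ by B\'ezout; (iv) substitute into Theorem \ref{p4}(b) and simplify.

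For (i), write $\om_\fa=\sum_{l=1}^k\la_l\big(\prod_{m\ne l}f_m\big)df_l$. Along $V_i\cap V_j$ every summand vanishes: if $l\notin\{i,j\}$ the product $\prod_{m\ne l}f_m$ already contains the factor $f_if_j$, and if $l\in\{i,j\}$ it contains the other of $f_i,f_j$; hence $V_{ij}\sub Sing(\fa)$, and it is of pure codimension two by hypothesis 2. Conversely, a point of $Sing(\fa)$ lying on exactly one $V_j$ and on no other $V_m$ must be a singular point of $V_j$, because there $\om_\fa$ reduces to the nonzero multiple $\la_j\big(\prod_{m\ne j}f_m\big)\,df_j$ of $df_j$; for $f_j$ in general position the $V_j$ are smooth, so this does not occur in codimension two. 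Finally, off $\bigcup_jV_j$ the set $Sing(\fa)$ coincides with the zero locus of the closed logarithmic form $\sum_j\la_j\,df_j/f_j$, which for the generic choice allowed by the hypotheses has codimension $\ge 3$. Thus $Sing_2(\fa)=\bigcup_{i<j}V_{ij}$.

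For (ii), fix a generic smooth point $p\in V_{ij}$ lying on no other $V_m$. Using $u:=f_i$ and $v:=f_j$ as two coordinates of a local system at $p$ and restricting $\om_\fa$ to a transverse $2$-disk $\Si$ (third coordinate constant), one gets $\om_\fa|_\Si=v\,h_1\,du+u\,h_2\,dv+(\text{higher order})$, where $h_1(p)$ and $h_2(p)$ are $\la_i$, resp. $\la_j$, times the common nonvanishing value $\prod_{m\ne i,j}f_m(p)$. Hence the vector field dual to $\om_\fa|_\Si$ has nondegenerate linear part at $p$, so $\mu(\fa|_\Si,p)=1$, which being the minimal value forces $\mu(\fa,V_{ij})=1$. (This is exactly the Kupka-type situation mentioned in the remark preceding the statement, since $d\om_\fa|_\Si(p)=(h_2(p)-h_1(p))\,du\wedge dv\ne0$ for generic $\la$.) For (iii), $V_i$ and $V_j$ are generically transverse hypersurfaces of $\p^3$ of degrees $p_i$ and $p_j$, so $\deg(V_{ij})=p_ip_j$ by B\'ezout.

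Putting these into Theorem \ref{p4}(b) (the map is dominant by hypothesis, which forces $m(\fa)\ge1$),
\[
tdg(\G_\fa)=m(\fa)=d^2+d+1-\sum_{i<j}\mu(\fa,V_{ij})\,\deg(V_{ij})=(P-2)^2+(P-2)+1-\sum_{i<j}p_ip_j .
\]
Since $(P-2)^2+(P-2)+1=P^2-3P+3$ and $P^2=\sum_jp_j^2+2\sum_{i<j}p_ip_j$, the right-hand side equals $\sum_jp_j^2+\sum_{i<j}p_ip_j-3\sum_jp_j+3$, which is formula (\ref{eq4}). The step I expect to be the main obstacle is (i) together with (ii): controlling $Sing_2(\fa)$ so that its only components are the pairwise intersections $V_i\cap V_j$, each with transverse multiplicity exactly one. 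This is where hypotheses 1 and 2, and the genericity hidden in the word ``generically'' in the conclusion, are genuinely used; the rest is B\'ezout and arithmetic.
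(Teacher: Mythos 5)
Your proof follows essentially the same route as the paper: identify $Sing_2(\fa)=\bigcup_{i<j}V_{ij}$ with $\mu(\fa,V_{ij})=1$ and $\deg(V_{ij})=p_ip_j$, then substitute into Theorem \ref{p4}(b) and simplify. The only difference is that you supply direct local arguments for steps (i) and (ii), where the paper delegates the structure of $Sing_2(\fa)$ to a citation of Calvo Andrade; your bookkeeping and the final arithmetic agree with the paper's.
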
 

In the statement, by generically transverse we mean that the set of tangencies between $V_i$ and $V_j$ has codimension $>2$.

As a consequence, we have:

\begin{cor}\label{c15}
Suppose that the Gauss map of the generic member of $\L(p_1,...,p_k;3)$ is birational. Then either $k=4$ and $p_1=...=p_4=1$, or $k=2$ and $(p_1,p_2)=(1,2)$.
\end{cor}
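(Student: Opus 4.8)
The plan is to convert the statement into an elementary Diophantine computation via Proposition \ref{p5}. Let $\fa$ be a generic member of $\L(p_1,\dots,p_k;3)$, defined by a logarithmic form $\om_\fa=f_1\cdots f_k\sum_{j=1}^k\la_j\,df_j/f_j$ with $\sum_j\la_jp_j=0$, and suppose $\G_\fa$ is birational. First I would check that for a generic choice of the $f_j$ the hypotheses of Proposition \ref{p5} are met: the hypersurfaces $(f_i=0)$ and $(f_j=0)$ are generically transverse (a Bertini-type genericity), and $\om_\fa$ has codimension two singular set, i.e.\ no zero divisor (a non-empty Zariski-open condition on the coefficients of the $f_j$). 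Since a birational map is dominant of topological degree one, formula (\ref{eq4}) of Proposition \ref{p5} then forces
\[
\sum_{j=1}^k p_j^2+\sum_{i<j}p_ip_j-3\sum_{j=1}^k p_j+3=1 .
\]

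Next I would introduce $s_1=\sum_{j=1}^k p_j$ and $s_2=\sum_{j=1}^k p_j^2$, and use the identity $\sum_{i<j}p_ip_j=\tfrac12(s_1^2-s_2)$ to rewrite the above equation as $s_1^2+s_2-6s_1+4=0$, that is,
\[
s_2=5-(s_1-3)^2 .
\]
Since each $p_j\ge1$ we have $s_2\ge s_1$; substituting the formula for $s_2$ yields $(s_1-1)(s_1-4)\le 0$, hence $s_1\in\{1,2,3,4\}$.

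Then I would run through these four values of $s_1$, listing the partitions of $s_1$ into parts $\ge1$ and keeping only those whose squares sum to $s_2=5-(s_1-3)^2$: for $s_1=1$ the only partition is $(1)$ with $s_2=1$, but $k=1$ is excluded (then $\la_1p_1=0$ makes $\om_\fa\equiv 0$); for $s_1=2$ one needs $s_2=4$, attained only by $(2)$, again $k=1$ and excluded; for $s_1=3$ one needs $s_2=5$, attained only by $(2,1)$, giving $k=2$, $(p_1,p_2)=(1,2)$; for $s_1=4$ one needs $s_2=4$, attained only by $(1,1,1,1)$, giving $k=4$, $p_1=\dots=p_4=1$. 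These are precisely the two asserted cases. I would close by noting that both do occur — they are the foliations of Example \ref{ex3} and of the component $\L(2,1;n)$ in Example \ref{ex2}, specialized to $n=3$, whose Gauss maps were computed there to be birational — so the list is sharp.

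I expect the only non-routine point to be the verification underlying the application of Proposition \ref{p5}, namely that a generic logarithmic form of the prescribed multidegree has no codimension-one component in its singular locus (and that the $(f_j=0)$ are generically transverse); once this is granted, the rest is the short arithmetic enumeration above.
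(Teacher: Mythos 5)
Your proof is correct and follows the paper's route: both apply Proposition \ref{p5} to a generic member (where the transversality and no-zero-divisor hypotheses hold), set the topological degree formula equal to $1$, and solve the resulting Diophantine equation. The only difference is in the final enumeration — the paper bounds $k\le 4$ by showing $h_k$ is increasing in each variable on the ordered simplex and then checks small $k$, while you bound $s_1=\sum p_j\le 4$ via the power-sum identity and the inequality $s_2\ge s_1$ and then list partitions; both are valid elementary arguments yielding the same two solutions.
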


\begin{rem}
\rm We would like to observe that a foliation $\fa\in\L(1,1,1,1;3)$ with Gauss map birational can be defined in some homogeneous coordinate system $(x,y,z,t)\in\C^4$ by the logarithmic form
\[
\te=\a\frac{dx}{x}+\be\frac{dy}{y}+\g\frac{dz}{z}+\d\frac{dt}{t}\,,
\]
whereas if $(p_1,p_2)=(1,2)$ then the foliation has a rational first integral of the form $Q/L^2$ where $deg(Q)=2$ and $deg(L)=1$.

Let us prove the first assertion. Since $p_j=1$, $1\le j\le4$, the foliation can be defined in homogeneous coordinates by
\[
\te=\a\frac{d\ell_1}{\ell_1}+\be\frac{d\ell_2}{\ell_2}+\g\frac{d\ell_3}{\ell_3}+\d\frac{d\ell_4}{\ell_4}
\]
where $\ell_j\in\C[z_1,z_2,z_3,z_4]$ has degree one, $1\le j\le4$.
Since the Gauss map is dominant we must have
\[
d\ell_1\wedge d\ell_2\wedge d\ell_3\wedge d\ell_4=c.dz_1\wedge dz_2\wedge dz_3\wedge dz_4\,,
\]
where $c\in\C^*$. This implies that $\ell:=(\ell_1,\ell_2,\ell_3,\ell_4)\colon\C^4\to\C^4$ is an automorphism, so in some coordinate system $(x,y,z,t)\in\C^4$, we have $(\ell_1,\ell_2,\ell_3,\ell_4)=(x,y,z,t)$.
\end{rem}

\begin{ex}\label{exdg3}
\rm As an application of Theorem \ref{p4} and of proposition \ref{p5}, we show below that 
$\n(3,3)=\{0,1,...,8,9\}.$ 
\\
1. In the case $tdg(\G_\fa)=1$ we take the foliation with first integral $F_2(x,y,z,t)=\frac{(xz+yt)y^2+x^4}{y^4}$, like in remark \ref{ex5}. As we have seen, this foliation has degree three and its Gauss map is birational.\\
2. The reader can verify directly using (\ref{eq4}) of proposition \ref{p5} that a generic foliation in $\L(1,1,1,1,1;3)$ has topological degree three.\\
3. A generic foliation in $\L(2,1,1,1;3)$ has topological degree four.\\
4. A generic foliation in $\L(2,2,1;3)$ has topological degree five.\\
5. A generic foliation in $\L(3,1,1;3)$ has topological degree six.\\
6. A generic foliation in $\L(3,2;3)$ has topological degree seven.\\
7. A generic foliation in $\L(4,1;3)$ has topological degree nine.\\
8. The case of $tdg(\G_\fa)=2$ can be realized by a foliation with first integral $f(x,y,z,t)=[z^3t+(yt-x^2)^2]/t^4$.
This foliation is represented by the form
\[
\om=4xt(ty+x^2)dx+2t^2(ty+x^2)dy+3z^2t^2\,dz-[3z^3t+2(ty+x^2)(ty+2x^2)]dt.
\]
Here, we have $Sing_2(\fa)=V_1\cup V_2$, $V_1=(x=t=0)$ and $V_2=(z=yt+x^2=0)$, $\mu(\fa,V_1)=7$ and $\mu(\fa,V_2)=2$, so that $\sum_j\mu(\fa,V_j)deg(V_j)=11$ and $tdg(\G_\fa)=13-11=2$.\\
9. The case $tdg(\G_\fa)=8$ is more involved. Let
\[
Q(x,y,z,t)=t^2(x^2+y^2+z^2)+t(x^3+y^3+z^3)+x^2(x^2+yz)\,.
\]

The foliation $\fa$ with first integral $F=Q/t^4$ has $tdg(\G_\fa)=8$. Let us prove this fact.

First of all, the foliation is represented in homogeneous coordinates by $\om=tdQ-4Qdt$ and in the affine system $(t=1)$ by $dQ|_{(t=1)}:=Adx+Bdy+Cdz$, where
\[
A=x(2+3x+4x^2+2yz)\,\,,\,\,B=2y+3y^2+x^2z\,\,\text{and}\,\,C=2z+3z^2+x^2y.
\]
This implies that $Sing_2(\fa)=(t=x=0)\cup(t=x^2+xy=0)$, because $dim(A=B=C=0)=0$.
We have done this computation and we have found 14 solutions for the system.

Let $V_1=(t=x=0)$ and $V_2=(t=x^2+xy=0)$. We assert that $\mu(\fa,V_1)=3$ and $\mu(\fa,V_2)=1$.
Let $\Si=(y=z=1)$. Note that $\Si$ is transverse to both varieties $V_1$ and $V_2$. 
Moreover,
\[
F|_\Si=\frac{t^2(x^2+2)+t(x^3+2)+x^2(x^2+1)}{t^4}\,\implies
\]
\[
\om|_\Si=tx(2+4x^2+3xt+2t^2)dx-(2t^2(x^2+2)+3t(x^3+2)+4x^2(x^2+1))dt
\]
\[
:=tx\,M(x,t)dx+N(x,t)dt.
\]
We have $\Si\cap V_1=(0,0)=0$ and $\Si\cap V_2=(t=0,x=i)\cup(t=0,x=-i)=\{p,q\}$. Since $M(0,0)\ne0$, we have
\[
\mu(\fa,V_1)=\mu_0(tx\,M,N)=\mu_0(t,x^2)+\mu_0(x,t)=2+1=3\,.
\]
Similarly, it can be checked that $\mu(\fa,V_2)=\mu_p(tx\,M,N)=\mu_q(tx\,M,N)=1$, and so $\sum_j\mu(\fa,V_j)deg(V_j)=5$.
This implies that $tdg(\G_\fa)=13-5=8$.
\end{ex}

\begin{rem}
\rm From a foliation $\fa$ on $\p^3$, of degree $d$, whose Gauss map $\G_\fa$ is birational, we can produce a family of rational curves on $\p^3$ of degree $d+1$ as we explain next.
Let
\[
\L=\{\ell\in G_1(\p^3)\,|\,\ell\cap Sing(\fa)=\emp\,\,\text{and $\ell$ has $d$ tangencies with $\fa\}$}\,,
\]
where $G_1(\p^3)$ denotes the Grassmannian of lines of $\p^3$.
Note that $\L$ is a Zariski open and dense subset of $G_1(\p^3)$.

Fix $\ell\in\L$ and denote as $\P_\ell$ the pencil of planes of $\p^3$ that contain the line $\ell$. 
Now, we observe the following facts:\\
1. A plane $P\in\P_\ell$, transverse to $Sing(\fa)$, has exactly one Morse tangency with $\fa$, because the Gauss map is birational.
This Morse tangency is in fact $\G_\fa^{-1}(P)$, which we will denote as $\M_\ell(P)$.\\
2. Let $T=\{P\in\P_\ell\,|\,\,P$ is not transverse to $Sing(\fa)\}$. Then, $T$ is finite.\\
3. The set $\P_\ell\setminus T$ is isomorphic to the complement of a finite subset of $\p^1$.\\
4. The map $\M_\ell\colon \P_\ell\setminus T\to \p^3$ is algebraic.

In particular, if we fix $\ell\in\L$ then the map $\M_\ell$ extends analytically to $\P_\ell$, so that the image $C_\ell:=\M_\ell(\P_\ell)$ is a rational curve of $\breve\p^3$.

It remains to prove that $C_\ell$ has degree $d+1$. Let us fix a point of tangency between $\ell$ and $\fa$, say $p\in\ell$.
Let $P=\G_\fa(p)\in\breve\p^3$.

\begin{claim}
The plane $P=\G_\fa(p)$ is in the pencil $\P_\ell$. In particular, $\ell\sub P$.
Moreover, $\M_\ell(P)=p$ and the curve $C_\ell$ is tangent to $P$ with tangent space $T_pC_\ell$ transverse to $T_p\ell$ (in the plane $P$).
\end{claim}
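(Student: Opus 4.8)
The plan is to verify in turn the three assertions packaged in the claim: that $P=\G_\fa(p)$ contains $\ell$, that $\M_\ell(P)=p$, and that $C_\ell$ is tangent to $P$ at $p$ with $T_pC_\ell\neq T_p\ell$. The only ingredients needed are the definition of the Gauss map, the birationality of $\G_\fa$, and one local computation of $D\G_\fa$ near $p$ with a first integral of $\fa$. First, since $p$ is a point of tangency of $\ell$ with $\fa$, one has $T_p\ell\subset\de_\fa(p)$, and by definition $P=\G_\fa(p)$ is the hyperplane through $p$ with $T_pP=\de_\fa(p)$; hence $T_p\ell\subset T_pP$, and a projective line through a point $p$ of a plane $P$ whose tangent direction at $p$ lies in $T_pP$ is contained in $P$, so $\ell\subset P$ and $P\in\P_\ell$. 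Next, to get $\M_\ell(P)=p$, I would take $\ell$ generic in $\L$, so that $p$ is a Morse tangency of $\fa$ and $P\notin T$; then $D\G_\fa(p)$ is an isomorphism by Remark~\ref{r12}, so $\G_\fa$ is a local biholomorphism at $p$, and since $\G_\fa$ is birational its inverse is defined near $P$ with $\G_\fa^{-1}(P)=p$. As for $P\notin T$ the value $\M_\ell(P)$ is the unique Morse tangency lying on $P$, namely $\G_\fa^{-1}(P)$, we get $\M_\ell(P)=p$, hence $p\in C_\ell$.

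For the third assertion I would get the tangency $T_pC_\ell\subset P$ cheaply and the transversality by an explicit computation. Parametrize $\P_\ell$ near $P$ by $s\mapsto P_s=(\Phi+s\Psi=0)$ in homogeneous coordinates, with $\Phi,\Psi$ linear, $P=(\Phi=0)$, $\ell=(\Phi=\Psi=0)$, and let $\wt q(s)\in\C^4$ be a local analytic lift of $s\mapsto\M_\ell(P_s)$, so $\wt q(0)$ lifts $p$ and $(\Phi+s\Psi)(\wt q(s))\equiv0$; differentiating at $s=0$ and using $p\in\ell\subset(\Psi=0)$ gives $\Phi(\wt q'(0))=0$, i.e. $T_pC_\ell\subset P$. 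For the transversality, choose affine coordinates $(z_1,z_2,z_3)$ on $\p^3$ with $p=0$, $P=(z_3=0)$ and, using $\ell\subset P$, $\ell=(z_2=z_3=0)$, and take a local first integral $F$ of $\fa$ at $p$ normalized so that $\pa_3F(0)=1$ (whence $\pa_1F(0)=\pa_2F(0)=0$, since $dF_0\parallel dz_3$). In the affine chart $(n_1,n_2,a)\mapsto(n_1z_1+n_2z_2+z_3=a)$ of $\breve\p^3$ near $P$ one has $\G_\fa(z)=\bigl(\pa_1F/\pa_3F,\,\pa_2F/\pa_3F,\,z_3+n_1z_1+n_2z_2\bigr)$, so with $F_{ij}:=\pa_i\pa_jF(0)$,
\[
D\G_\fa(0)=\begin{pmatrix}F_{11}&F_{12}&F_{13}\\ F_{21}&F_{22}&F_{23}\\ 0&0&1\end{pmatrix},
\]
which is invertible precisely because $p$ is a Morse tangency: the non-degeneracy of the Hessian at $p$ of the leaf through $p$, written as a graph $z_3=\phi(z_1,z_2)$ over $P$, amounts to $\Delta:=F_{11}F_{22}-F_{12}^2\neq0$. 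Since $\G_\fa$ is a local biholomorphism at $p$, near $p$ one has $C_\ell=\G_\fa^{-1}(\P_\ell)$, so $T_pC_\ell=D\G_\fa(p)^{-1}(T_P\P_\ell)$; here $\P_\ell=(n_1=a=0)$, hence $T_P\P_\ell=\C\,\pa_{n_2}$, and solving $D\G_\fa(0)\,w=(0,1,0)$ gives $T_pC_\ell=\C(-F_{12}(0),F_{11}(0),0)$ — consistent with $T_pC_\ell\subset P$ — which coincides with $T_p\ell=\C(1,0,0)$ if and only if $F_{11}(0)=0$. Finally $F_{11}(0)\neq0$: the tangency points of $\ell$ with $\fa$ are the zeros along $\ell$ of $\tau\mapsto\pa_1F(\tau,0,0)$, a function vanishing at $\tau=0$ with derivative $F_{11}(0)$, and since the degree of $\fa$ counts with multiplicity the tangencies along a line avoiding $Sing(\fa)$, the hypothesis $\ell\in\L$ — i.e. $d=\deg\fa$ distinct tangency points — forces every one of them, in particular that at $\tau=0$, to be simple. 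This proves the claim.

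The step carrying the real content is the transversality assertion: it requires the explicit form of $D\G_\fa(p)$ above and, for the statement to be meaningful at all, the facts that $p$ is a genuine Morse tangency (so $C_\ell$ is smooth at $p$ and $\Delta\neq0$) and that $\ell$ is simply tangent to $\fa$ at $p$; this is what forces the passage to a generic $\ell\in\L$. Granting the claim, one concludes the surrounding computation as follows: applied to the $d$ tangency points $p_1,\dots,p_d$ of a generic $\ell$ it shows that $C_\ell$ passes through every $p_i$, and a generic member $Q$ of $\P_\ell$ then meets $C_\ell$ transversally at each $p_i$ — because $T_{p_i}C_\ell\subset P_i:=\G_\fa(p_i)$ while $T_{p_i}C_\ell\neq\ell=P_i\cap Q$ — and at the single further point $\M_\ell(Q)$, any other point of $C_\ell\cap Q$ being forced onto $Q\cap Q'=\ell$ for some $Q'\neq Q$ and hence among the $p_i$; therefore $\deg C_\ell=d+1$.
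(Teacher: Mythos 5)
Your argument is correct and reaches the same conclusion as the paper, but the computation is organized differently. The paper fixes the local first integral $f=z+q(x,y,z)+h.o.t.$, normalizes $q(x,y,0)=x^2+y^2$, restricts $f$ to each member $z=\alpha y$ of the pencil, and locates the Morse critical point of the restriction by the implicit function theorem, obtaining a parametrization $\alpha\mapsto(o(\alpha^2),-\alpha/2+o(\alpha^2),-\alpha^2/2+o(\alpha^3))$ of $C_\ell$ whose tangent is transverse to $\ell$ in $P$. You instead compute the full $3\times3$ matrix $D\G_\fa(p)$ and use $T_pC_\ell=D\G_\fa(p)^{-1}(T_P\P_\ell)$; the two routes are equivalent in substance (the implicit function theorem applied to the critical-point equations is precisely the inversion of the relevant block of $D\G_\fa(p)$), and your answer $\C\,(-F_{12},F_{11},0)$ specializes to the paper's $(0,1,0)$ under its normalization. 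Your write-up is more careful at two points: you actually prove $\M_\ell(P)=p$ and $T_pC_\ell\subset P$ rather than taking them as evident, and, more substantively, you isolate the condition $F_{11}(0)\neq0$ and justify it by the simplicity of the tangency of $\ell$ with $\fa$ at $p$, which is forced by $\ell\in\L$ having $d$ distinct tangency points. The paper's normalization $q(x,y,0)=x^2+y^2$ by a linear change of variables preserving $\ell=(y=z=0)$ silently requires exactly this hypothesis (the coefficient of $x^2$ in $q(x,y,0)$ must be nonzero, otherwise no such change of variables exists and the tangent of $C_\ell$ at $p$ would coincide with $T_p\ell$), so your version makes explicit a small gap that the paper leaves implicit.
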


{\it Proof.}
Since $\ell$ and $P$ are tangent to $\fa$ at $p$, and $p$ is a Morse tangency, it is clear that
$\ell\sub P$. Let $f\in\O_p$ be a local first integral of $\fa$ near $p$, with $f(p)=0$. Choose an affine coordinate system $(x,y,z)\in\C^3$ such that $P=(z=0)$, $\ell=(y=z=0)$, and $p=(0,0,0)$. Since $f$ is a first integral of $\fa$, and $p$ is the Morse tangency of $P$ with $\fa$, we can assume that
$f(x,y,z)=z+q(x,y,z)+h.o.t$, where $q(x,y,0)$ is a non degenerate quadratic form and $h.o.t$ are terms of order $\ge3$.

After a linear change of variables, we can assume that $q(x,y,0)=x^2+y^2$, so that $f(x,y,z)=z+x^2+y^2+axz+byz+cz^2+h.o.t$.
Since $\ell=(y=z=0)$, the pencil $\P_\ell$ can be parametrized near $z=0$ as $\a\in\C\mapsto z=\a\,y$, so that
\[
f|_{z=\a y}(x,y,\a y)=\a y+x^2+y^2+a\a xy+b\a y^2+c\a^2 y^2+h.o.t
\]
This implies that $\M_\ell(z=\a y)$ can be calculated as the zeroes of 
\[
F(x,y,\a):=\left(\frac{\pa}{\pa x}f(x,y,\a y),\frac{\pa}{\pa y}f(x,y,\a y)\right)=
\]
\[
=\left(2x+a\a y,\a+a\a x+2(1+b\a+c\a^2)y\right)+h.o.t
\]

Using the implicit function theorem, we can solve $F(x,y,\a)=(0,0)$ as 
\[
\a\mapsto (X(\a),Y(\a))=\big(o(\a),-\frac{\a}{2}+o(\a)\big)\,.
\]
Since we are in the pencil $z=\a y$, we get
\[
\M_\ell(z=\a y)=(o(\a),-\frac{\a}{2}+o(\a),-\frac{\a^2}{2}+o(\a^2))\,\implies
\]
$T_pC_\ell=(x=z=0)$ which is transverse to $T_p\ell$, in the plane $P$.\qed
\vskip.1in
Now, fix another point $q$ of tangency between $\ell$ and $\fa$. Note that $\ell\sub\G_\fa(q)\ne\G_\fa(p)$.
Since $T_qC_\ell$ is contained in $\G_\fa(q)$ and transverse to $\ell$, we conclude that $T_qC_\ell$ is transverse to $P=\G_\fa(p)$.
This implies that $C_\ell$ cuts $P$ exactly in the $d$ points of tangency of $\ell$ with $\fa$, but in just one of these points it is not transverse to $P$. Therefore, the degree of $C_\ell$ is $d+1$.
\end{rem}

Finally, we will comment something about the degree of the Gauss map. Let $G\colon\p^n\dasharrow\p^n$, $G[z]=[G_0[z],...,G_n[z]]$ be a birational map on $\p^n$. The degree of $G$, $deg(G)$, is by definition the degree of the polynomials $G_j$. The bidegree of $G$ is the pair $(deg(G),deg(G^{-1}))$. If $n=2$ it is known that $deg(G)=deg(G^{-1})$ (see \cite{sc}), but in dimension $n\ge3$ this is not true in general (see \cite{pan}).

For the case of foliations $\fa \in \fol(d;n)$, a corollary of Theorem 1 of \cite{tj} says that if $n=3$ and the Gauss map $\G_\fa$ is birational then: $deg(\G_\fa)=deg(\G_\fa^{-1})$.
 In a personal communication, Fassarella has shown us the following example of a foliation in $\fol(2;4)$ with birational Gauss map and such that the degree of the inverse is not the same.
 
 Let $f(x,y,z,y,w)=z^3+yzt+xt^2+t^3$. Consider the foliation $\mathcal{F}$ of degree $2$ on  $\mathbb{P}^4$ given by the 1-form:

\begin{align*}
\omega=wdf-3fdw&=t^2wdx+ztwdy+w(yt+3z^2)dz+w(2xt+yz+3t^2)dt\\
&-3(z^3+yzt+xt^2+t^3)dw\\
&=Adx+Bdy+Cdz+Ddt+Edw.
\end{align*}

Its Gauss map is:

\begin{align*}
\mathcal{G}_{\mathcal{F}}: \mathbb{P}^4 &\dashrightarrow \mathbb{P}^4\\
(x:y:z:t:w) &\mapsto (A:B:C:D:E),
\end{align*}

\noindent which has degree $3$. We can see that $\mathcal{G}_{\mathcal{F}}$ is birational with inverse:

\begin{align*}
\mathcal{G}_{\mathcal{F}}^{-1}: \mathbb{P}^4 &\dashrightarrow \mathbb{P}^4\\
(A:B:C:D:E) &\mapsto \Big(E\big(DA^2-B(CA-3B^2)-3A^3\big): 2EA(CA-3B^2):\\
&\hspace{2cm} 2A^2BE:2A^3E:3A(B^3-ABC+A^3-A^2D)\Big),
\end{align*}

\noindent which has degree $4$. 

\subsection{Problems}

Here we state some problems that arise naturally from the results and examples discussed above.

\begin{prob}
\rm   Among the foliations $\fa \in \fol(d;n)$, for $n \geq 4$, whose Gauss map  $\G_\fa$ is birational, find properties of those for which 
$deg(\G_\fa) \neq deg(\G_\fa^{-1})$.
\end{prob}

\begin{prob}
    \rm When the Gauss map of a foliation is not birational, then the image of the foliation by its Gauss map defines a $k$-web, where $k$ is the topological degree of the foliation. Then, the problem would be to find a relationship between the degree of the foliation and the degree of the $k$-web that its Gauss map defines. 
\end{prob}

A good reference for the theory of k-webs is the book \cite{pereira-pirio}. Here it is appropriate to include the following comment, which has been suggested by the referee:
If $\fa$ is an arbitrary codimension one foliation on $\p^n$ then $(\G_{\fa})_* \fa$ is the dual web or Legendre transform of $\fa$ on $\breve\p^n$.
\\

We have seen in remark \ref{rem15} that the hypothesis of corollary \ref{c12} on foliations of $\p^n$ is not necessary for a Gauss map to be birational, although it is sufficient. 

\begin{prob}
\rm Are there conditions on a foliation of $\p^n$, $n\ge4$, that are necessary and sufficient for the Gauss map to be birational?
\end{prob}

A related question is:

\begin{prob}
\rm Classify all the foliations of degree two on $\p^n$, $n\ge3$, for which the Gauss map is birational. It is reasonable to start with this classification for $n=3$ (see section 4.2, in particular).
\end{prob}

\begin{prob}
\rm Are there other magic examples (as in example \ref{exAB}), that is, are there other rational maps $\Phi: \mathbb{P}^3 \dasharrow \mathbb{P}^2$, such that the pull-back by $\Phi$ of every foliation on $\mathbb{P}^2$ has birational Gauss map? 
\end{prob}

We have seen in Theorem \ref{pM} that a foliation $\fa$ of degree $d$ of $\p^3$ such that $tdg(\G_\fa)=d^2$ has a first integral. Then we can ask:

\begin{prob}
\rm Let $\fa\in\fol(d;3)$ such that $tdg(\G_\fa)=d^2-1$. Is it true that $\fa$ has a rational first integral?

\end{prob}

\begin{prob} A foliation $\fa \in\fol(d;n)$ where $n\geq 4$, and such that $tdg(\G_\fa)=d^{n-1}$ has a rational first integral?
\end{prob}

In corollary \ref{c15} we have classified the generic logarithmic foliations on $\p^3$ whose Gauss map is birational.

\begin{prob}
\rm Is there such a classification in the case of $\p^n$, $n\ge4$?
\end{prob}
 
We have seen in examples \ref{dg2} and \ref{exdg3} that if $d=2$ or $d=3$ then $\n(d,3)=\{0,1,...,d^2\}$.
A natural problem is the following.

\begin{prob}
\rm Is $\n(d,3)=\{0,1,2,...,d^2\}$ for all $d\ge4$?
\end{prob}

\section{Proofs}\label{ss2}

\subsection{Proof of Theorem \ref{pM}}\label{sspM}
Let $\fa$ be a foliation on $\p^2$, defined in some affine chart $\C^2\sub\p^2$ by a polynomial vector field $X$.
Let $p\in Sing(\fa)\cap\C^2$, so that $X(p)=0$. We say that $p$ is a non-degenerate singularity if $det(DX(p))\ne0$.
We say that $p$ is a singularity with trace zero if $tr(DX(p))=0$.

The proof of Theorem \ref{pM} is based on the following:
\begin{prop}\label{pr2}
\rm A foliation on $\p^2$ of degree $d$ has at most $d^2$ non-degenerate singularities with trace zero.
Moreover, if $\fa\in \mathbb{F}ol(d;2)$ has $d^2$ non-degenerate singularities with trace zero then it has a first integral of the form $F/L^{d+1}$ where $deg(F)=d+1$ and $deg(L)=1$.
\end{prop}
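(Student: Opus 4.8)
The plan is to analyze a degree $d$ foliation $\fa$ on $\p^2$ via a polynomial vector field $X = P\,\partial_x + Q\,\partial_y$ in an affine chart, keeping careful track of the behavior at the line at infinity. A generic degree $d$ foliation has exactly $d^2+d+1$ singularities (counted with multiplicity, via the appropriate formula; equivalently, this is the number of zeros of the extended vector field on $\p^2$), of which, for a foliation in general position, $d+1$ lie on a chosen generic line. So after moving the line at infinity to be generic, $\fa$ has $d^2+d+1 - (d+1) = d^2$ singularities in $\C^2$, and a priori all $d^2$ could be non-degenerate with trace zero, giving the upper bound. The first step is thus to make this count rigorous: show that no degree $d$ foliation can have more than $d^2$ non-degenerate trace-zero singularities. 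I would do this by arguing that the trace-zero condition $\mathrm{tr}(DX)=P_x+Q_y=0$ cuts out a divisor, and that a non-degenerate trace-zero singularity forces the singularity to lie on the curve $\{P_x+Q_y=0\}$, which has degree $\le d-1$ in the affine chart; combined with the global count of singularities and a Bézout-type argument, bounding the affine singularities on this curve by $d^2$. (Alternatively, invoke the index/trace formulas for foliations on $\p^2$ directly.)

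For the rigidity part — that attaining $d^2$ non-degenerate trace-zero singularities forces a first integral $F/L^{d+1}$ — the key idea is that equality in the count means the curve $\{P_x + Q_y = 0\}$ must be forced to a very special position: it should degenerate so that, in homogeneous coordinates, the "extra" trace contribution concentrates entirely along one line $L$. Concretely, I would homogenize: write the foliation on $\p^2$ in homogeneous coordinates by a $1$-form $\eta = A\,dx + B\,dy + C\,dz$ with $i_R\eta = 0$, and translate the trace-zero condition at each affine singularity into a statement about the divergence of the dual vector field. The equality case should then say that the $1$-form $\omega = \eta$ is, up to a factor, the logarithmic differential $\frac{dF}{F} - (d+1)\frac{dL}{L}$ of the claimed first integral — i.e. $\omega$ has a closed meromorphic form $\frac{dG}{G}$ with $G = F/L^{d+1}$ as primitive. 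To produce $F$ and $L$: the line $L$ should emerge as the unique line supporting the divergence divisor, and $F$ as the degree $d+1$ curve obtained as the "remaining" part; verifying $d(F/L^{d+1}) \wedge \omega = 0$ is then a computation.

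The main obstacle I anticipate is the rigidity direction: showing that maximality of the number of non-degenerate trace-zero singularities is rigid enough to force not merely some geometric degeneration but the existence of a global rational first integral of the precise shape $F/L^{d+1}$. The natural route is to show that the $1$-form defining $\fa$ must be closed after multiplication by a suitable meromorphic function, i.e. that $\fa$ is a "Darboux" or logarithmic foliation of the right type, and this typically requires an argument controlling the index of $\fa$ along each of its invariant curves (a Baum–Bott / Camacho–Sad type relation) to pin down the multiplicities $1$ and $d+1$. I would structure the argument as: (i) equality implies the trace divisor is supported on a single line $L$ with full multiplicity; (ii) $L$ is $\fa$-invariant; (iii) the residue of $\frac{dG}{G}$ along $L$ is $-(d+1)$, forcing $\fa$ to admit $F/L^{d+1}$ as first integral after identifying $F = \{G L^{d+1}\text{-numerator}\}$ as a genuine degree $d+1$ polynomial. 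Once the foliation is known to be logarithmic with these two hypersurfaces, the stated form of the first integral is immediate. Finally, I would remark that this proposition feeds directly into Theorem \ref{pM}: a foliation on $\p^3$ with $tdg(\G_\fa)=d^2$ restricts to a generic plane $\Sigma \simeq \p^2$ as a degree $d$ foliation with $d^2$ non-degenerate trace-zero singularities (the tangency points being exactly the $\G_\fa$-preimage fiber), so Proposition \ref{pr2} applies on $\Sigma$ and one then globalizes the first integral $F/L^{d+1}$ from $\Sigma$ back to $\p^3$.
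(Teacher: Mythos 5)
Your overall shape (bound the trace--zero singularities by intersecting with the divergence curve, then show maximality forces the defining form to be closed/logarithmic of the right type) matches the paper's strategy, but several of your concrete assertions are wrong or missing, and the two steps that actually carry the proof are not present. First, the degree bookkeeping in the bound: you cannot in general choose the affine chart so that the line at infinity is invariant (think of foliations with no algebraic leaf), so the paper works in a chart where $\ell_\infty$ is \emph{not} invariant. There $X=P\pa_x+Q\pa_y$ with $P=p+xg$, $Q=q+yg$, $g$ homogeneous of degree $d$, and the divergence $\De=P_x+Q_y=p_x+q_y+(d+2)g$ has degree exactly $d$, not $\le d-1$ as you claim. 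The correct count is B\'ezout on $(P=0)\cap(\De=0)$ with degrees $d+1$ and $d$, giving $d^2+d$ points of which exactly $d$ sit on $\ell_\infty$ (the linear factors of $g$), leaving at most $d^2$ in $\C^2$; your degrees would give $d^2-d$, too few.

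Second, your rigidity step (i) --- ``equality implies the trace divisor is supported on a single line $L$ with full multiplicity'' --- is false: $(\De=0)$ is a genuine degree-$d$ curve passing through all $d^2$ points. The line $L$ arises instead from a Noether-type division: since $(Q=0)$ passes through the transverse complete intersection $(P=0)\cap(\De=0)$ and meets $\ell_\infty$ correctly, one gets $Q=\a P+f\De$ with $\deg f=1$, and $L=\ov{(f=0)}$ is invariant because it carries the remaining $d+1$ singularities. Third, the decisive closing argument is missing: after moving $L$ to infinity, the foliation is given by $\eta=A\,dx+B\,dy$ with $\deg A=\deg B=d$, and $G=B_x-A_y$ (degree $<d$) vanishes at the $d^2$ points, which form a transverse complete intersection of $(A=0)$ and $(B=0)$; hence $G\in\left<A,B\right>$, which forces $G=0$ by degree, so $\eta=dF$ with $\deg F=d+1$. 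You replace this elementary argument with an un-executed appeal to Baum--Bott/Camacho--Sad indices, which you yourself flag as the main obstacle; as written, the proposal does not prove the proposition.
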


{\it Proof.}
Let $\fa$ be a foliation of degree $d$ on $\p^2$, having $d^2$ non-degenerate singularities with trace zero. Let $\C^2\sub\p^2$ be an affine coordinate system such that $Sing(\fa)\sub\C^2$. In this case, the line at infinity $\ell_\infty=\p^2\setminus\C^2$ is not $\fa$-invariant.

Note that $\fa$ can be defined in these coordinates by a polynomial vector field $X=P(x,y)\frac{\pa}{\pa x}+Q(x,y)\frac{\pa}{\pa y}$, or equivalently by the form $\om=P(x,y)\,dy-Q(x,y)\,dx$, where:\\
1. $P(x,y)=p(x,y)+x\,g(x,y)$ and $Q(x,y)=q(x,y)+y\,g(x,y)$.\\
2. $max(deg(p),deg(q))=d$ and $g$ is homogeneous of degre $d$.  

We can take the polynomials like this because the line $\ell_\infty=(z=0)$ is not $\fa$-invariant, then $g\ne0$ and $z \nmid g(x,y)$; therefore we can consider the homogeneous part of $g$ of degree d.
The condition $max(deg(p),deg(q))=d$,  comes from the fact that $\ell_\infty$ does not contain singularities of $\fa$, then $z \nmid p(x,y)$ or $z \nmid q(x,y)$.
\\

Let $p_1,...,p_{d^2}$ be the singularities, with linear part $L_1,...,L_{d^2}$ respectively, where $det(L_i)\ne0$ and $tr(L_i)=0$, $1\le i\le d^2$.

Note that
\[
tr(L_i)=\frac{\pa P}{\pa x}(p_i)+\frac{\pa Q}{\pa y}(p_i)=0
\]
and $d\om=\De\,dx\wedge dy$, where
\[
\De=\frac{\pa P}{\pa x}+\frac{\pa Q}{\pa y}
\]
so that $d\om(p_i)=0$, $1\le i\le d^2$.
A straightforward computation gives
\[
\De=\frac{\pa p}{\pa x}+\frac{\pa q}{\pa y}+2g+x\frac{\pa g}{\pa x}+y\frac{\pa g}{\pa y}=\frac{\pa p}{\pa x}+\frac{\pa q}{\pa y}+(d+2)g\,,
\]
because $g$ is homogeneous of degree $d$.

\begin{claim}
We assert that $(P=0)$ and $(\De=0)$ (resp. $(Q=0)$ and $(\De=0)$) meet transversely in $\C^2$ exactly at the points $p_1,...,p_{d^2}$.
\end{claim}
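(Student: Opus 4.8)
The plan is to read off the Claim from Bézout's theorem applied (twice) to the plane curves in $\p^2$ cut out by the homogenizations $\tilde P,\tilde Q,\tilde\De$ of $P,Q,\De$. Work in homogeneous coordinates $[X:Y:Z]$ with $\ell_\infty=(Z=0)$. First I would record the degrees and the behavior at infinity: since $g\ne0$, the degree $d+1$ part of $P=p+xg$ is $xg$ and the degree $d$ part of $\De=p_x+q_y+(d+2)g$ is $(d+2)g$; hence $\deg\tilde P=\deg\tilde Q=d+1$, $\deg\tilde\De=d$, and
\[
\tilde P|_{\ell_\infty}=Xg(X,Y)\,,\qquad \tilde Q|_{\ell_\infty}=Yg(X,Y)\,,\qquad \tilde\De|_{\ell_\infty}=(d+2)\,g(X,Y)\,.
\]
In particular the homogeneous polynomial $X\tilde\De-(d+2)\tilde P$ of degree $d+1$ vanishes identically on $\ell_\infty$, so it equals $ZH$ for some homogeneous $H$ of degree $d$; likewise $Y\tilde\De-(d+2)\tilde Q=ZH'$.

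Next I would bound the intersection number carried by $\ell_\infty$. From $(d+2)\tilde P=X\tilde\De-ZH$ one gets, at every $r\in\ell_\infty$,
\[
I_r(\tilde P,\tilde\De)=I_r(ZH,\tilde\De)=I_r(Z,\tilde\De)+I_r(H,\tilde\De)\ge I_r(Z,\tilde\De)=I_r(\ell_\infty,\tilde\De)\,,
\]
and summing over $\ell_\infty$, together with Bézout for the line $\ell_\infty$ and the degree $d$ curve $(\tilde\De=0)$, gives $\sum_{r\in\ell_\infty}I_r(\tilde P,\tilde\De)\ge d$. Since the total intersection number is $\deg\tilde P\cdot\deg\tilde\De=d^2+d$, the part of it supported on $\C^2$ is at most $d^2$. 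But each of the $d^2$ distinct points $p_i$ lies in $(\tilde P=0)\cap(\tilde\De=0)\cap\C^2$, because $P(p_i)=0$ and $\De(p_i)=tr(L_i)=0$, so it contributes at least $1$; hence there are exactly $d^2$ such points, each with local intersection number $1$, i.e. $(P=0)$ and $(\De=0)$ meet transversely and precisely at $p_1,\dots,p_{d^2}$. The statement for $(Q=0)$ and $(\De=0)$ follows in the same way, using $Y\tilde\De-(d+2)\tilde Q=ZH'$.

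The delicate point, and the one I expect to be the main obstacle, is the legitimacy of the two Bézout applications: one needs $\tilde P$ (resp. $\tilde Q$) to share no irreducible component with $\tilde\De$, otherwise these intersection numbers are infinite and the whole count collapses. This is not automatic for an arbitrary affine chart, but since $Sing(\fa)$ is finite there is enough room in the choice of $\ell_\infty$ and of the affine coordinates to force it; I would make this genericity reduction at the outset, noting that the conclusion of Proposition \ref{pr2} is coordinate independent. Coprimality of $\tilde P$ and $\tilde\De$ also yields, via $(d+2)\tilde P=X\tilde\De-ZH$, that $\tilde\De$ is coprime to $H$ (and to $H'$), which is exactly what legitimizes the additivity $I_r(ZH,\tilde\De)=I_r(Z,\tilde\De)+I_r(H,\tilde\De)$ used above.
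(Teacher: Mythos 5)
Your proof is correct and follows essentially the same route as the paper's: a B\'ezout count of $(\tilde P=0)\cap(\tilde\De=0)$ in $\p^2$, with total intersection number $(d+1)d$, at least $d$ absorbed on $\ell_\infty$ (the paper attributes these to the $d$ linear factors of $g$, which you recover via the identity $X\tilde\De-(d+2)\tilde P=ZH$), leaving at most $d^2$ for the affine plane and forcing transversality at the $d^2$ points $p_i$. Your treatment is in fact more careful than the paper's on two points it passes over silently --- the exact contribution at infinity and the coprimality of $\tilde P$ (resp.\ $\tilde Q$) with $\tilde\De$ needed to invoke B\'ezout at all --- though your genericity reduction for the latter is only sketched.
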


{\it Proof.}
Let $V:=\ov{(P=0)}$ and $W:=\ov{(\De=0)}$ be the closures in $\p^2$ of $(P=0)$ and $(\De=0)$, respectively.
Since $deg(P)=d+1$ and $deg(\De)=d$ the intersection $V\cap W$ contains $d^2+d$ points counted with multiplicity.
Among these points we have the $d^2$ points $p_1,...,p_{d^2}$. The other points of intersection are at the line of infinity $\ell_\infty$ and correspond to the $d$ linear factors of $g$ (not necessarily different).
This implies the claim.\qed 

At this point, we can assert that $p_1,...,p_{d^2}$ are all singularities of $\fa$ with trace zero because $\De=0$ contains all such singularities. Therefore, the maximum number of non-degenerate singularities with zero trace is $d^2$: there is no other such singularity.
\vskip.1in
We have proved above that $V$ and $W$ meet transversely in $\C^2$ at the points $p_i$, $1\le i\le d^2$, and at $\ell_\infty$ at the points corresponding to $g=0$. Since $Q(p_i)=0$, $1\le i\le d^2$, and it meets $\ell_\infty$ and $W$ at the points corresponding to the $d$ linear factors of $g$, we obtain that $Q\in\left<P,\De\right>$, the ideal generated by $P$ and $\De$.

Therefore, we can write $Q=\a\,P+f\,\De$, where $\a\in\C^*$ and, since $deg(f)\leq 1$ in addition to the fact that $gcd(P,Q)=1$, we have that $deg(f)= 1$. In particular, if $p\in Sing(\fa)\setminus\{p_1,...,p_{d^2}\}$ then its trace is non zero and $f(p)=0$, because $P(p)=Q(p)=0$, but $\De(p)\ne0$.

\vskip.1in
We have concluded that the $d+1$ singularities (counted with multiplicities) of $\fa$, which are not in the set $\{p_1,...,p_{d^2}\}$, must be in the line $(f=0)$.
Since $\fa$ has degree $d$ and $\ov{(f=0)}$ contains $d+1$ singularities of $\fa$ the line $L:=\ov{(f=0)}$ must be $\fa$-invariant (see \cite{br1}).
\vskip.1in
Now, we change the affine coordinate system.
Let $\C^2=\p^2\setminus L$. In this new affine coordinate system, the foliation can be represented by a form $\eta=A(x,y)dx+B(x,y)dy$, where $max(deg(A),deg(B))=d$, because the line at infinity $\ell_\infty=L$ is $\fa$-invariant.
In fact, since $\{p_1,...,p_{d^2}\}\sub\C^2$ and $A(p_i)=B(p_i)=0$ for all $i=1,...,d^2$, we must have $deg(A)=deg(B)=d$.

\begin{claim}
$d\eta=0$. In particular, we have $\eta=dF$, where $deg(F)=d+1$.
\end{claim}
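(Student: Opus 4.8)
The plan is to show that the polynomial $\rho:=\frac{\pa B}{\pa x}-\frac{\pa A}{\pa y}$, which satisfies $d\eta=\rho\,dx\wedge dy$, is identically zero; note that $\deg\rho\le d-1$ since $\deg A=\deg B=d$. In this affine chart $\fa$ is generated by the vector field $X=B\,\frac{\pa}{\pa x}-A\,\frac{\pa}{\pa y}$, and the trace of its linearization at a singularity $p$ is exactly $\rho(p)$. Since the points $p_1,\dots,p_{d^2}$ are, by construction, non-degenerate singularities of $\fa$ with zero trace, we get $A(p_i)=B(p_i)=\rho(p_i)=0$ and, by non-degeneracy, $\big(A_xB_y-A_yB_x\big)(p_i)\ne0$ for $1\le i\le d^2$; thus the curves $(A=0)$ and $(B=0)$ meet transversely, with intersection multiplicity one, at each $p_i$.

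The next step is to identify $(A=0)\cap(B=0)$ in $\C^2$ with exactly $\{p_1,\dots,p_{d^2}\}$ and to check that, after homogenizing, there are no further intersection points at infinity. Since $\gcd(A,B)=1$ (as $cod(Sing(\fa))\ge2$) we have $Sing(\fa)\cap\C^2=(A=0)\cap(B=0)$, and we showed above that every singularity of $\fa$ outside $\{p_1,\dots,p_{d^2}\}$ lies on the invariant line $L=\ell_\infty$; hence $Sing(\fa)\cap\C^2=\{p_1,\dots,p_{d^2}\}$, a set of $d^2$ reduced points. Homogenizing $A$ and $B$ to forms $\wh A,\wh B$ of degree $d$ (here $\deg A=\deg B=d$ is used, and $\gcd(\wh A,\wh B)=1$ since $Z\nmid\wh A,\wh B$ while $\gcd(A,B)=1$), B\'ezout's theorem gives $\sum_p i_p(\wh A,\wh B)=d^2$; as the $d^2$ transverse affine points already saturate this count, $\wh A$ and $\wh B$ have no common zero on $\ell_\infty$. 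Hence $(\wh A=0)\cap(\wh B=0)$ is a reduced set of $d^2$ ordinary points lying entirely in $\C^2$.

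Finally, I would homogenize $\rho$ to a form $\wh\rho$ with $\deg\wh\rho\le d-1$ which vanishes at every point of $(\wh A=0)\cap(\wh B=0)$. By Max Noether's ``$AF+BG$'' theorem --- equivalently, because the ideal $\langle\wh A,\wh B\rangle$ of this reduced zero-dimensional complete intersection is saturated and radical --- one has $\wh\rho\in\langle\wh A,\wh B\rangle$; but this homogeneous ideal has no nonzero element of degree $<d$, so $\wh\rho=0$ and therefore $\rho\equiv0$, i.e. $d\eta=0$. Since $\C^2$ is simply connected and $\eta$ has polynomial coefficients, $\eta=dF$ for a polynomial $F$, and $\deg F=d+1$ because $\frac{\pa F}{\pa x}=A$ has degree exactly $d$. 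The step I expect to be the main obstacle is the last one: one must be sure that $(\wh A=0)\cap(\wh B=0)$ is genuinely a reduced complete intersection (no embedded or infinitely near components, nothing at infinity), since this is precisely what licenses the degree bound on $\langle\wh A,\wh B\rangle$ forcing $\wh\rho=0$; the count of singularities on $L$ and the B\'ezout argument are routine once that is in place.
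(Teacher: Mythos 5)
Your proof is correct and follows essentially the same route as the paper: both arguments show that $G=\frac{\pa B}{\pa x}-\frac{\pa A}{\pa y}$ vanishes at the $d^2$ points where $(A=0)$ and $(B=0)$ meet transversely, and then use that the ideal $\langle A,B\rangle$ of this reduced complete intersection contains no nonzero element of degree $<d$ to force $G\equiv 0$. Your write-up merely makes explicit (via B\'ezout and Max Noether) the saturation/radicality facts that the paper asserts in one line, which is a welcome but not substantively different elaboration.
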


{\it Proof.}
Since $A(p_i)=B(p_i)=0$, $1\le i\le d^2$, and $deg(A)=deg(B)=d$, the curves $(A=0)$ and $(B=0)$ meet transversely at exactly the $d^2$ points $p_i$. In particular the ideal $\I=\left<A,B\right>$, generated by $A$ and $B$, has the following property: if $H\in\C[x,y,z]$ is such that $H(p_i)=0$, $1\le i\le d^2$ then $H\in\I$. In particular, if $H\in\I$ and $H\ne0$ then $deg(H)\ge d$.

Now, since $p_1,...,p_{d^2}$ are of trace zero for the dual vector field, we must have $d\eta(p_i)=0$, $1\le i\le d^2$.
Note that $d\eta=G\,dx\wedge dy$, where $G=\frac{\pa B}{\pa x}-\frac{\pa A}{\pa y}$, so that $G(p_i)=0$, $1\le i\le d^2$.
Therefore $G\in\I$, but this implies $G=0$ because $deg(G)<d$.

Hence $\eta=dF$ where $deg(F)=d+1$. This implies that $\fa$ has the first integral $F/L^{d+1}$.
This finishes the proof of the claim and of proposition \ref{pr2}. \qed
\vskip.1in
Let us finish the proof of Theorem \ref{pM}. Suppose that a foliation $\fa$ of degree $d$ on $\p^3$ has a first integral of the form $F/L^{d+1}$, where $F\in\C[x,y,z,t]$ is homogeneous of degree $d+1$.
If $F$ is non singular then for a generic hyperplane $\p^2\simeq\Si\sub\p^3$ the restriction $F|_\Si$ has $d^2$ Morse points, so that the fiber of the Gauss map associated has $d^2$ points.
\vskip.1in
Conversely, let $\fa$ be a foliation of $\p^3$ such that the generic fiber of the Gauss map has $d^2$ points.
In particular, if $\Si\in\breve\p^3$ is a generic hyperplane then $\fa|_\Si$ has $d^2$ singularties with trace zero, so that by  proposition \ref{pr2}, $\fa|_\Si$ has a first integral of the form $f/\ell^{d+1}$. This implies that $\fa$ has a first integral of the form $F/L^{d+1}$ (see \cite{cerveau-moussu}).
This finishes the proof of theorem \ref{pM}.
\qed

\subsection{Proof of Proposition \ref{p1}}\label{ss21}
Let $\Phi=\G^{-1}\colon \p^n\dasharrow\p^n$ be the inverse of $\G$.
Then there are polynomial maps $\wh\G\colon \C^{n+1}\to\C^{n+1}$ and $\wh\Phi\colon\C^{n+1}\to \C^{n+1}$ such that
$\Pi\circ\wh \G=\G\circ\Pi$ and $\Pi\circ\wh\Phi=\Phi\circ\Pi$, so that  $\G[z_0:...:z_n]=[\wh\G_0(z):...:\wh\G_n(z)]$
and $\Phi[w_0:...:w_n]=[\wh\Phi_0(w):...:\wh\Phi_n(w)]$.

Note that the components of $\wh\G$ are homogeneous polynomials of the same degree $deg(\wh\G)$.
Similarly, the components of $\wh \Phi$ have the same degree $deg(\wh \Phi)$.

The indetermination set of $\wh\Phi$ has codimension $>1$, and this implies that the components of $\wh\Phi$ have no common factor.

Now, from $\Phi\circ \G=I$, we must have $\wh\Phi\circ\wh \G=f.I$, where $f$ is a homogeneous polynomial of degree $deg(\wh\Phi).deg(\wh \G)-1$. Note that $deg(\wh \G)>1$ and $deg(\wh \Phi)>1$, because otherwise $\G$ and $\Phi$ would be automorphisms of $\p^n$. In particular, $f$ is non constant, and $(f=0)$ defines in homogeneous coordinates a hypersurface, say $V$.
We assert that $\G(V)$ has codimension $\ge2$. 

In fact, from $\wh\Phi\circ\wh \G=f.I$ we get $\wh\Phi\circ\wh \G(f=0)=0$ and this implies that $\wh\Phi_j(\wh \G(f=0))=0$, $1\le j\le n$. Since the components of $\wh\Phi$ have no common components, we must have $cod(\wh \G(f=0))\ge2$, which implies that $cod(\G(V))\ge2$.

It remains to prove that if $h$ is not a factor of $f$ then it is not contracted by $\G$. 
In this case, we have that $(f=0)\cap(h=0)$ has codimension two and the set
\[
U:=\{[z]\in \p^n\,|\,h(z)=0\,\,\text{and}\,\,f(z)\ne0\}
\]
has dimension $n-1$ and is Zariski dense in $(h=0)$. 
Let $[z]\in U$. In homogeneous coordinates we have
\[
[z]\in U\,\implies\,\wh\Phi\circ\wh \G(z)=f(z).\,z\ne0\,\implies
\]
\[
dim(\Phi\circ \G(U))=n-1\,\implies\,dim(\G(U))=n-1\,,
\]
so that, $dim(\G(h=0))=n-1$ and $(h=0)$ is not contracted, as we wished.
\qed

\subsection{Proof of Proposition \ref{p2}}\label{ss22}
Let $\eta=\sum_{j=0}^nA_j(z)dz_j$ be an integrable form representing $\fa$ in homogeneous coordinates, where $A_0$, $A_1$,...,$A_n$ are homogeneous polynomials of the same degree satisfying the relation $i_R\eta=\sum_{j=0}^nz_jA_j(z)=0$.
In this case, we have
\[
\G_\fa[z]=[\wh\G(z)]=[A_0(z):...:A_n(z)]=[w_0:...:w_n]\,,
\]
which by hypothesis is birational.
Let $\Phi[z]=[\wh\Phi(z)]=[\wh\Phi_0(z):\wh\Phi_1(z):...:\wh\Phi_n(z)]$ be the inverse of $\G_\fa$, so that $\wh\G_\fa\circ\wh\Phi(w)=g(w).w$, where $g\in\C[z_0,...,z_n]$ is non constant.
Then
\[
\wh\Phi^*(\eta)=\sum_{j=0}^nA_j(\wh\Phi(w))\,d\wh\Phi_j=g(w)\sum_{j=0}^nw_j\,d\wh\Phi_j\,,
\]
so that $\G_{\fa*}(\fa)$ is represented by the form $\te:=\sum_{j=0}^nw_j\,d\wh\Phi_j$.
We assert that $\te=-\sum_{j=0}^n\wh\Phi_j(w)\,dw_j$.

In fact, composing the relation $\sum_{j=0}^nz_j\,A_j(z)=0$ with $\wh\Phi$ we get
\[
0=\left(\sum_{j=0}^nz_j\,A_j(z)\right)\circ\wh\Phi(w)=\sum_{j=0}^n\wh\Phi_j(w)\,A_j\circ\wh\Phi(w)=g(w)\sum_{j=0}^nw_j\wh\Phi_j(w)\implies
\]
\[
\sum_{j=0}^nw_j\wh\Phi_j(w)=0\,\implies\,\te=\sum_{j=0}^nw_j\,d\wh\Phi_j=-\sum_{j=0}^n\wh\Phi_j(w)\,dw_j
\]
Since $\wh\Phi$ is birational, we obtain $\G_{(\G)_*(\fa)}=\G_{\fa}^{-1}$.
\qed

\subsection{Proof of Theorem \ref{tnp}}\label{ss?}
Let $\fa=F^*(\G)$, where $F\colon\p^3\dasharrow\p^2$ has non planar fibers, except for a finite number of them.
We assume that the Gauss map $G_\fa$ is birational and that $G_\fa$ contracts a hypersurface $S$, not $\fa$-invariant. Remark that $S$ is not a plane because, using the condition that $G_{\fa}(S)$ is a point, we would have that $S$ is invariant. Then, the idea is to prove that $\fa$ has infinitely many algebraic leaves. By a theorem of Darboux, this implies that $\fa$ has a rational first integral (see \cite{j}). Let us prove this fact.

Since $S$ is not contracted to a point, $C:=G_\fa(S)$ is a curve of $\breve\p^3$.
In particular, there is a finite subset $A\sub C$ such that for any $m\in C':=C\setminus A$, $G_\fa^{-1}(m)$ is a curve of $S$.
Note that $G_\fa^{-1}(m)$ is a plane curve because it is contracted to a point by $G_\fa$. Moreover, $G_\fa^{-1}(m)$ is contained in a leaf of $\fa$, by the definition of $G_\fa$. 
 
Since the generic fibers of $F$ are not plane curves, taking a generic $m\in C'$, we can suppose also that $G_\fa^{-1}(m)$ is not a fiber of $F$, so that $\wh{C}$, the closure of $F(G_\fa^{-1}(m))$, is a curve of $\p^2$, contained in a leaf of $\G$.

\[
(\p^2,\wh{C})\underset{F}\longleftarrow(\p^3,S\sup G_\fa^{-1}(m))\underset{G_\fa}\longrightarrow(\breve\p^3,C\ni m)
\]

Since the fibers of $F$ are not plane curves $G_\fa^{-1}(m)\sub F^{-1}(\hat{C})$, but $G_\fa^{-1}(m)\ne F^{-1}(\hat{C}) $, so that the closure of $F^{-1}(\hat{C})$ is an algebraic surface of $\p^3$. Since $\fa=F^*(\G)$ we get the leaf of $\fa$ containing $G_\fa^{-1}(m)$ is algebraic. 

Now, if $m'\in C'$ and $m'\ne m$ then $G_\fa^{-1}(m')\ne G_\fa^{-1}(m)$, because these curves are contracted to a point by $G_{\fa}$. The intersections $G_\fa^{-1}(m) \cap S$ define a foliation of the surface $S$ by algebraic curves. If $\mathcal{L}$ is a leaf of $\fa$ containing
$G_\fa^{-1}(m)$, $\mathcal{L}$ intersects $S$ in at most a countable number (in fact finite, since it is algebraic) of connected components. Therefore, for generic $m$ and $m'$, the corresponding leaves of $\fa$ are distinct. Consequently, $\fa$ has infinitely many algebraic leaves.
This proves the result.\qed

\vskip.1in
\subsection{Proofs of Theorem \ref{p4} and of Corollaries \ref{c}, \ref{c12} and \ref{c14}.}\label{ss24}

\subsubsection{Proof of Theorem \ref{p4}}
Let $\fa$ be a foliation on $\p^3$ with $Sing_2(\fa)=V_1\cup...\cup V_k$ and $deg(\fa)=d$. Let $\p^2\simeq\Si\sub\p^3$ be a 2-plane in general position with $\fa$, in the sense that $\Si$ intersects transversely $Sing(\fa)$. Since $dim(\Si)=2$, this implies that:\\
1. $\Si\cap Sing(\fa)\sub Sing_2(\fa)$.\\
2. For all $i=1,...,k$, $\Si\cap V_i$ contains exactly $deg(V_i)$ points, say $p_{ij}$, $1\le j\le deg(V_i)$.

Since $p\in V_i\mapsto\mu(\fa,V_i,p)$ is constant in a Zariski dense subset of $V_i$, we can assume also that:\\
3. $\mu(\fa,V_i,p_{ij})=\mu(\fa,V_i)$ for all $1\le j\le deg(V_i)$.\\
4. $cod(Sing(\fa|_\Si))\ge2$. In particular, all singularities of $\fa|_\Si$ are isolated.

In this case, we have (see \cite{ls}):
\[
\sum_{p\in Sing(\fa|_\Si)}\mu(\fa|_\Si,p)=d^2+d+1\,.
\]

Since $\Si\cap Sing(\fa)\sub Sing_2(\fa)$ we get
\[
\sum_{i=1}^k\mu(\fa,V_i).deg(V_i)=\sum_{i=1}^k\sum_{j=1}^{deg(V_i)}\mu(\fa|_\Si,p_{ij})\le d^2+d+1
\]

Let $m=(d^2+d+1)-\sum_{i=1}^k\mu(\fa,V_i).deg(V_i)$. In this case, if $\Si\simeq\p^2$ is a 2-plane of $\p^3$ transverse to $Sing_2(\fa)$ then the singularities of the restricted foliation $\fa|_\Si$ are of two kinds: the points of $\Si\cap Sing_2(\fa)$ and the tangencies between the leaves of $\fa$ and $\Si$. After a small perturbation of the plane $\Si$, we can assume that these tangencies are all of Morse type (see \cite{cl}). On the other hand, the number of singularities of $\fa|_\Si$ in $\Si\cap\bigcup_iV_i$, counted with multiplicities, is exactly 
\[
\sum_{p\in \Si\cap Sing_2(\fa)}\mu(\fa,V_i).deg(V_i)\,\implies\,
\]
the other singularities of $\fa|_\Si$ are the $m$ Morse tangencies (note that $m$ could be zero). Hence $\G_\fa^{-1}(\Si)$ contains $m$ points. \qed

\subsubsection{Proof of corollary \ref{c}}
First of all, let $p_1,...,p_N$, $N={d^2+d}$, be the singularities of $\fa_o|_\Si$ with generic linear part. Since $\fa_o|_\Si$ has degree $d$, it must have a total of $d^2+d+1=N+1$ singularities, counted with multiplicity. However, the hypothesis implies that $\mu(\fa_o|_\Si,p_j)=1$, $1\le j\le N$, so that $\fa_o$ must have another singularity, say $q$, with $\mu(\fa_o,q)=1$. Since $\G_{\fa_o}$ is dominant, the singularity $q$ is necessarily a Morse center and $q$ is a Morse tangency of $\fa_o$ with $\Si$. Now, we use the known fact that singularities with non degenerate linear part are stable by small deformations of the foliation on $\p^2$ (see section 2.4 and corollary 2.7 of \cite{ls}). This implies that:\\
1. The irreducible component $V_{oj}$ of $Sing_2(\fa_o)$ containing $p_j$ satisfies $\mu(\fa_o,V_{oj})=1$, so that
\[
\sum_{V\sub Sing_2(\fa_o)}\mu(\fa_o,V).deg(V)=d^2+d\,.
\]\\
2. When we deform $\fa_o$ inside the irreducible component $Z$ of $\fol(3,d)$, the deformed foliation, say $\fa\in Z$, must also satisfy (see \cite{ls}):
\[
\sum_{V\sub Sing_2(\fa)}\mu(\fa,V).deg(V)=d^2+d\,.
\]
This finishes the proof.\qed

\subsubsection{Proof of corollary \ref{c12}.}
Let $\fa$ and $V_1,...,V_k$ be as in the hypothesis of Corollary \ref{c12} and let $\G_\fa\colon \p^n\dasharrow\breve\p^n$ be its Gauss map. The idea is to prove that a generic hyperplane of $\p^n$ has just one tangency point with the foliation. This implies that $\G_\fa$ is birational.

First of all, if $\p^2\simeq\Si_2$ is a generic two-plane of $\p^n$ then it has exactly one Morse tangency point with $\fa$ at a point in $\Si_2\setminus Sing(\fa)$. 
This can be proved using the condition $\sum_{j=1}^k\mu(\fa,V_j).deg(V_j)=d^2+d$.
The proof is done in the same way as the proof of Theorem \ref{p4}.
We leave the details to the reader.

Now, since $\G_\fa$ is dominant, a generic $(n-1)$-plane of $\p^n$ has at least one tangency of Morse type with $\fa$ at a non singular point. Suppose by contradiction that there exists a hyperplane, say $\Si_{n-1}$ of $\p^n$ with at least two Morse tangencies with $\fa$, say $m_1$ and $m_2$.
In this case, let $\Si_2\sub\Si_{n-1}$ be a 2-plane containing the points $m_1$ and $m_2$.
Then it is clear that for a generic choice of such a 2-plane, $\Si_2$ would have tangencies of Morse type with $\fa$ at these points, a contradiction.
\qed

\subsubsection{Proof of corollary \ref{new}} Since $\p^m$ is linearly embedded in $\p^n$, the components of the singular set of the restriction $\fa_{|\p^m}$ satisfy the formula of the previous corollary, so we have the result.
\qed

\subsubsection{Proof of corollary \ref{c14}}
Let $\fa$ and $V_1$,...,$V_k$ be as before. Since $\mu(\fa,V_i)\ge1$ we have
\[
\sum_{j-1}^k\mu(\fa,V_j)\,deg(V_j)\ge \sum_{j-1}^kdeg(V_j)=d^2+d\qed
\]

\subsection{Proof of Theorem \ref{pmon}}\label{sspmon}
Let $\om$ be an integrable 1-form on $\C^4$ defining a monomial foliation on $\p^3$ of degree $d\geq 2$. We can write $\om=\sum_{i=1}^4A_i(z)dz_i$, where $A_{i's}$ are monomials of degree $\ge3$. Let
\[
A_i(z)=\la_iz_1^{a_{i1}}...z_4^{a_{i4}}\,,\,a_{ij}\in\Z_{\ge0}\,,\,i\in\{1,2,3,4\}\,.
\]
Setting $B_i=z_iA_i$, we can write
\[
\om=B_1\frac{dz_1}{z_1}+B_2\frac{dz_2}{z_2}+B_3\frac{dz_3}{z_3}+B_4\frac{dz_4}{z_4}
\]

The condition $i_R\om=0$ implies that $B_1+B_2+B_3+B_4=0$. 
Note that $B_i\ne0$, for all $i$. In fact, if $B_4=0$ for instance, then $\om=A_1dz_1+A_2dz_2+A_3dz_3$ and $\G_\om$ cannot be dominant. We also ask that the monomials $A_1$, $A_2$, $A_3$, $A_4$ have no variables in common to have codimension of the singular set $\geq 2$.

Observe that each $B_i$ is a monomial of degree $d+2$, with exponent vector $(a_{11}+1,a_{12},a_{13},a_{14})$, $(a_{11},a_{12}+1,a_{13},a_{14})$, $(a_{11},a_{12},a_{13}+1,a_{14})$, $(a_{11},a_{12},a_{13},a_{14}+1) \in \Z^4_{\geq 0}$, for $i=1,2,3,4$; respectively. Since the sum $B_1+B_2+B_3+B_4$ vanishes identically, the monomials must be grouped according to identical exponent vectors, with the coefficients in each group summing to zero. With this observation, only the following cases can occur.
\\

\noindent Case 1: All monomials in $\{B_1,B_2,B_3,B_4\}$
have exactly the same vector of exponents in $\Z^4_{\geq 0}$. In this case and since $A_i´s$ do not have variables in common, we have $B_i=\lambda_iz_1z_2z_3z_4$ for all $i=1,2,3,4$ and $\lambda_1+\lambda_2+\lambda_3+\lambda_4=0$. Therefore, the foliation $\fa_{\om}$ has degree $2$ and belongs to the logarithmic component $\L(1,1,1,1;3)$.
\\

\noindent Case 2: If the exponent vectors are not all equal, there cannot exist a single exponent vector differing from the others, because in this case the sum of the monomials is not zero. Therefore, with 4 exponent vectors, we only have the following possibility:
the set $\{B_1,B_2,B_3,B_4\}$ splits into exactly two disjoint subsets, each containing two monomials with identical exponents. And the sum of their coefficients is zero.
\\

Then, after a permutation of the indexes, we can assume that $B_1+B_2=0$ and $B_3+B_4=0$. Then we have $\om=B_1\Te_{12}+B_3\Te_{34}$, where $\Te_{ij}=\frac{dz_i}{z_i}-\frac{dz_j}{z_j}$.
Since $d\Te_{ij}=0$ and $\Te_{ij}=\frac{z_j}{z_i}d\left(\frac{z_i}{z_j}\right)$, the reader can check that integrability condition $\om\wedge d\om=0$ can be written as
\[
0=d\left(\frac{B_1}{B_3}\right)\wedge \Te_{12}\wedge\Te_{34}=0\,\implies\,d\left(\frac{B_1}{B_3}\right)\wedge d\left(\frac{z_1}{z_2}\right)\wedge d\left(\frac{z_3}{z_4}\right)=0.
\]
With this we conclude that equations $a_{11}+1-a_{31}=a_{32}-a_{12}=p$, $a_{13}-a_{33}-1=a_{34}-a_{14}=q$ are satisfied, to have
\[
\frac{B_1}{B_3}=\frac{\la_1}{\la_3}\left(\frac{z_1}{z_2}\right)^{p}\left(\frac{z_3}{z_4}\right)^{q}\,,
\]
where $\la \in \C^*$. Since $z_2 \vert B_1$ and $z_4 \vert B_3$, then $a_{12} >0$ and $a_{34} >0 $, this implies that $a_{32}=0$ and $a_{14}=0$; otherwise $z_2$ or $z_4$ divides $\omega$. Similarly, we can see that if $a_{11} >0$ or $a_{33}>0$ then $z_1 \vert A_1, A_2$ or $z_3 \vert A_3, A_4$, this implies $a_{31}=0$ or $a_{13}=0$, in the first case $p=-a_{12}=a_{11}+1 >0$ and, in the second case, $q=a_{34}=-a_{33}-1<0$. Both produce a contradiction; therefore, $a_{11}=a_{33}=0$. With this we obtain the following specific 1-form that defines the foliation:

$$\om=\lambda_1 z_1z_2^{a_{12}}z_3^{a_{13}}\Te_{12}+\lambda_3 z_1^{1+a_{12}}z_3z_4^{a_{13}-1}\Te_{34},$$

\noindent where $a_{12} \geq 1$ and $a_{13} \geq 2$. Moreover, we have
$p=-a_{12}=1-a_{31}<0$ and $q=a_{34}=a_{13}-1>0$. After a linear change of variables, we can assume $\frac{\la_1}{\la_3}=1$. With all this, we finally conclude that
\[
B_3^{-1}.\,\om=\frac{B_1}{B_3}\left(\frac{dz_1}{z_1}-\frac{dz_2}{z_2}\right)+\left(\frac{dz_3}{z_3}-\frac{dz_4}{z_4}\right)=
\]
\[
=\left(\frac{z_1}{z_2}\right)^{ p}\left(\frac{z_3}{z_4}\right)^{ q}\left[\frac{z_2}{z_1}d\left(\frac{z_1}{z_2}\right)\right]+\frac{z_4}{z_3}d\left(\frac{z_3}{z_4}\right)
\]
If we set $u=z_1/z_2$ and $v=z_3/z_4$ then we get
\[
vu^{1-p}B_3^{-1}\om= v^{q+1}du+u^{1-p}dv\,,
\]
as we wished.\qed

\subsection{Proofs of Proposition \ref{p5} and of Corollary \ref{c15}}\label{ss25}

\subsubsection{Proof of Proposition \ref{p5}}
We will apply Theorem \ref{p4}. The idea is to use that $Sing_2(\fa)=\bigcup_{1\le i<j\le k}V_{ij}$, where $V_{ij}=(f_i=f_j=0)$ (see \cite{ca}).
This is true by the assumption that $\om_\fa$ has no zero divisors.

Since the hypersurfaces $(f_i=0)$ and $(f_j=0)$ are generically transverse we have $\mu(\fa,V_{ij})=1$, and so
\[
\sum_{i<j}deg(V_{ij})\mu(\fa,V_{ij})=\sum_{i<j}\deg(V_{ij})=\sum_{i<j}p_i\,p_j\,. 
\]

Now, since $\om_\fa$ has no zero divisor, we have $deg(\fa)=\sum_{i=1}^kp_i-2$ (see example 2.5 of \cite{ls}) and
\[
deg(\fa)^2+deg(\fa)+1-\sum_{i<j}p_i\,p_j=\sum_{j=1}^kp_j^2+\sum_{i<j}p_ip_j-3\sum_{j=1}^kp_j+3:=\ell
\]

Note that necessarily $\ell>0$, for otherwise $\G_\fa$ is not dominant.
In particular, $\G_\fa$ is generically $\ell$ to one by Theorem \ref{p4}.
This proves Proposition \ref{p5}.\qed

\subsubsection{Proof of corollary \ref{c15}}
Let $\fa_\om\in\L(p_1,...,p_k;3)$:
\[
\om=\sum_{j=1}^k\la_j\frac{df_j}{f_j}\,,
\]
where $deg(f_j)=p_j$, $1\le i\le k$. If $\om$ is generic, then the hypersurfaces $(f_i=0)$ and $(f_j=0)$ are transverse, for all $i<j$, so that we can apply the proposition \ref{p5}.
In particular, $\G_\fa$ is birational if, and only if, 
\[
\sum_{j=1}^kp_j^2+\sum_{i<j}p_ip_j-3\sum_{j=1}^kp_j+3=1\,.
\]

Let 
\[
h_k(x)=f_k(x_1,...,x_k)=\sum_{j=1}^kx_j^2+\sum_{i<j}x_ix_j-3\sum_{j=1}^kx_j+3\,.
\]
We are interested in the values of $h_k$ in $\N^k\sub\R^k$.
Moreover, since $h_k$ is symmetric we will assume that $(x_1,...,x_k)\in\De$, where
\[
\De:=\{x\in\N^k\,|\,1\le x_1\le x_2\le...\le x_k\}\,.
\]

First of all, if $k=4$ then $h_4(1,1,1,1)=1$. Similarly, if $k=2$ then $h_2(1,2)=1$.
We will prove that these are the only possible solutions.

Note that if $x\in\De$ and $k\ge3$ then:
\[
\frac{\pa h_k}{\pa x_j}(x)=x_j+\sum_{i=1}^kx_i-3\ge k-2>0\,.
\]
Therefore, in $\De$, $h_k$ is strictly increasing with respect to $x_j$, for all $j$.
In particular, in $\De$ we have
\[
h_k(x)\ge h_k(1,...,1)=k+\frac{k(k-1)}{2}-3k+3=\frac{1}{2}(k-2)(k-3)):=g(k)\,.
\]
If $k\ge5$ and $x\in\De$, then $h_k(x)\ge g(5)=3$. Hence $k\le4$ and if $k=4$ we must have $(x_1,x_2,x_3,x_4)=(1,1,1,1)$.

It remains to prove that if $k\le 3$ then $k=2$ and $(x_1,x_2)=(1,2)$. 
In fact, we have $h_3(1,1,1)=g(3)=0$ and $h_3(1,1,2)=2$, so that
$h_3(x_1,x_2,x_3)\ne1$, for all $(x_1,x_2,x_3)\in\De$.
Therefore $k=2$ as we wished. Finally, if $x_1>1$ or $x_2>2$ then $h_2(x_1,x_2)> h_2(1,2)=1$, so that $(x_1,x_2)=(1,2)$ (see example \ref{ex2}).\qed
 
\section{Exceptional components}\label{ss3}

Before giving the results of this section we will describe the algorithm we have used to find the inverse of a Gauss map 
if it exists. The computations were done by using elimination theory and Groebner basis, with the help of the free computer algebra system \textbf{Singular}. Our method is as follows, although some others can already be found in the literature, see for example \cite{macaulay2}.
\\

Suppose that the Gauss map:

\begin{align*}
\Phi: \mathbb{P}^n &\dashrightarrow \mathbb{P}^n\\
(z_0,...,z_n) &\mapsto (A_0,...,A_n),
\end{align*}

\noindent associated to the foliation $\omega=\sum_{i=0}^n A_i(z_0,...,z_n)dz_i$ is birational. Consider the ideal $\mathcal{I}$ in $\C[z_0,...,z_n,a_0,...,a_n]$ generated by the polynomials:

\begin{equation*}
a_0-A_0(z_0,...,z_n),...,a_n-A_n(z_0,...,z_n).
\end{equation*}

\noindent  The Groebner's bases elimination theorem (see \cite{cox}) says that if we fix a lex monomial order where $z_0 > ...> z_n > a_0 > ... > a_n$, then: if $\mathcal{G}$ is a Groebner basis of $\mathcal{I}$, we have that 
for $0 \leq j < n$, the  set $\mathcal{G}_j:=\mathcal{G} \cap \C[z_{j+1},...,z_n,a_0,...,a_n]$ is a Groebner basis for the ideal $\mathcal{I}_j:=\mathcal{I} \cap \C[z_{j+1},...,z_n,a_0,...,a_n]$.
\\

If we consider a polynomial in $\mathcal{G}_{n-1}$ equal to zero and it is possible to isolate the variable $z_n$ (or any other), then we obtain $z_n=B_n(a_0,...,a_n)$. Therefore we substitute $z_n$ in some polynomial in $\mathcal{G}_{n-2}$ to obtain $z_{n-1}=B_{n-1}(a_0,...,a_n)$. Following this process we can obtain the inverse function:

\begin{align*}
\Phi^{-1}: \mathbb{P}^n &\dashrightarrow \mathbb{P}^n\\
(a_0,...,a_n) &\mapsto (B_0,...,B_n).
\end{align*}

\subsection{The exceptional component $\E(2,3;3)$.}\label{ss23'}
In homogeneous coordinates $(x,y,z,t)$, the generic exceptional foliation in $\E(2,3;3)$ is defined by the form
\begin{equation}\label{eq3}
\om=t(2y^2-3xz)dx+t(3tz-xy)dy+t(x^2-2yt)dz+(2x^2z-yzt-xy^2)dt
\end{equation}

In affine coordinates $(t=1)\simeq\C^3$ the Gauss map $\G_\E$ can be written as:

\begin{align*}
\G_\E: \C^3 &\dashrightarrow \C^3\\
(x,y,z) &\mapsto \left( \frac{2y^2-3xz}{\Delta}, \frac{3z-xy}{\Delta}, \frac{x^2-2y}{\Delta}\right)=(a,b,c),
\end{align*}

\noindent where $\Delta(x,y,z)=2x^2z-yz-xy^2$. As we have seen in example \ref{ex4}, this map is birational. Its inverse is:

\begin{align*}
\G_\E^{-1}: \C^3 &\dashrightarrow \C^3\\
(a,b,c) &\to \left( \frac{ab-3c}{2ac-b^2}, \frac{3b-2a^2}{2ac-b^2}, \frac{a^2b+ac-2b^2}{c(2ac-b^2)}\right),
\end{align*}

\noindent the associated foliation in $\mathbb{P}^3$, expressed in homogeneous coordinates $(a,b,c,e)\in\C^4$ is given by the 1-form:

$$\wt\omega=c(ab-3ce)da+c(3be-2a^2)db+(a^2b+ace-2b^2e)dc+c(2ac-b^2)de.$$

$\wt\omega$ is conjugated to $\omega$, with the isomorphism:

$$\Phi(a,b,c,e)=(b,a,e,c)=(x,y,z,w).$$

\noindent This is an example of a self-dual foliation.

\begin{prop}
In homogeneous coordinates, we have:
\[
\G_\E^{-1} \circ \G_\E=t^2(2yt-x^2)(-3x^2y^2+6x^3z+8y^3t-18xyzt+9z^2t^2)Id_{\mathbb{CP}^3}.
\]

In particular, $\G_\E$ contracts the reduced hypersurface 
$$\mathbb{V}(t(2yt-x^2)(-3x^2y^2+6x^3z+8y^3t-18xyzt+9z^2t^2)).$$
\end{prop}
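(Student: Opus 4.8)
The plan is to verify the claimed identity by a direct but organized computation, and then read off the contraction statement as an immediate consequence of Proposition \ref{p1}. First I would recall that the Gauss map $\G_\E$ is given in homogeneous coordinates $(x,y,z,t)$ by
\[
\G_\E[x:y:z:t]=\bigl[t(2y^2-3xz):t(3tz-xy):t(x^2-2yt):2x^2z-yzt-xy^2\bigr],
\]
i.e. $\wh\G_\E=(Q_0,Q_1,Q_2,Q_3)$ with those four cubic entries, and that its inverse $\G_\E^{-1}$ is covered by the degree-three map whose entries, after clearing the common denominator $c(2ac-b^2)$ from the affine formula, are
\[
\wh\Phi(a,b,c,e)=\bigl(c(ab-3ce):c(3be-2a^2):a^2b+ace-2b^2e:c(2ac-b^2)\bigr).
\]
The substance of the proof is the identity $\wh\Phi\circ\wh\G_\E=f\cdot\mathrm{Id}$ with
\[
f(x,y,z,t)=t^2\,(2yt-x^2)\,(-3x^2y^2+6x^3z+8y^3t-18xyzt+9z^2t^2).
\]
Since $\wh\G_\E$ has degree $3$ and $\wh\Phi$ has degree $3$, the composite has degree $9$, which matches $2+2+4=8$... wait: $f$ has degree $2+2+4=8$ and the composite should have degree $8-1$? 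Let me instead note that the four components of $\wh\Phi\circ\wh\G_\E$ are homogeneous of degree $9$ and must be proportional to $(x,y,z,t)$, so the proportionality factor $f$ is homogeneous of degree $8$, consistent with the stated $f$.

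The key steps, in order, are: (1) substitute $a=Q_0,\ b=Q_1,\ c=Q_2,\ e=Q_3$ into each of the four components of $\wh\Phi$; (2) expand and collect, checking that the first component equals $x\cdot f$, the second $y\cdot f$, the third $z\cdot f$, and the fourth $t\cdot f$, with the same polynomial $f$ in all four cases; (3) verify that $f$, as written, is the reduced defining polynomial of the union of the plane $(t=0)$, the quadric $(2yt-x^2=0)$, and the cubic... (quartic, degree $4$) hypersurface $(-3x^2y^2+6x^3z+8y^3t-18xyzt+9z^2t^2=0)$, i.e. that these three factors are pairwise non-proportional and irreducible, so that $(f=0)$ is reduced; (4) invoke Proposition \ref{p1}: since $\G_\E$ is birational and not an automorphism (its inverse has degree $3>1$), and $\wh{\G_\E}^{-1}\circ\wh\G_\E=f\cdot I$, the map $\G_\E$ contracts exactly the irreducible components of $(f=0)$, which gives the second assertion.

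For step (1)–(2), the cleanest approach is to do the computation with a computer algebra system (the paper already describes using \textbf{Singular} for these inverse computations in \S\ref{ss3}), or, if a hand verification is wanted, to exploit that everything is divisible by $t$: each $Q_j$ for $j=0,1,2$ carries a factor $t$, so $c=Q_2=t(x^2-2yt)=-t(2yt-x^2)$ already contributes the factors $t$ and $(2yt-x^2)$ that appear in $f$; tracking these factors through $\wh\Phi$ reduces the bookkeeping considerably. The main obstacle is simply the size of the polynomial expansion in step (2) — verifying that one genuinely gets the same degree-$8$ polynomial $f$ in all four slots, with no cancellation error — so I would either present it as a \textbf{Singular} computation or organize the expansion by first pulling out the common $t$- and $(2yt-x^2)$-factors and then matching the remaining quartic. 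Step (3), checking irreducibility of the quartic factor $-3x^2y^2+6x^3z+8y^3t-18xyzt+9z^2t^2$, is elementary (it is irreducible because, e.g., it is a nonzero quadratic in $z$ whose discriminant in $z$ is not a perfect square in $\C[x,y,t]$, and it shares no common factor with $t$ or with $2yt-x^2$), so $(f=0)$ is reduced and Proposition \ref{p1} applies verbatim.
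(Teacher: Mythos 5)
Your proposal is correct and follows essentially the same route as the paper: the proposition is established by direct (computer-algebra, Gr\"obner-basis) verification of the polynomial identity $\wh\Phi\circ\wh\G_\E=f\cdot I$ for the homogenized inverse $\wh\Phi$, after which the contraction statement is read off from Proposition \ref{p1}. Your degree bookkeeping (components of degree $9$, hence $\deg f=8$) and your identification of the reduced hypersurface underlying $(f=0)$ are both right, so there is nothing to add.
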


We would like to observe that all these three hypersurfaces are $\E$-invariant.
In fact, they are branches of levels of the first integral $F=f^2/g^3$, where $f(x,y,z,t)=zt^2-xyt+x^3/3$ and $g(x,y,z,t)=yt-x^2/2$. 
\vskip.1in
Since $\E(2,3;n)$ is the set of linear pull-back foliations of $\E(2,3;3)$ by a linear map $L\colon\p^n\dasharrow\p^3$,
by remark \ref{r12}, if $\fa\in \E(2,3;n)$ and $n\ge4$ then $\G_\fa$ cannot be dominant.

\vskip.2in

\subsection{Gauss map for foliations in the boundary of the exceptional component $\E(2,3;3)$}\label{ss}

\subsubsection{The boundary of $\E(2,3;3)$}

As we saw in example \ref{ex4}, the component $\E(2,3;3)$ is the closure of the orbit of one foliation by the action of $SL_4(\mathbb{C})$. In particular, if $\E$ is the foliation defined by $\om$ like in example \ref{ex4} then (see \cite{cl}):

\begin{equation}\label{eq3}
\E(2,3;3)=\ov{\{g^*(\E_\om)\,|\,g\in\, SL_4(\C)\}}\,.
\end{equation} 
\vskip.1in
\noindent Here we will use this fact to determine some representative elements of the boundary of $\E(2,3;3)$. First, we must recall that a 1-parameter subgroup of $SL_4(\C)$ is an algebraic morphism from $\mathbb{C}^*$ to the group and, since $\mathbb{C}^*$ is abelian, all such subgroups are diagonalizable. Then, a 1-parameter subgroup of $SL_4(\C)$ is conjugated to one given as below:
 
\begin{align*}
\lambda_{(n_1,n_2,n_3)}: \C^* &\to SL_4(\C)\\
 s  &\mapsto \left(\begin{array}{cccc}
s^{n_1}&0&0&0 \\
0&s^{n_2}&0&0\\
0&0&s^{n_3}&0\\
0&0&0&s^{n_4}
\end{array}\right),
\end{align*}
\noindent where $n_1, n_2, n_3, n_ 4 \in \Z$ and $n_1+n_2+n_3+n_4=0$.

This 1-parameter subgroup acts on the variables as: 
\[
\lambda_{(n_1,n_2n_3)}(s)(x,y,z,t)=(s^{n_1}x,s^{n_2}y,s^{n_3}z,s^{n_4}t)
\]

In order to see how $\la_{(n_1,n_2,n_3)}$  acts on 
$\E$ we set $\omega=\omega_1+\omega_2+\omega_3$, where

\begin{align*}
\omega_1&=y \big(2yt dx-xtdy-xydt \big)\\
\omega_2&=x \big(-3ztdx+xtdz+2xzdt\big)\\
\omega_3&=t \big(3tz dy-2ytdz-yzdt \big),
\end{align*}

\noindent It can be seen directly that:

\begin{align*}
\lambda_{(n_1,n_2,n_3)}(s) \cdot \omega_1&=s^{n_2-n_{3}}\omega_1,\\
\lambda_{(n_1,n_2,n_3)}(s) \cdot \omega_2&=s^{n_1-n_2}\omega_2,\\
\lambda_{(n_1,n_2,n_3)}(s) \cdot \omega_3&=s^{n_4-n_1}\omega_3.\\
\end{align*}

\noindent If we take $\lambda_1(s)=\lambda_{(1,1,-3)}(s)$, $\lambda_2(s)=\lambda_{(1,-1,-1)}(s)$ and $\lambda_3(s)=\lambda_{(-1,-1,-1)}(s)$ then we have $\lim_{s \to 0} \lambda_i(s) \cdot \omega=\omega_j+\omega_k$, where $i,j,k \in \{1,2,3\}$ are all different from each other.

From the above, we conclude that the forms $\omega_1+\omega_2$, $\omega_2+\omega_3$ and $\omega_1+\omega_3$ are integrable and the foliations associated to them are in the boundary of $\E(2,3;3)$. Let $\E_{ij}$ the foliation defined in homogeneous coordinates by $\om_i+\om_j$, $1\le i<j\le 3$. Then $\pa \E(2,3;3)\supset \bigcup_{i,j}E_{ij}$, where
\[
E_{ij}=\ov{\{g^*(\E_{ij})\,|\,g\in SL_4(\C)\}}
\]

Next we study the foliations in $E_{ij}$, $1\le i<j\le 3$. We will see that the Gauss map of these three foliations are birational.

\begin{enumerate}

\item The first one $\E_{12}$ is defined by:
\begin{align*}
 \omega_1+\omega_2&=(2y^2t-3xzt)dx-xyt\,dy+x^2tdz+(2x^2z-xy^2)dt=\\
&xt(2xz-y^2)\Big(-2\frac{dx}{x}+\frac{dt}{t}+\frac{1}{2}\frac{d(2xz-y^2)}{2xz-y^2}\Big).   
\end{align*}

\noindent The singular set is:

\begin{equation*}
\mathbb{V}(x,y) \cup \mathbb{V}(x,t) \cup \mathbb{V}(t,2xz-y^2).
\end{equation*}

\noindent Its Gauss map at the chart $t=1$ is: 

\begin{align*}
\Phi_{12}: \C^3 &\dashrightarrow \C^3\\
(x,y,z) &\to \Big( \frac{2y^2-3xz}{\Delta_{12}}, \frac{-xy}{\Delta_{12}}, \frac{x^2}{\Delta_{12}}\Big)=(a,b,c),
\end{align*}

\noindent where $\Delta_{12}=2x^2z-xy^2$; its inverse satisfies:

\begin{align*}
cy+bx&=0\\
(2ac^2-b^2c)z-ac+2b^2&=0\\
2by+3cz+ax&=0
\end{align*}

\noindent Solving the above system, we get the inverse:

\begin{align*}
\Phi_{12}^{-1}: \C^3 &\dashrightarrow \C^3\\
(a,b,c) &\to \left( \frac{-3c}{2ac-b^2}, \frac{3b}{2ac-b^2}, \frac{ac-2b^2}{c(2ac-b^2)}\right),
\end{align*}

\noindent The associated foliation in $\mathbb{P}^3$, using homogeneous coordinates $(a,b,c,e)$, is given by the 1-form:

\begin{align*}
  (\omega_1+\omega_2)'&=-3c^2e\,da+3bce\,db+(ace-2b^2e)\,dc+(2ac^2-b^2c)\,de\\
  &=ce(2ac-b^2)\Big(2\frac{dc}{c}+\frac{de}{e}-\frac{3}{2}\frac{d(2ac-b^2)}{2ac-b^2}\Big).
\end{align*}

\noindent Its singular set is:

\begin{equation*}
\mathbb{V}(b,c) \cup \mathbb{V}(c,e) \cup \mathbb{V}(e,2ac-b^2).
\end{equation*}

\begin{prop} 
In homogeneous coordinates, we have:

\begin{align*}
\Phi_{12}^{-1} \circ \Phi_{12}=t^2x^4(-y^2+2xz)Id_{\mathbb{CP}^3},
\end{align*}

\noindent In particular, the reduced hypersurface that $\Phi_{12}$ contracts is 

$$\mathbb{V}(tx(-y^2+2xz)).$$
\end{prop}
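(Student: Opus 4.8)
The plan is a direct computation in homogeneous coordinates, followed by an application of Proposition \ref{p1}. First I would record the polynomial lifts $\hat\Phi_{12},\,\hat\Phi_{12}^{-1}\colon\C^4\to\C^4$ by reading off the coefficients of $\om_1+\om_2$ and of $\om_1'+\om_2'$:
\[
\hat\Phi_{12}(x,y,z,t)=\big(2y^2t-3xzt,\ -xyt,\ x^2t,\ 2x^2z-xy^2\big),
\]
\[
\hat\Phi_{12}^{-1}(a,b,c,e)=\big(-3c^2e,\ 3bce,\ e(ac-2b^2),\ c(2ac-b^2)\big),
\]
the second being exactly the solution of the linear system $cy+bx=0$, $(2ac^2-b^2c)z-ac+2b^2=0$, $2by+3cz+ax=0$ written just above the statement.

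Next I would substitute $a=2y^2t-3xzt$, $b=-xyt$, $c=x^2t$, $e=x(2xz-y^2)$ into $\hat\Phi_{12}^{-1}$. The computation hinges on two elementary identities obtained from this substitution: $2ac-b^2=-3x^2t^2(2xz-y^2)$ and $ac-2b^2=-3x^3zt^2$. Feeding these into the four components of $\hat\Phi_{12}^{-1}$, each component acquires the common factor $-3\,x^4t^2(2xz-y^2)$, and one reads off
\[
\hat\Phi_{12}^{-1}\circ\hat\Phi_{12}(x,y,z,t)=-3\,x^4t^2(2xz-y^2)\,(x,y,z,t).
\]
Passing to $\mathbb{CP}^3$ this is the asserted relation $\Phi_{12}^{-1}\circ\Phi_{12}=t^2x^4(-y^2+2xz)\,Id_{\mathbb{CP}^3}$; the harmless constant $-3$ merely reflects the choice of lifts and affects neither the projective statement nor the zero divisor $f:=x^4t^2(2xz-y^2)$.

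For the second assertion I would invoke Proposition \ref{p1}: the irreducible hypersurfaces contracted by $\Phi_{12}$ are precisely the irreducible factors of $f$. Since $x$, $t$ and the quadric $2xz-y^2$ are irreducible and pairwise non-proportional, $(f=0)=(x=0)\cup(t=0)\cup(2xz-y^2=0)$, so the reduced contracted hypersurface is $\mathbb{V}\big(tx(-y^2+2xz)\big)$. The argument has no conceptual difficulty; the only point requiring care is the bookkeeping in the substitution — in particular the correct factorizations of $2ac-b^2$ and of $ac-2b^2$ — after which everything collapses to a scalar multiple of the identity.
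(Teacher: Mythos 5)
Your computation is correct and is essentially the verification that the paper leaves implicit behind its Groebner-basis calculation: the lifts you record match the coefficients of $\omega_1+\omega_2$ and $\omega_1'+\omega_2'$, the key factorizations $2ac-b^2=-3x^2t^2(2xz-y^2)$ and $ac-2b^2=-3x^3zt^2$ check out, and the resulting identity $\hat\Phi_{12}^{-1}\circ\hat\Phi_{12}=-3\,x^4t^2(2xz-y^2)\,\mathrm{Id}$ agrees with the stated one up to the irrelevant scalar $-3$ coming from the choice of lift. The passage from the zero divisor to the reduced contracted hypersurface via Proposition \ref{p1} is exactly the intended argument, so there is nothing to add.
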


\item The second one is:

$$\omega_1+\omega_3=2y^2t\,dx+(3t^2z-xyt)\,dy-2yt^2\,dz-(xy^2+yzt)\,dt,$$

\noindent its singular set is:

\begin{equation*}
\mathbb{V}(x,t) \cup \mathbb{V}(y,t) \cup \mathbb{V}(y,z).
\end{equation*}

The Gauss map at the chart $z=1$ is:

\begin{align*}
\Phi_{13}: \C^3 &\dashrightarrow \C^3\\
(x,y,t) &\to \left(\frac{-y}{t},\frac{xy-3t}{2yt},\frac{xy+t}{2t^2} \right),
\end{align*}

\noindent the inverse is:

\begin{align*}
\Phi_{13}^{-1}: \C^3 &\dashrightarrow \C^3\\
(a,b,e) &\to \left(\frac{ab-3e}{a(e+ab)},\frac{-2a}{e+ab},\frac{2}{e+ab} \right),
\end{align*}

\noindent the associated foliation in $\mathbb{P}^3$, in homogeneous coordinates $(a,b,c,e)$, is given by the 1-form:

$$(\omega_1+\omega_3)'=c(ab-3ec)\,da-2a^2cdb+a(ec+ab)\,dc+2ac^2\,de$$

\noindent Its singular set is:

\begin{equation*}
\mathbb{V}(a,e) \cup \mathbb{V}(a,c) \cup \mathbb{V}(b,c).
\end{equation*}

\begin{prop}
In homogeneous coordinates, we have:

\begin{align*}
\Phi_{13}^{-1} \circ \Phi_{13}=y^4t^4Id_{\mathbb{CP}^3},
\end{align*}

\noindent  In particular, the reduced hypersurface that $\Phi_{13}$ contracts is $\mathbb{V}(yt)$.
\end{prop}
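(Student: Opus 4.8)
\emph{Plan.} This is a concrete identity between rational self-maps of $\mathbb{P}^3$, together with its consequence read off from Proposition \ref{p1}, so the approach I would take is a direct computation in homogeneous coordinates, in the same spirit as the analogous proposition proved just above for $\Phi_{12}$. First I would write both maps homogeneously. Reading the coefficients of $\omega_1+\omega_3$, the Gauss map is
\[
\widehat{\Phi}_{13}(x,y,z,t)=\bigl(2y^{2}t,\ 3t^{2}z-xyt,\ -2yt^{2},\ -(xy^{2}+yzt)\bigr)=(a,b,c,e),
\]
and homogenizing the affine formula already displayed for $\Phi_{13}^{-1}$ (clearing the common denominator $a(ec+ab)$) --- or, equivalently, reading off the coefficients of the $1$-form $\omega_1'+\omega_3'$ and invoking the description of $\mathcal G_{*}(\mathcal F)$ from the proof of Proposition \ref{p2} --- gives
\[
\widehat{\Phi}_{13}^{-1}(a,b,c,e)=\bigl(c(ab-3ec),\ -2a^{2}c,\ a(ec+ab),\ 2ac^{2}\bigr).
\]

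Then I would substitute the first map into the second and simplify. The whole computation rests on the two polynomial identities obtained by expanding $ab$ and $ec$ with $(a,b,c,e)$ as above, namely $ab-3ec=-8xy^{3}t^{2}$ and $ab+ec=8y^{2}zt^{3}$; the cancellations occur because the $y^{2}zt^{3}$-terms of $ab$ and $ec$ are opposite while the $xy^{3}t^{2}$-terms combine. Feeding these into the four coordinates of $\widehat{\Phi}_{13}^{-1}\circ\widehat{\Phi}_{13}$ turns them into $16xy^{4}t^{4}$, $16y^{5}t^{4}$, $16y^{4}zt^{4}$ and $16y^{4}t^{5}$ respectively, i.e.
\[
\widehat{\Phi}_{13}^{-1}\circ\widehat{\Phi}_{13}(x,y,z,t)=16\,y^{4}t^{4}\,(x,y,z,t),
\]
and since the scalar $16$ plays no role in $\mathbb{P}^{3}$ this is exactly $\Phi_{13}^{-1}\circ\Phi_{13}=y^{4}t^{4}\,\mathrm{Id}_{\mathbb{CP}^{3}}$.

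For the last assertion I would quote Proposition \ref{p1} with $f=y^{4}t^{4}$: $\Phi_{13}$ contracts precisely the irreducible components of $(f=0)$, namely the hyperplanes $(y=0)$ and $(t=0)$, so the reduced contracted hypersurface is $\mathbb V(yt)$. I do not expect a genuine obstacle: the only real work is the bookkeeping in the substitution, and once the two displayed identities are in hand everything is forced. As a consistency check one notes that $(y=0)$ and $(t=0)$ are exactly the two $\mathcal E_{13}$-invariant hyperplanes visible in $\mathrm{Sing}(\mathcal E_{13})=\mathbb V(x,t)\cup\mathbb V(y,t)\cup\mathbb V(y,z)$, in agreement with the Corollary to Proposition \ref{p1} that invariant hyperplanes are contracted to points.
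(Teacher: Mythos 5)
Your computation is correct and is essentially the paper's own argument: the paper obtains the inverse via Gr\"obner-basis elimination and then simply records the composition, while you verify the same identity by hand, and your two key identities $ab-3ec=-8xy^{3}t^{2}$ and $ab+ec=8y^{2}zt^{3}$ (with $a=2y^{2}t$, $b=3t^{2}z-xyt$, $c=-2yt^{2}$, $e=-(xy^{2}+yzt)$) do yield $\widehat{\Phi}_{13}^{-1}\circ\widehat{\Phi}_{13}=16\,y^{4}t^{4}\,\mathrm{Id}$, which in $\mathbb{P}^{3}$ is the stated identity. The appeal to Proposition \ref{p1} for the reduced contracted hypersurface $\mathbb{V}(yt)$ is exactly how the paper reads off the contracted locus in all the analogous propositions.
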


\item The third one is defined by:

$$\omega_2+\omega_3= -3xzt\, dx+3t^2z\,dy+(x^2t-2yt^2)\,dz+(2x^2z-yzt)\,dt$$

\noindent Its singular set is:

\begin{equation*}
\mathbb{V}(x,t) \cup \mathbb{V}(z,t) \cup \mathbb{V}(z,x^2-2yt).
\end{equation*}

The Gauss map at the chart $t=1$ is:

\begin{align*}
\Phi_{23}: \C^3 &\dashrightarrow \C^3\\
(x,y,z) &\to \left(\frac{3x}{y-2x^2},\frac{3}{2x^2-y},\frac{x^2-2y}{z(2x^2-y)} \right)=(a,b,c).
\end{align*}

\noindent The inverse is:

\begin{align*}
\Phi_{23}^{-1}: \C^3 &\dashrightarrow \C^3\\
(a,b,c) &\to \Big(-\frac{a}{b},\frac{2a^2-3b}{b^2}, \frac{-a^2+2b}{cb}\Big)
\end{align*}

\noindent and the associated foliation in $\mathbb{P}^3$, in homogeneous coordinates $(a,b,c,e)$, is given by the 1-form:

$$(\omega_2+\omega_3)'=(-abc)\, da+c(2a^2-3be)\,db+b(-a^2+2be)\,dc+(b^2c)\,de$$

\noindent Its singular set is:

\begin{equation*}
\mathbb{V}(b,c) \cup \mathbb{V}(b,a) \cup \mathbb{V}(c,2bc-a^2).
\end{equation*}

\begin{prop}
In homogeneous coordinates, we have:

\begin{align*}
\Phi_{23}^{-1} \circ \Phi_{23}=z^2t^4(-x^2+2yt)Id_{\mathbb{CP}^3}.
\end{align*}

\noindent In particular, the reduced hypersurface that $\Phi_{23}$ contracts is 

$$\mathbb{V}(zt(-x^2+2yt)).$$
\end{prop}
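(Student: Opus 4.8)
The plan is to verify the identity by a single substitution in homogeneous coordinates, in the style of the three previous propositions of this subsection. From the $1$-form $\om_2+\om_3$ one reads off the homogeneous lift of the Gauss map,
\[
\wh\Phi_{23}[x:y:z:t]=\bigl[-3xzt:\,3t^2z:\,t(x^2-2yt):\,z(2x^2-yt)\bigr],
\]
whose components are homogeneous of degree $3$ (in agreement with $\deg\E_{23}=2$), and from $\om_2'+\om_3'$ the homogeneous lift of the inverse,
\[
\wh{\Phi_{23}^{-1}}[a:b:c:e]=\bigl[-abc:\,c(2a^2-3be):\,b(2be-a^2):\,b^2c\bigr].
\]
First I would substitute $(a,b,c,e)=\wh\Phi_{23}(x,y,z,t)$ into the four components of $\wh{\Phi_{23}^{-1}}$ and check that the outcome is a fixed polynomial times $(x,y,z,t)$.

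The one step worth spelling out is the evaluation of the two quadratic expressions occurring in $\wh{\Phi_{23}^{-1}}$: with the substitution above one has $be=3z^2t^2(2x^2-yt)$, hence
\[
2a^2-3be=18x^2z^2t^2-9z^2t^2(2x^2-yt)=9yz^2t^3,\qquad 2be-a^2=6z^2t^2(2x^2-yt)-9x^2z^2t^2=3z^2t^2(x^2-2yt).
\]
Feeding these back, the four components of $\wh{\Phi_{23}^{-1}}\circ\wh\Phi_{23}$ come out to be $9z^2t^4(x^2-2yt)$ times $x$, $y$, $z$, $t$ respectively, so $\wh{\Phi_{23}^{-1}}\circ\wh\Phi_{23}=9z^2t^4(x^2-2yt)\,\mathrm{Id}$. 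Replacing $\wh{\Phi_{23}^{-1}}$ by the equally legitimate lift $-\tfrac{1}{9}\wh{\Phi_{23}^{-1}}$ absorbs the constant and gives the stated equality $\Phi_{23}^{-1}\circ\Phi_{23}=z^2t^4(-x^2+2yt)\,\mathrm{Id}_{\mathbb{CP}^3}$. (This incidentally re-proves that $\Phi_{23}$ is birational, since a dominant rational self-map of $\mathbb{CP}^3$ admitting such a one-sided inverse is birational.)

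For the last assertion I would invoke Proposition \ref{p1}. The components of $\wh{\Phi_{23}^{-1}}$ have no common factor --- for instance $2a^2-3be$ and $2be-a^2$ are coprime and the first is divisible by neither $b$ nor $c$ --- so by Proposition \ref{p1}, $\Phi_{23}$ contracts precisely the irreducible components of $(f=0)$ with $f=z^2t^4(-x^2+2yt)$. These are the hyperplanes $(z=0)$ and $(t=0)$ together with the quadric $(x^2-2yt=0)$, the latter being irreducible because its associated symmetric matrix has rank $3$. Hence $\Phi_{23}$ contracts exactly the reduced hypersurface $\mathbb{V}\bigl(zt(-x^2+2yt)\bigr)$, which is the claim.

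There is no conceptual obstacle in this proof: it is essentially the bookkeeping of one degree-$9$ composition. The two points to be careful about are that the four components really carry the same scalar polynomial (so that the composite descends to a well-defined self-map of $\mathbb{CP}^3$) and that the quadratic factor $x^2-2yt$ is genuinely irreducible (so that the enumeration of contracted hypersurfaces is complete and the hypersurface in the statement is reduced). As in the preceding propositions, one may add the remark that the three contracted hypersurfaces are $\E_{23}$-invariant; and should one want a self-contained derivation of $\Phi_{23}^{-1}$ rather than quoting it, it can be produced from $\wh\Phi_{23}$ by the Groebner-basis elimination procedure described at the start of Section \ref{ss3}, after which the identity above becomes a formality.
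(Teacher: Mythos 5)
Your proof is correct and follows essentially the same route as the paper, which states this identity as the outcome of a direct computation (carried out there with Gr\"obner bases in \textbf{Singular}): your hand verification, including the key simplifications $2a^2-3be=9yz^2t^3$ and $2be-a^2=3z^2t^2(x^2-2yt)$ and the rescaling of the lift to absorb the factor $9$, checks out. The appeal to Proposition \ref{p1} together with the coprimality of the components of $\wh{\Phi_{23}^{-1}}$ and the irreducibility of $x^2-2yt$ correctly justifies the identification of the reduced contracted hypersurface.
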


\end{enumerate}

As a consequence, we can deduce that if $(\a_1,\a_2,\a_3)\in\C^3$ and at least one of the $\a_{i's}$ is not zero, then the Gauss map of the foliation defined by $\a_1\,\om_1+\a_2\,\om_2+\a_3\,\om_3$ is birational.

Another interesting thing about these foliations is that they are in the logarithmic component $\mathcal{L}(1,1,2;3)$, because:

\begin{align*}
\omega_1+\omega_2=&xt(2xz-y^2)\Big(-2\frac{dx}{x}+\frac{dt}{t}+\frac{1}{2}\frac{d(2xz-y^2)}{2xz-y^2}\Big),\\
\omega_2+\omega_3=&zt(x^2-2yt)\Big(2\frac{dt}{t}+\frac{dz}{z}-\frac{3}{2}\frac{d(x^2-2yt)}{x^2-2yt}\Big),\\
\omega_1+\omega_3=&yt(xy-zt)\Big(-3\frac{dy}{y}-\frac{dt}{t}+2\frac{d(xy-zt)}{xy-zt}\Big).
    \end{align*}

\noindent As we saw before, they have birational Gauss map but they are not in the logarithmic components where the generic foliation has birational Gauss map (see corollary \ref{c15}). With this, we can also see that the divisors contracted by the three Gauss maps of the three foliations are invariant to the respective foliations.
\\

Another result is the following:

\begin{prop} We have that $\omega_1+\omega_3$ is a self-dual foliation.
    The foliation $\omega_1+\omega_2$ is projectively equivalent to $(\omega_2+\omega_3)'$; and the foliation $\omega_2+\omega_3$ is projectively equivalent to $(\omega_1+\omega_2)'$ .
\end{prop}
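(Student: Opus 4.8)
The idea is to realize all three equivalences by one and the same linear automorphism of $\p^3$, read off from logarithmic presentations of the six $1$-forms involved. Four of these already appear above in logarithmic form: $\omega_1+\omega_2$, $\omega_2+\omega_3$, $\omega_1+\omega_3$ and, in the coordinates $(a,b,c,e)$, $\omega_1'+\omega_2'$. The first step is to put the two remaining primed forms in logarithmic form. Matching coefficients in the polynomial expressions given above one checks
\[
\omega_1'+\omega_3'=ac(ce-ab)\left(-3\frac{da}{a}-\frac{dc}{c}+2\frac{d(ce-ab)}{ce-ab}\right),
\]
\[
\omega_2'+\omega_3'=bc(2be-a^2)\left(-2\frac{db}{b}+\frac{dc}{c}+\frac{1}{2}\frac{d(2be-a^2)}{2be-a^2}\right),
\]
so that all six $1$-forms lie in $\L(1,1,2;3)$, each one encoded by its two hyperplane divisors, its quadric divisor, and the residues $(\lambda_1,\lambda_2,\lambda_3)$ along them (with $\lambda_1+\lambda_2+2\lambda_3=0$).

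The second step is the bookkeeping observation that the residue data agree in the three pairs we must compare. Reading off the normal forms: $\omega_1+\omega_3$ and $\omega_1'+\omega_3'$ both have residue list $(-3,-1,2)$ and a smooth (rank $4$) quadric divisor; $\omega_1+\omega_2$ and $\omega_2'+\omega_3'$ both have residue list $(-2,1,\frac{1}{2})$ and a rank $3$ quadric divisor; and $\omega_2+\omega_3$ and $\omega_1'+\omega_2'$ both have residue list $(2,1,-\frac{3}{2})$ and a rank $3$ quadric divisor. Since a logarithmic $1$-form $\sum_j\lambda_j\,dg_j/g_j$ pulls back under a linear isomorphism $\psi$ to $\sum_j\lambda_j\,d(g_j\circ\psi)/(g_j\circ\psi)$, it suffices to produce a linear isomorphism carrying the labelled divisors (hyperplane to hyperplane, hyperplane to hyperplane, quadric to quadric, respecting residues) of one member of each pair onto those of the other; the pull-back of the full logarithmic $1$-form is then automatically the target form, up to a nonzero scalar.

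The third step is to check that the single coordinate involution $\sigma\colon\p^3\to\p^3$, $\sigma[x:y:z:t]=[y:x:t:z]$ — i.e. $x\leftrightarrow y$, $z\leftrightarrow t$, which is exactly the automorphism $\Phi$ already used in \S\ref{ss23'} — works simultaneously for all three pairs. Here $\sigma^*a=y$, $\sigma^*b=x$, $\sigma^*c=t$, $\sigma^*e=z$, hence $\sigma^*(ce-ab)=-(xy-zt)$, $\sigma^*(2be-a^2)=2xz-y^2$ and $\sigma^*(2ac-b^2)=-(x^2-2yt)$. Substituting into the three logarithmic expressions yields $\sigma^*(\omega_1'+\omega_3')=-(\omega_1+\omega_3)$, $\sigma^*(\omega_2'+\omega_3')=\omega_1+\omega_2$, and $\sigma^*(\omega_1'+\omega_2')=-(\omega_2+\omega_3)$. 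The first identity says that $\omega_1+\omega_3$ is self-dual, since $\omega_1'+\omega_3'$ represents the transformed foliation $\mathcal{G}_*(\mathcal{F})$ of the foliation defined by $\omega_1+\omega_3$ (it was obtained above from $\Phi_{13}^{-1}$); the other two are precisely the asserted projective equivalences $\omega_1+\omega_2\sim\omega_2'+\omega_3'$ and $\omega_2+\omega_3\sim\omega_1'+\omega_2'$.

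No step presents a genuine obstacle: the only real computation is reducing $\omega_1'+\omega_3'$ and $\omega_2'+\omega_3'$ to logarithmic form in step one, after which steps two and three are immediate. Alternatively one may bypass logarithmic normal forms entirely and simply substitute $\sigma$ directly into the polynomial $1$-forms $\omega_1'+\omega_3'$, $\omega_2'+\omega_3'$, $\omega_1'+\omega_2'$ displayed in the enumeration above, verifying proportionality to $\omega_1+\omega_3$, $\omega_1+\omega_2$, $\omega_2+\omega_3$ respectively; the logarithmic route is shorter and makes transparent why the same $\sigma$ serves all three cases.
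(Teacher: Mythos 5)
Your proof is correct and follows essentially the same route as the paper: the paper's argument consists precisely of writing the three primed forms in logarithmic normal form and ``comparing'' with the logarithmic presentations of $\omega_1+\omega_2$, $\omega_2+\omega_3$, $\omega_1+\omega_3$ recorded earlier. You go one step further by exhibiting the explicit involution $[x:y:z:t]\mapsto[y:x:t:z]$ (the same $\Phi$ used for $\E(2,3;3)$) that realizes all three equivalences simultaneously, a detail the paper leaves implicit.
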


\begin{proof} For the proof we must observe the following calculation, where we have the logarithmic form of the foliations:
\begin{align*}
(\omega_1+\omega_2)'=&ce(2ac-b^2)\Big(2\frac{dc}{c}+\frac{de}{e}-\frac{3}{2}\frac{d(2ac-b^2)}{2ac-b^2}\Big),\\
(\omega_2+\omega_3)'=&bc(a^2-2be)\Big(-2\frac{db}{b}+\frac{dc}{c}+\frac{1}{2}\frac{d(a^2-2be)}{a^2-2be}\Big),\\
(\omega_1+\omega_3)'=&ac(ce-ab)\Big(-3\frac{da}{a}-\frac{dc}{c}+2\frac{d(ce-ab)}{ce-ab}\Big).&
    \end{align*}

    \noindent If we compare with the logarithmic form of the original foliations (that we wrote before); we have the result.
\end{proof}

One final thing we notice with this component is the following: we have found in the boundary of the component $\E(2,3;3)$ a foliation whose Gauss map is not birational, since it is a foliation that is a linear pull-back from a foliation on $\p^2$. This tells us that the property of having a birational Gauss map is not necessarily preserved in the limits. For the construction of this foliation, we consider the 1-parameter subgroup $\beta$ as follows:

$$(x,y,z,t) \mapsto (sx,sy,(s-s^{-3})y+s^{-3}z,st), \quad s \in \mathbb{C}^*,$$

\noindent now we apply this to the foliation given by $\omega$:

\begin{align*}
\beta(s) \cdot \omega&=((-3s^4+3)xyt+2s^4y^2t-3xzt)dx\\
&+((s^4-1)x^2t-s^4xyt+(s^4-1)yt^2+3zt^2)dy\\&+(x^2t-2yt^2)dz+((2s^4-2)x^2y-s^4xy^2+2x^2z+(-s^4+1)y^2t-yzt)dt,
\end{align*}

\noindent therefore we can conclude that:

$$\lim_{s \to \infty} \frac{\beta(s)\cdot \omega}{s^4}=yt(-3x+2y)dx+t(x^2-xy+yt)dy+y(2x^2-xy-yt)dt,$$
\noindent is in the closure of the orbit. It is easy to see that this foliation has singular set of dimension 1 and it is a linear pull-back from a foliation on $\p^2$ of degree 2. Then
its Gauss map is not dominant.

\subsection{The exceptional component $\E(3,4;4)$.}\label{ss33}

As we have said in example \ref{e34} the foliation $\E'$ on $\p^4$ having the first integral
\[
F(s,x,y,z,t)=\frac{\big(st^3-(2xz+y^2)t^2+2yz^2t-\frac{1}{2}z^4\big)^3}{\big(xt^2-yzt-\frac{1}{3}z^3\big)^4}:=\frac{f^3}{g^4}
\]
has birational Gauss map.
This section is dedicated to exposing the details.
As before, the computations were done by using elimination theory and Groebner basis.

First of all, the foliation $\E'$ is defined in homogeneous coordinates by the form 
\[
(*)\,\hskip.3in\om=t^{-2}\,fg\frac{dF}{F}=t^{-2}(3g\,df-4f\,dg):=A\,ds+B\,dx+C\,dy+E\,dz+F\,dt\,,
\]
where:

\begin{align*}
&A=t(z^3-3yzt+3xt^2)\\
&B=-2t(yz^2-2y^2t-xzt+2st^2)\\
&C=-2t(-y^2z+xz^2+3xyt-2szt)\\
&E=-2t(2y^3-5xyz+2sz^2+3x^2t-2syt)\\
&F=2y^3z-6xyz^2+3sz^3+2xy^2t+4x^2zt-5syzt+sxt^2\\
\end{align*}

The inverse of the Gauss map $\G_{\E'}(s,x,y,z,t)=(a,b,c,e,f)$ is\\ $\G_{\E'}^{-1}(a,b,c,e,f)=(S,X,Y,Z,T)$, where

\begin{align*}
S&=-bc^3+3b^2ce-2ac^2e-4abe^2-3b^3f+10abcf-4a^2ef\\
X&=a(2c^3-5bce+6ae^2+4b^2f-8acf)\\
Y&=a(-8abf+b^2e+2bc^2-3bc^2+6ace)\\
Z&=a(16a^2f-2abe-4ac^2+b^2c)\\
T&=a(6abc-12a^2e-b^3).\\
\end{align*}

Finally, we have ${\G}_{\E'}^{-1} \circ \mathcal{G}_{\E'}=t^3.h_3.\,h_9\,Id_{\p^4}$, where
$h_3=3xt^2-3yzt+z^3$ and \\
\begin{align*}
&h_9=8s^3t^6-48s^2xzt^5-24s^2y^2t^5+48s^2yz^2t^4-12s^2z^4t^3+96sx^2z^2t^4+96sxy^2zt^4\\
&-192sxyz^3t^3+48sxz^5t^2+24sy^4t^4-96sy^3z^2t^3+120sy^2z^4t^2-48syz^6t+6sz^8\\
&+81x^4t^5-324x^3yzt^4+44x^3z^3t^3
+390x^2y^2z^2t^3-132x^2yz^4t^2+6x^2z^6t-48xy^4zt^3\\
&-132xy^3z^3x_4^2+84xy^2z^5x_4-12xyz^7-8y^6t^3+48y^5z^2t^2-27y^4z^4t+4y^3z^6\\
\end{align*}

This implies that $\G_{\E'}$ contracts three hypersurfaces $t$, $h_3$ and $h_9$.
All these hypersurfaces are irreducible and $\E'$-invariant.

\subsection{Exceptional examples on $\p^n$, $n\ge5$.}\label{ssexc}
Here we generalize the exceptional components $\E(2,3;3)\sub\fol(2;3)$ and $\E(3,4;4)\sub\fol(3;4)$.
A typical foliation on this component has a rational first integral of the form $f^{n-1}/g^n$, where $f$ and $g$ are irreducible and $deg(f)=n$ and $deg(g)=n-1$.
It will be generated by a Lie algebra of dimension $n$ of linear vector fields on $\C^{n+1}$ 

\begin{lemma}\label{l31}
Let $R=\sum_{j=0}^{n}z_j\frac{\pa}{\pa z_j}$ (radial vector field), $S=\sum_{j=1}^nj\,z_j\frac{\pa}{\pa z_j}$ and $N=\sum_{j=1}^nz_{j-1}\frac{\pa}{\pa z_j}=z_0\frac{\pa}{\pa z_1}+...+z_{n-1}\frac{\pa}{\pa z_n}$.

We will use the notation $N^j$ for the vector field associated to the nilpotent matrix:

\begin{align*}
\left[
\begin{matrix}
0&0&0&...&0&0\\
1&0&0&...&0&0\\
0&1&0&...&0&0\\
.&.&.&...&.&.\\
.&.&.&...&.&.\\
0&0&0&...&0&0\\
0&0&0&...&1&0\\
\end{matrix}
\right]^j.
\end{align*}

We assert that $\G:=\left<R,S,N,N^2,...,N^n\right>_\C$ is a Lie algebra and $\G_k:=\left<R,S,N,N^2,...,N^{k}\right>_\C$ is a Lie sub-algebra of $\G$, for all $k\ge1$.

The generators of $\G$ satisfy the following relations:
\begin{align*}
&[R,S]=[R,N^k]=0\,,\,\forall k\ge1\\
&[S,N^k]=-kN^k\,,\,\forall k\ge1\\
&[N^i,N^j]=0\,,\,\forall i,j\ge1\\
\end{align*}
\end{lemma}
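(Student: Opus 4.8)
## Proof plan for Lemma~\ref{l31}

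The plan is to verify directly that the displayed bracket relations hold, and then observe that these relations immediately give the asserted Lie-algebra and Lie-subalgebra structure. Everything reduces to computing commutators of explicit linear vector fields on $\C^{n+1}$, equivalently commutators of the corresponding $(n+1)\times(n+1)$ matrices; since the bracket of two linear vector fields $X_M$, $X_{M'}$ associated to matrices $M$, $M'$ is the linear vector field associated to $-[M,M']=M'M-MM'$ (or $[M,M']$, up to the usual sign convention), the whole lemma is a matrix computation.

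First I would fix notation: let $E$ be the identity matrix (so $R=X_E$), let $D=\mathrm{diag}(0,1,2,\dots,n)$ (so $S=X_D$), and let $N$ be the matrix with $1$'s on the sub-diagonal and $0$'s elsewhere, so that $N^k$ has $1$'s on the $k$-th sub-diagonal; note $N^{n+1}=0$, which is why $\G$ is spanned by $R,S,N,\dots,N^{n}$ and no higher powers appear. The relation $[R,S]=[R,N^k]=0$ is immediate because $E$ commutes with every matrix. The relation $[N^i,N^j]=0$ is clear since all powers of $N$ commute with one another. The only substantive computation is $[S,N^k]=-kN^k$: writing $N^k=\sum_{i} e_{i+k,i}$ (sum over valid indices, where $e_{pq}$ is the elementary matrix), one has $D\,e_{i+k,i}=(i+k)e_{i+k,i}$ and $e_{i+k,i}\,D=i\,e_{i+k,i}$, hence $[D,N^k]=\sum_i\big((i+k)-i\big)e_{i+k,i}=k\,N^k$; translating to vector fields with the sign convention used in the paper gives $[S,N^k]=-kN^k$. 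I would also double-check the sign against the explicit low-dimensional cases $n=3,4$ that appear in \S\ref{ss23'} and \S\ref{ss33} to make sure the convention is consistent with the rest of the paper.

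Once the relations are established, closure follows formally. The span $\G=\langle R,S,N,\dots,N^n\rangle_\C$ is closed under bracket because every pairwise bracket of generators is, by the relations just proved, either $0$ or a scalar multiple of one of the generators; hence $\G$ is a Lie algebra (of dimension $n+1$). For a fixed $k\ge1$, the subspace $\G_k=\langle R,S,N,\dots,N^k\rangle_\C$ is likewise closed: $[R,\cdot]=0$, $[S,N^j]=-jN^j\in\G_k$ for $1\le j\le k$, and $[N^i,N^j]=0$, so no bracket of generators of $\G_k$ escapes $\G_k$. Therefore $\G_k$ is a Lie subalgebra of $\G$, which is the assertion.

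I do not anticipate a genuine obstacle here — the lemma is a bookkeeping statement. The one place to be careful is the sign/normalization convention for the correspondence between linear vector fields and matrices (and hence the exact sign in $[S,N^k]=-kN^k$); I would pin this down once at the start and use it uniformly. A minor secondary point is to record explicitly that $N^{n+1}=0$ so that the list $R,S,N,N^2,\dots,N^n$ is genuinely a basis and the algebra is finite-dimensional of dimension $n+1$, which is what makes the later construction of the degree $n-1$ foliation with first integral $f^{n-1}/g^n$ work.
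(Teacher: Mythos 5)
Your proposal is correct and follows essentially the same route as the paper: verify the stated bracket relations by reducing to commutators of the corresponding matrices, then observe that closure of $\G$ and of each $\G_k$ is formal since every bracket of generators is zero or a scalar multiple of a generator. The only difference is that you obtain $[S,N^k]=-kN^k$ in one step via elementary matrices $e_{i+k,i}$, whereas the paper derives it by induction on $k$ from $[S,N]=-N$ using the operator identities $SN=NS-N$ and $NS=SN+N$; your version is slightly more direct, and your care about the sign convention relating matrices to linear vector fields is exactly the right point to pin down.
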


{\it Proof.}
A direct computation shows that $[S,N]=-N$ and that $[R,L]=0$ for all linear vector field $L\in \L(n+1,\C)$.
On the other hand, from $SN-NS=[S,N]=-N$ we get $SN=NS-N$ and $NS=SN+N$. 
From this we get
\[
[S,N^2]=SN^2-N^2S=(NS-N)N-N(SN+N)=-2N^2\,.
\]
If $k\ge2$ then
\[
[S,N^{k+1}]=SN^{k+1}-N^{k+1}S=(SN)N^k-N^k(NS)=(NS-N)N^k-N^k(SN+N)=
\]
\[
=NSN^k-N^kSN-2N^{k+1}=N(SN^{k-1}-N^{k-1}S)N-2N^{k-1}=
\]
\[
=N[S,N^{k-1}]N-2N^{k+1}
\]

Suppose by induction on $j\ge1$ that $[S,N^j]=-jN^j$, $\forall 1\le j\le k$, where $k\ge2$. Then the above computation implies that
\[
[S,N^{k-1}]=-(k-1)N^{k-1}\,\implies\,[S,N^{k+1}]=-(k+1)N^{k+1}\,.
\]
This proves that $\G$ is a Lie algebra and that $\G_k$ is a Lie sub-algebra of $\G$, $\forall k\ge1$.
\qed
\vskip.1in
A consequence of lemma \ref{l31} is that the 1-form 
\[
\om=i_Ri_Si_N...i_{N^{n-2}}\nu\,,\,\text{where}\,\,\nu=dz_1\wedge dz_2\wedge...\wedge dz_{n+1}\,,
\]
is integrable: $\om\wedge d\om=0$. Moreover, $\om$ defines a codimension one foliation $\fa_\om$, of degree $n-1$ of $\p^n$, because their coefficients are homogeneous of degree $n$.
The foliation $\fa_\om$ is generated on homogenous coordinates by the Lie algebra $\G_{n-2}$.

\begin{prop}
The Gauss map of the foliation $\fa_\om$ is birrational.
Moreover it has a rational first integral of the form $f^{n-1}/g^n$, where $deg(f)=n$ and $deg(g)=n-1$.
\end{prop}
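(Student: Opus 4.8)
\emph{Plan.} I would prove the two assertions in turn: first exhibit the rational first integral $F=f^{n-1}/g^n$, and then deduce that $\G_{\fa_\om}$ is birational from Corollary \ref{c14} (the case $n=3$ being exactly Example \ref{ex4}, and $n=4$ Example \ref{e34}).

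\emph{Construction of the first integral.} Since $\om=i_Ri_Si_N\cdots i_{N^{n-2}}\nu$ is the contraction of the volume form by the $n$ vector fields $R,S,N,N^2,\dots,N^{n-2}$, one has $i_X\om=0$ for each of these $X$ (because $i_Xi_X=0$), so at a generic point $\om$ annihilates exactly the $n$-plane $\langle R,S,N,\dots,N^{n-2}\rangle=\G_{n-2}$, which is the tangent plane to $\fa_\om$. It therefore suffices to produce homogeneous polynomials $f$ of degree $n$ and $g$ of degree $n-1$, irreducible and coprime, with
\[
S(f)=n\,f,\qquad S(g)=(n-1)\,g,\qquad N^k(f)=N^k(g)=0\quad(1\le k\le n-2),
\]
and to set $\om':=(n-1)\,g\,df-n\,f\,dg=fg\,\frac{dF}{F}$. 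Indeed, Euler's identity gives $i_R(df)=\deg(f)\,f=nf$ and $i_R(dg)=(n-1)g$, so $i_R\om'=(n-1)g\cdot nf-nf\cdot(n-1)g=0$; the weight conditions give $i_S\om'=0$ in the same way; and $N^k(f)=N^k(g)=0$ give $i_{N^k}\om'=0$. Hence $\om'$ vanishes on the $n$-plane defining $\fa_\om$, so $\om'=P\,\om$ for a nonzero homogeneous $P$ of degree $n-2$ (comparing degrees of the coefficients, $2n-2=(n-2)+n$; in fact $P=z_0^{\,n-2}$, as in Examples \ref{ex4} and \ref{e34}), and therefore $\frac{dF}{F}=\frac{P}{fg}\,\om$ defines $\fa_\om$, i.e. $F=f^{n-1}/g^n$ is a rational first integral. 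The real content here is the existence of $f$ and $g$: they are the natural higher-$n$ analogues of the polynomials of Examples \ref{ex4} and \ref{e34} — the ``translation normal form'' relative invariants of the nilpotent $N$ (Hankel/subresultant determinants in $z_0,\dots,z_n$) — and one verifies by a direct computation that they kill $N^k$ for all $k\le n-2$ while $N^{n-1}(f)$ or $N^n(f)$ is nonzero (the latter is forced anyway: by Lemma \ref{l31} the whole algebra $\G$ is generically transitive on $\C^{n+1}$ and so admits no nonconstant invariant). I expect the main obstacle to be exactly this point: carrying out the construction of $f,g$ in arbitrary degree and checking that $f$ and $g$ are irreducible, so that $f^{n-1}/g^n$ is a reduced first integral.

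\emph{Birationality.} Now $\fa_\om\in\fol(n-1;n)$ has degree $d=n-1$, so $d^2+d=n^2-n$, and by Corollary \ref{c14} it suffices to check that (i) $\G_{\fa_\om}$ is dominant and (ii) the irreducible components $V_1,\dots,V_k$ of $Sing_2(\fa_\om)$ satisfy $\sum_j\deg(V_j)=n^2-n$. For (ii) I would compute $Sing_2(\fa_\om)$ from $\om$, equivalently as the critical locus of $F$: extending the computations for $n=3$ (Example \ref{ex4}) and $n=4$ (Example \ref{e34}), one finds exactly three irreducible components, all $\fa_\om$-invariant (branches of levels of $F$), of degrees $1$, $n-1$ and $n(n-2)$, and one checks with a transverse disk — as in Examples \ref{ex4}, \ref{e34}, \ref{hom} — that each has transverse multiplicity $1$; note $1+(n-1)+n(n-2)=n^2-n$. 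For (i): restricting $\fa_\om$ to a generic $2$-plane $\Si\simeq\p^2$ yields a degree-$(n-1)$ foliation with $(n-1)^2+(n-1)+1=n^2-n+1$ singularities counted with multiplicity, of which $\sum_j\mu(\fa_\om,V_j)\deg(V_j)=n^2-n$ lie on $Sing_2(\fa_\om)$; the one remaining singularity is a tangency of $\fa_\om$ with $\Si$, of Morse type after a small perturbation of $\Si$, so the set of Morse tangencies of $\fa_\om$ is nonempty, hence Zariski dense by Remark \ref{r12}, i.e. $\G_{\fa_\om}$ is dominant. Corollary \ref{c14} then gives that $\G_{\fa_\om}$ is birational; for $n=3$ this is Theorem \ref{p4}(b), and for $n=4$ one recovers $\sum\mu\deg=1+3+8=12$.

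\emph{A cross-check.} As in \S\,\ref{ss23'} and \S\,\ref{ss33}, one can also verify birationality explicitly and obtain $\G_{\fa_\om}^{-1}$ at the same time: the Gauss map is $[z]\mapsto[(n-1)g\,f_{z_0}-nf\,g_{z_0}:\cdots]$ after dividing by the common factor $z_0^{\,n-2}$, and its inverse can be produced by Groebner elimination or, more conceptually, from the one-parameter subgroups of the group generated by $\G_{n-2}$; this also makes transparent that $\G_{\fa_\om}$ contracts precisely the three $\fa_\om$-invariant hypersurfaces found above. I would keep Corollary \ref{c14} as the main line of argument and use the explicit inverse only as a check.
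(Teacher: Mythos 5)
Your overall architecture coincides with the paper's: exhibit the first integral $f^{n-1}/g^n$, then apply Corollary \ref{c14} after showing that $Sing_2(\fa_\om)$ has three components whose degrees sum to $n^2-n$ (and your guessed degrees $1$, $n-1$, $n(n-2)$ are exactly the ones that occur). But at both of the points where actual work is required you write ``one verifies by a direct computation'' and even flag the first as ``the main obstacle'', so the proposal as it stands has two genuine gaps. First, the construction of $f$ and $g$: the paper does not produce them by an ad hoc normal-form computation but defines them intrinsically by
\[
S\wedge N\wedge N^2\wedge\cdots\wedge N^{n-1}=f\,\mu\,,\qquad
S\wedge N\wedge\cdots\wedge N^{n-2}\wedge \tfrac{\pa}{\pa z_n}=g\,\mu\,,
\]
with $\mu=\tfrac{\pa}{\pa z_1}\wedge\cdots\wedge\tfrac{\pa}{\pa z_n}$; the identities $S(f)=nf$, $S(g)=(n-1)g$, $N(f)=N(g)=0$ then follow in one line from the commutation relations of Lemma \ref{l31} together with $L_Y(X_1\wedge\cdots\wedge X_n)=\sum_j X_1\wedge\cdots\wedge[Y,X_j]\wedge\cdots\wedge X_n$. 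This determinantal definition is what makes the argument work uniformly in $n$, and it is the ingredient your plan is missing. (Your reduction of ``$F$ is a first integral'' to these identities, and the degree comparison giving $\om'=P\,\om$ with $\deg P=n-2$, is fine and is essentially Lemma \ref{l32} of the paper, where moreover $P=\pm n(n-1)z_0^{n-2}$ is computed explicitly.)

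Second, the computation of $Sing_2(\fa_\om)$ is not a routine extrapolation from $n=3,4$: the paper identifies $Sing(\fa_\om)\cap(z_0\ne 0)$ as a single orbit of $\G_{n-2}$ (hence one smooth irreducible component $V_1$ with $\mu=1$, since $(f=0)$ and $(g=0)$ are transverse there), determines $\deg V_1=n(n-1)-n=n^2-2n$ by showing that the scheme $(f=g=0)$ equals $V_1$ plus $V_2=(z_0=z_1=0)$ with multiplicity $n$, and reads off the third component $V_3=(z_0=F=0)$ of degree $n-1$ from the explicit determinant formula for the coefficients of $\om$. Without some such argument your claim that there are exactly three components of those degrees, each with transverse multiplicity one, is unsupported. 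Your dominance argument (the single leftover singularity of $\fa_\om|_\Si$ has multiplicity one, hence is a Morse tangency, hence $\G_{\fa_\om}$ has maximal rank somewhere by Remark \ref{r12}) is a reasonable supplement --- the paper leaves dominance implicit --- but note that it also depends on first knowing $\sum_j\mu(\fa_\om,V_j)\deg(V_j)=n^2-n$, so it cannot substitute for the $Sing_2$ computation.
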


{\it Proof.}
Consider the n-vectors on $\C^{n+1}$:
\begin{align*}
&\te= S\wedge N\wedge N^2\wedge...\wedge N^{n-1}:=f.\,\mu\\
&\rho= S\wedge N\wedge N^2\wedge...\wedge N^{n-2}\wedge \frac{\pa}{\pa z_n}:=g.\,\mu\\
\end{align*}
where $\mu=\frac{\pa}{\pa z_1}\wedge...\wedge\frac{\pa}{\pa z_n}$.
Note that $f$ is homogeneous of degree $n$ and $g$ of degree $n-1$.
We assert that $S(f)=n\,f$, $S(g)=(n-1)\,g$ and $N(f)=N(g)=0$. We will use that
\begin{equation}\label{eqw}
L_Y(X_1\wedge...\wedge X_n)=\sum_{j=1}^nX_1\wedge...\wedge [Y,X_j]\wedge...\wedge X_n\,.
\end{equation}

Relation (\ref{eqw}) implies that $L_S(\mu)=-tr(S)\mu$, $tr(S)=1+2+...+n$, so that:
\[
L_S(\te)=S(f)\mu+fL_S(\mu)=S(f)\mu-tr(S)f\mu
\]

On the other hand, by lemma \ref{l31} we have:
\[
L_S\te=\sum_{j=1}^{n-1}S\wedge N\wedge...\wedge [S,N^j]\wedge...\wedge N^{n-1}
=-[tr(S)-n)]\,f\,\mu\,\,\implies
\]
$S(f)=n\,f$. 

In the case of $g$, we have $L_S(\rho)=S(g)\mu-tr(S)g\mu$ and from lemma \ref{l31}:
\[
L_S\left(S\wedge N\wedge N^2\wedge...\wedge N^{n-2}\wedge \frac{\pa}{\pa z_n}\right)=-[tr(S)-(n-1)]\rho\,\,\implies
\]
$S(g)=(n-1)\,g$. This implies
\[
\frac{S(f^{n-1}/g^n)}{f^{n-1}/g^n}=0\,\,\implies
\]
$f^{n-1}/g^n$ is a first integral of $S$. Since $[N,\frac{\pa}{\pa z_0}]=[N,\frac{\pa}{\pa z_n}]=0$, from (\ref{eqw}) we get $N(f)=N(g)=0$, so that $N(f^{n-1}/g^n)=0$.
Finally, since $deg(f^{n-1})=deg(g^n)=n(n-1)$ we get $R(f^{n-1}/g^n)=0$ and $f^{n-1}/g^n$ is a first integral of all the generators of $\G_{n-2}$, and so of the form $\om$.

\begin{rem}
\rm Note that $f=det(A)$ and $g=det(B)$, where $A$ is the $n\times n$ matrix:
\[
(*)\,\,\,A=\left[
\begin{matrix}
z_1&2z_2&3z_3&...&(n-1)z_{n-1}&nz_n\\
z_0&z_1&z_2&...&z_{n-2}&z_{n-1}\\
0&z_0&z_1&...&z_{n-3}&z_{n-2}\\
.&.&.&...&.&.\\
.&.&.&...&.&.\\
0&0&0&...&z_1&z_2\\
0&0&0&...&z_0&z_1\\
\end{matrix}
\right]
\]
and $B$ is the $(n-1)\times(n-1)$ matrix:
\[
(**)\,\,\,B=\left[
\begin{matrix}
z_1&2z_2&3z_3&...&(n-2)z_{n-2}&(n-1)z_{n-1}\\
z_0&z_1&z_2&...&z_{n-3}&z_{n-2}\\
0&z_0&z_1&...&z_{n-4}&z_{n-3}\\
.&.&.&...&.&.\\
.&.&.&...&.&.\\
0&0&0&...&z_1&z_2\\
0&0&0&...&z_0&z_1\\
\end{matrix}
\right]
\]

Using $(*)$ and $(**)$ we obtain that $f(0,z_1,...,z_n)=z_1^n$ and $g(0,z_1,...,z_n)=z_1^{n-1}$, so that
$(z_0=0)$ is contained in the leaf $f^{n-1}-g^n=0$.
\end{rem}

Now, let
\[
\eta=fg\frac{d\left(f^{n-1}/{g^n}\right)}{f^{n-1}/g^n}=(n-1)g\,df-nf\,dg\,.
\]
Since $f^{n-1}/g^n$ is a first integral of $\fa_\om$ we must have $\eta=h.\,\om$, where $h$ is homogeneous polynomial of degree $deg(\eta)-deg(\om)=(2n-2)-n=n-2\ne0$.

\begin{lemma}\label{l32}
We have $h(z)=\a\, z_0^{n-2}$, where $\a=\pm n(n-1)$. 
\end{lemma}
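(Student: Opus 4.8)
The proof splits into two stages. First I would show that $h$ is annihilated by the weighted Euler operator $S=\sum_{j=1}^{n}j\,z_j\frac{\pa}{\pa z_j}$. Since $S$ acts on a monomial $z_0^{\si_0}\cdots z_n^{\si_n}$ by multiplication by $\sum_{j=1}^n j\,\si_j$, a nonnegative integer which vanishes only when $\si_1=\cdots=\si_n=0$, and since $h$ is homogeneous of degree $n-2$, the relation $S(h)=0$ already forces $h=\a\,z_0^{n-2}$ for some $\a\in\C^{*}$ (recall $h\ne0$). Second, I would evaluate the identity $\eta=h\,\om$ at one well-chosen point to compute $\a$.

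For the first stage the point is to compare $L_S\om$ and $L_S\eta$. For $\om=i_Ri_Si_N\cdots i_{N^{n-2}}\nu$ with $\nu=dz_0\wedge\cdots\wedge dz_n$, repeated use of the Cartan identity $L_Si_X=i_XL_S+i_{[S,X]}$, together with the bracket relations $[S,R]=0$, $[S,S]=0$, $[S,N^k]=-k\,N^k$ of Lemma \ref{l31} and $L_S\nu=\mathrm{div}(S)\,\nu=\tfrac{n(n+1)}{2}\,\nu$, gives
\[
L_S\om=\Big(\tfrac{n(n+1)}{2}-\sum_{k=1}^{n-2}k\Big)\,\om=(2n-1)\,\om .
\]
On the other hand, from $S(f)=n\,f$ and $S(g)=(n-1)\,g$ (established above) we get $L_S\,df=n\,df$, $L_S\,dg=(n-1)\,dg$, hence $L_S(g\,df)=(2n-1)\,g\,df$ and $L_S(f\,dg)=(2n-1)\,f\,dg$, so $L_S\eta=(2n-1)\,\eta$. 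Substituting $\eta=h\,\om$ and using both identities yields $S(h)\,\om=L_S\eta-(2n-1)\,h\,\om=0$, so $S(h)=0$ and $h=\a\,z_0^{n-2}$.

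For the second stage I would evaluate at the point $p\in\C^{n+1}$ with $z_0=z_1=1$, $z_2=\cdots=z_n=0$. There $R(p)=\frac{\pa}{\pa z_0}+\frac{\pa}{\pa z_1}$, $S(p)=\frac{\pa}{\pa z_1}$, $N^k(p)=\frac{\pa}{\pa z_k}+\frac{\pa}{\pa z_{k+1}}$ for $1\le k\le n-2$. Laplace-expanding the $(n+1)\times(n+1)$ determinant $\det\big[\,R(p)\mid S(p)\mid N^1(p)\mid\cdots\mid N^{n-2}(p)\mid v\,\big]$ first along the $z_n$-row and then along the $z_0$-row leaves an $(n-1)\times(n-1)$ upper-triangular block with $1$'s on the diagonal, so the determinant equals $v_n$; hence $\om_p=\pm\,dz_n$ (the sign being the fixed one coming from the ordering of the interior products). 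For $\eta$ I would use the explicit matrices $A$, $B$ with $f=\det A$, $g=\det B$ written down above: at $p$ both $A$ and $B$ are lower-triangular with $1$'s on the diagonal, so $f(p)=g(p)=1$; the variable $z_n$ does not occur in $B$, so $\frac{\pa g}{\pa z_n}\equiv0$; and $z_n$ occurs in $A$ only through the entry $A_{1,n}=n\,z_n$, whose cofactor at $p$ equals $\pm1$ (delete the first row and last column of the triangular matrix $A(p)$), so $\frac{\pa f}{\pa z_n}(p)=\pm n$. Therefore the $dz_n$-coefficient of $\eta_p=(n-1)g(p)\,df_p-n\,f(p)\,dg_p$ is $(n-1)\cdot1\cdot(\pm n)-n\cdot1\cdot0=\pm n(n-1)$, while the $dz_n$-coefficient of $(h\,\om)_p$ is $h(p)\cdot(\pm1)=\a$ since $z_0(p)=1$. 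Comparing the two gives $\a=\pm n(n-1)$.

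The one delicate point is the sign bookkeeping in the first stage: one must use $L_S\nu=+\mathrm{div}(S)\,\nu$ for the volume form $\nu$ (in contrast to $L_S\mu=-\mathrm{tr}(S)\,\mu$ for the polyvector $\mu$ used in the proof of the proposition), and accumulate correctly the contributions $-k$ from each $[S,N^k]$; a slip here would change the common eigenvalue $2n-1$ and destroy the conclusion $S(h)=0$. Once that is in place, the second stage is routine determinant arithmetic at the single point $p$.
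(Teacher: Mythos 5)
Your proof is correct, but it reaches the conclusion by a genuinely different route from the paper. The paper works entirely in the affine chart $(z_0=1)$: it computes the coefficients $A_n$ and $A_{n-1}$ of $\om$ from the determinant $(\star)$ (noting $\tfrac{\pa A_n}{\pa z_{n-1}}=(-1)^{n-1}(n-1)z_0^{n-1}$, $\tfrac{\pa A_{n-1}}{\pa z_{n}}=(-1)^{n-2}nz_0^{n-1}$), extracts the leading terms $f|_{(z_0=1)}=nz_n+\wt f$ and $g|_{(z_0=1)}=(n-1)z_{n-1}+\wt g$ from the matrices $A$ and $B$, and then matches the $dz_{n-1}$- and $dz_n$-coefficients of $\eta|_{(z_0=1)}$ and of $h|_{(z_0=1)}\,\om|_{(z_0=1)}$ to see that $h|_{(z_0=1)}$ is the constant $\pm n(n-1)$, whence $h=\pm n(n-1)z_0^{n-2}$ by homogeneity. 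You instead determine the \emph{shape} of $h$ by a symmetry argument: the eigenvalue computation $L_S\om=(2n-1)\om$ (from the Cartan formula, the brackets of Lemma 4.1 and $L_S\nu=\tfrac{n(n+1)}{2}\nu$) and $L_S\eta=(2n-1)\eta$ (from $S(f)=nf$, $S(g)=(n-1)g$, which are indeed available at that point of the paper) force $S(h)=0$, and the weighted-homogeneity of $S$ then pins $h$ down to $\a z_0^{n-2}$ without any coefficient computation; the constant $\a$ is recovered by evaluating the single $dz_n$-coefficient at $p=(1,1,0,\dots,0)$, where $\om_p=\pm dz_n$, $f(p)=g(p)=1$, $\pa g/\pa z_n\equiv0$ and $\pa f/\pa z_n(p)=\pm n$. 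Both arguments are sound and both leave the same sign ambiguity; yours is more conceptual and avoids the explicit expansion of $A_n$, $A_{n-1}$ and of $f|_{(z_0=1)}$, $g|_{(z_0=1)}$, at the cost of the Lie-derivative sign bookkeeping you rightly flag (the eigenvalues $2n-1$ must agree exactly), while the paper's chart computation has the side benefit of exhibiting $\om|_{(z_0=1)}$ and $\eta|_{(z_0=1)}$ explicitly, which is in the spirit of the rest of that section.
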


{\it Proof.}
The idea is to prove that in the affine chart $(z_0=1)$ we have $\eta|_{(z_0=1)}=n(n-1)\,\om|_{(z_0=1)}$, which implies $h(z)=\pm n(n-1)z_0^{n-2}$.

First of all, we note that
\[
(\star)\,\om=det\left[
\begin{matrix}
dz_0&dz_1&dz_2&...&dz_{n-2}&dz_{n-1}&dz_n\\
z_0&z_1&z_2&...&z_{n-2}&z_{n-1}&z_n\\
0&z_1&2z_2&...&(n-2)z_{n-2}&(n-1)z_{n-1}&nz_n\\
0&z_0&z_1&...&z_{n-3}&z_{n-2}&z_{n-1}\\
0&0&z_0&...&z_{n-4}&z_{n-3}&z_{n-2}\\
.&.&.&...&.&.&.\\
0&0&0&...&z_1&z_2&z_3\\
0&0&0&...&z_0&z_1&z_2\\
\end{matrix}
\right]:=\sum_{j=0}^nA_jdz_j
\]

Explicitly, we have
\[
A_{n}=(-1)^{(n-1)}z_0\,det\left[
\begin{matrix}
z_1&2z_2&...&(n-2)z_{n-2}&(n-1)z_{n-1}\\
z_0&z_1&...&z_{n-3}&z_{n-2}\\
0&z_0&...&z_{n-4}&z_{n-3}\\
.&.&...&.&.\\
0&0&...&z_1&z_2\\
0&0&...&z_0&z_1\\
\end{matrix}
\right]\,\implies
\]
$\frac{\pa A_n}{\pa z_{n-1}}=(-1)^{n-1}(n-1)z_0^{n-1}$.
We have also
\[
A_{n-1}=(-1)^{(n-2)}z_0\,det\left[
\begin{matrix}
z_1&2z_2&...&(n-2)z_{n-2}&nz_n\\
z_0&z_1&...&z_{n-3}&z_{n-1}\\
0&z_0&...&z_{n-4}&z_{n-2}\\
.&.&...&.&.\\
0&0&...&z_1&z_3\\
0&0&...&z_0&z_2\\
\end{matrix}
\right]\,\implies
\]
$\frac{\pa A_{n-1}}{\pa z_{n}}=(-1)^{n-2}nz_0^{n-1}$.
In particular, we get
\[
\om|_{(z_0=1)}=(-1)^{(n-1)}[-nz_n\,dz_{n-1}+(n-1)z_{n-1}\,dz_n]+\sum_{0<j<n-1}A_j(1,z_1,...,z_n)dz_j\,.
\]

On the other hand, from $(*)$ and $(**)$ we get

\begin{align*}
&\frac{\pa f}{\pa z_n}(z)=n\,z_0^{n-1}&\implies&f|_{(z_0=1)}=nz_n+\wt{f}(z_1,...,z_{n-1})\\
&\frac{\pa g}{\pa z_{n-1}}(z)=(n-1)z_0^{n-2}&\implies&g|_{(z_0=1)}=(n-1)z_{n-1}+\wt{g}(z_1,...,z_{n-2})\\
\end{align*}

Hence, we have
\[
\eta|_{(z_0=1)}=n(n-1)[-nz_n\,dz_{n-1}+(n-1)z_{n-1}\,dz_n]+\sum_{0<j<n-1}B_j(z)dz_j
\]
Since $\eta=h\,\om$, this implies that $h=\pm n(n-1)z_0^{n-2}$ and the lemma.\qed

\begin{lemma}
The Gauss map of $\fa_\om$ is birational.
\end{lemma}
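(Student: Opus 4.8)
The plan is to imitate the strategy already used for the lower-dimensional exceptional foliations $\E(2,3;3)$ and $\E(3,4;4)$: show that a generic hyperplane of $\p^n$ has exactly one tangency point with $\fa_\om$, and then invoke the explicit structure of the situation to build the inverse of $\G_\om$. Since $\fa_\om$ has degree $n-1$, Theorem \ref{p4} reduces the problem to computing $\sum_j \mu(\fa_\om, V_j)\,\deg(V_j)$ over the codimension-two components of $Sing(\fa_\om)$ and checking that $m(\fa_\om)=(n-1)^2+(n-1)+1-\sum_j \mu(\fa_\om,V_j)\,\deg(V_j)=1$; equivalently $\sum_j \mu(\fa_\om,V_j)\,\deg(V_j)=n^2-n+1$. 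First I would record that the Gauss map is indeed dominant: by Lemma \ref{l32} we have $\eta=\pm n(n-1)\,z_0^{n-2}\,\om$, and since $\eta=(n-1)g\,df-nf\,dg$ comes from a genuine first integral $f^{n-1}/g^n$ with $f,g$ irreducible of the claimed degrees, the leaves of $\fa_\om$ are the level sets of this first integral, whose general member is a smooth-in-codimension-one hypersurface; hence a generic hyperplane is transverse to it away from a finite set and meets it in Morse tangencies, so $\U$ is Zariski dense and $\G_\om$ is dominant by Remark \ref{r12}.

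The heart of the argument is the computation of $Sing_2(\fa_\om)$ and the transverse multiplicities. The natural move is to restrict $F=f^{n-1}/g^n$ to a generic $2$-plane $\Si\simeq\p^2$ of $\p^n$: by the argument in the proof of Theorem \ref{pM}, $\fa_\om|_\Si$ then has a first integral of the form $\bar f^{\,n-1}/\bar g^{\,n}$ with $\deg\bar f=n$, $\deg\bar g=n-1$, and $\bar f$, $\bar g$ generic in this pencil. The singular scheme of $\fa_\om|_\Si$ is cut out on $\Si$ by $(n-1)\bar g\,d\bar f-n\bar f\,d\bar g=0$, and a foliation of degree $n-1$ on $\p^2$ carries $(n-1)^2+(n-1)+1=n^2-n+1$ singularities counted with multiplicity. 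Of these, the ones lying on $\Si\cap Sing_2(\fa_\om)$ are the base points of the pencil $\{\mu \bar f^{\,n-1}+\nu \bar g^{\,n}\}$, i.e. the points of $(\bar f=\bar g=0)\cap\Si$; the remaining ones are the Morse tangencies. So the statement $m(\fa_\om)=1$ is equivalent to the claim that $\bar f=\bar g=0$ contributes exactly $n^2-n-1$ to the singularity count of $\fa_\om|_\Si$, leaving a single Morse tangency. I expect this to follow from the explicit determinantal formulas $(*)$ and $(**)$ for $f$ and $g$ — in particular from $f(0,z_1,\dots,z_n)=z_1^n$ and $g(0,z_1,\dots,z_n)=z_1^{n-1}$, which identify $(z_0=0)$ as the leaf $f^{n-1}-g^n=0$ and pin down the intersection geometry of $(f=0)$ with $(g=0)$ along the distinguished hyperplane. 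Alternatively, and perhaps more cleanly, one can choose a specific convenient $2$-plane (e.g. $z_j=0$ for the intermediate indices) on which $\bar f$ and $\bar g$ become the $n=2$ or $n=3$ models already treated in \S\ref{ss23'} and \S\ref{ss33}, and argue that genericity of $\Si$ does not change the count.

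Having established $m(\fa_\om)=1$, Theorem \ref{p4}(b) gives immediately that $\G_\om$ is birational, which is exactly the assertion of the lemma. The rational first integral $f^{n-1}/g^n$ is already furnished by the preceding proposition, so nothing further is needed there. \textbf{The main obstacle} is controlling the intersection multiplicities along $Sing_2(\fa_\om)$: the determinant $f$ and $g$ are nilpotent-orbit determinants, their common zero locus is a somewhat delicate variety (cf. the components $V_1,V_2,V_3$ of degrees $1,3,8$ in the $n=4$ case), and proving that its total contribution is precisely $n^2-n-1$ — rather than doing it case-by-case — requires either a uniform computation of $\deg(V_j)$ and $\mu(\fa_\om,V_j)$ for these orbit-type components, or a clean specialization-of-$\Si$ argument that bypasses the global geometry. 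I would attempt the latter first: restrict to a $2$-plane on which the restricted first integral is one of the already-understood low-dimensional models, verify there that the number of non-base-point singularities is $1$, and then upgrade to the generic $2$-plane by semicontinuity of the topological degree of the Gauss map within the dominant locus.
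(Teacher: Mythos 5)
Your overall strategy --- reduce to the numerical criterion of theorem \ref{p4} (more precisely, since $\fa_\om$ lives on $\p^n$ with $n\ge5$, to corollary \ref{c14}, whose proof is exactly the generic-$2$-plane slicing you describe) --- is the same as the paper's. But the proposal stops where the actual proof begins, and the one concrete reduction you do commit to is incorrect. The singularities of $\fa_\om|_\Si$ lying on $Sing_2(\fa_\om)$ are \emph{not} just the base points of the pencil $\{\mu f^{n-1}+\nu g^{n}\}$: the logarithmic form $\eta=(n-1)g\,df-nf\,dg$ has the zero divisor $z_0^{n-2}$ (lemma \ref{l32}), so the form $\om=\pm(n(n-1))^{-1}z_0^{-(n-2)}\eta$ that actually defines the foliation acquires extra codimension-two singular components inside the hyperplane $(z_0=0)$ which are not contained in $(f=g=0)$. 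In the paper these are $V_2=(z_0=z_1=0)$ and $V_3=(z_0=F=0)$ with $\deg V_3=n-1$; moreover the scheme $(f=g=0)$ is $V_1\oplus V_2^{\,n}$, so even on the base locus the naive pencil count overstates the multiplicity along $V_2$. Your identification therefore gets both the list of components and their multiplicities wrong; the off-by-one slips in your arithmetic (the target is $\sum_j\mu(\fa_\om,V_j)\deg(V_j)=n^2-n$, not $n^2-n+1$, and the contribution of $Sing_2$ must be $n^2-n$, not $n^2-n-1$) are symptoms of the same confusion.

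What is actually needed, and what the paper supplies, is: (i) $Sing(\fa_\om)\cap(z_0\ne0)=(f=g=0)\cap(z_0\ne0)$ is a single orbit $V_1$ of the Lie algebra $\G_{n-2}$, smooth and connected, along which $(f=0)$ and $(g=0)$ meet transversely, so $\mu(\fa_\om,V_1)=1$ and $\deg V_1=n(n-1)-n=n^2-2n$; (ii) the determinantal expression $(\star)$ for $\om$ identifies $Sing(\fa_\om)\cap(z_0=0)$ as $V_2\cup V_3$ of degrees $1$ and $n-1$; (iii) the total $(n^2-2n)+1+(n-1)=n^2-n=d^2+d$ triggers corollary \ref{c14}. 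None of (i)--(iii) appears in your proposal, and the shortcut you flag as your preferred route --- specializing $\Si$ so that the restriction becomes one of the $n=3,4$ models and invoking ``semicontinuity'' --- does not work: such a $\Si$ is not generic (it meets the degenerate strata of $Sing_2(\fa_\om)$), the determinants $f,g$ do not restrict to the lower-dimensional determinants when intermediate variables are set to zero, and the topological degree of the Gauss map is not semicontinuous in the direction you need. Finally, your dominance argument (generic level smooth in codimension one $\Rightarrow$ dense Morse tangencies) is not valid as stated, since the generic level $f^{n-1}-cg^{n}=0$ is badly singular along the base locus and smoothness in codimension one does not preclude degenerate Gauss behaviour; this hypothesis of corollary \ref{c14} does need to be checked, even if the paper itself is terse about it.
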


{\it Proof.}
The idea is to prove that $\fa_\om$ satisfies the hypothesis of Corollary \ref{c14} of Theorem \ref{p4}. 
Let us compute the irreducible components of $Sing_2(\fa_\om)$.
First of all, from Lemma \ref{l32} we have $\om|_{(z_0=1)} =\a^{-1}[(n-1)g\,df-nf\,dg]$, so that
\[
\left.
\begin{matrix}
df\wedge\om|_{(z_0=1)}=\a^{-1}\,nf\,df\wedge dg|_{(z_0=1)}\\
dg\wedge\om|_{(z_0=1)}=-\a^{-1}\,(n-1)g\,df\wedge dg|_{(z_0=1)}\\
\end{matrix}
\right\}
\implies
\]
$Sing(\fa_\om)\cap(z_0=1)=(f=g=0)\cap(z_0=1)$, because $df\wedge dg$ is not zero at $z_0=1$.
Moreover, $Sing(\fa_\om)\cap(z_0=1)$ is a union of (singular) orbits of the action of $\G_{n-2}$.
In fact, it is composed of just one orbit: the orbit of the point $p_o:=(1,0,...,0)\in(z_0=1)$, because 
\[
Sing(\fa_\om)\cap(z_0=1)\cap(z_1=z_2=...=z_{n-2}=0)=\{p_o\}\,,
\]
as the reader can check. In particular, $Sing(\fa_\om)\cap(z_0=1)$ is a connected smooth sub-variety of codimension two of $(z_0=1)\simeq\C^n$. We denote it by $V_1$.
Since $(f=0)$ cuts transversely $(g=0)$ along $(z_0=1)$, the transversal type of $\fa_\om$ along $V_1$ has linear part $(n-1)u\,dv-nv\,du$, so that $\mu(\fa_\om,V_1)=1$.

The varieties $(f=0)$ and $(g=0)$ cut $(z_0=0)$ along $V_2:=(z_0=z_1=0)$, but they are tangent along $V_2$ in order $n-1$.
This implies that, as a scheme, $(f=g=0)$ has $V_2$ appearing with multiplicity $n$, reflecting the higher-order tangency, while $V_1$ appears with multiplicity one. In particular, since $deg(f)=n$ and $deg(g)=n-1$, we have $deg(V_1)=n(n-1)-n=n^2-2n$.

Now, from $(\star)$ we have $Sing(\fa_\om)\cap(z_0=0)=(z_1.\,F=0)$, where
\[
F(z_1,...,z_n)=det\left[
\begin{matrix}
1&z_2&...&z_{n-2}&z_{n-1}&z_n\\
1&2z_2&...&(n-2)z_{n-2}&(n-1)z_{n-1}&nz_n\\
0&z_1&...&z_{n-3}&z_{n-2}&z_{n-1}\\
0&0&...&z_{n-4}&z_{n-3}&z_{n-2}\\
.&.&...&.&.&.\\
0&0&...&z_1&z_2&z_3\\
0&0&...&0&z_1&z_2\\
\end{matrix}
\right]\,\implies
\]
$Sing(\fa_\om)\cap(z_0=0)=V_2\cup V_3$, where $V_3=(z_0=F=0)$ and $deg(V_3)=n-1$.

Finally, since $deg(V_1)+deg(V_2)+deg(V_3)=n^2-n=(n-1)^2+(n-1)=deg(\fa_\om)+deg^2(\fa_\om)$ the Gauss map of $\fa_\om$ is birational, by corollary \ref{c14}.\qed

\vskip.1in

For example, in the case $n=5$ we have:

\begin{align*}
f&=z_1^5-5z_0z_1^3z_2+5z_0^2z_1z_2^2+5z_0^2z_1^2z_3-5z_0^3z_2z_3-5z_0^3z_1z_4+5z_0^4z_5\\
g&=z_1^4-4z_0z_1^2z_2+2z_0^2z_2^2+4z_0^2z_1z_3-4z_0^3z_4.
\end{align*}

\noindent  In particular, the form defining the foliation is $\om=\sum_{j=0}^5A_j(z)\,dz_j$, where:

\begin{align*}
A_0&=z_1z_2^4-4z_1^2z_2^2z_3+z_0z_2^3z_3+2z_1^3z_3^2+2z_0z_1z_2z_3^2+4z_1^3z_2z_4-6z_0z_1z_2^2z_4-5z_0z_1^2z_3z_4+\\ &3z_0^2z_2z_3z_4+3z_0^2z_1z_4^2-4z_1^4z_5+11z_0z_1^2z_2z_5-3z_0^2z_2^2z_5-6z_0^2z_1z_3z_5+z_0^3z_4z_5\\
A_1&=z_0(-2z_2^4+7z_1z_2^2z_3-3z_1^2z_3^2-5z_0z_2z_3^2-6z_1^2z_2z_4+\\
&6z_0z_2^2z_4+7z_0z_1z_3z_4-4z_0^2z_4^2+5z_1^3z_5-10z_0z_1z_2z_5+5z_0^2z_3z_5)\\
A_2&=z_0(z_1z_2^3-2z_1^2z_2z_3-3z_0z_2^2z_3+4z_0z_1z_3^2+z_1^3z_4+3z_0z_1z_2z_4-4z_0^2z_3z_4-5z_0z_1^2z_5+5z_0^2z_2z_5)\\
A_3&=z_0(-z_1^2z_2^2+2z_0z_2^3+z_1^3z_3-z_0z_1z_2z_3-z_0z_1^2z_4-4z_0^2z_2z_4+5z_0^2z_1z_5)\\
A_4&=z_0(z_1^3z_2-3z_0z_1z_2^2-z_0z_1^2z_3+5z_0^2z_2z_3+z_0^2z_1z_4-5z_0^3z_5)\\
A_5&=z_0(-z_1^4+4z_0z_1^2z_2-2z_0^2z_2^2-4z_0^2z_1z_3+4z_0^3z_4).
\end{align*}

The singular set $Sing(\mathcal{F}_\om)$ has three irreducible components, say $V_1$, $V_2$ and $V_3$, where $V_3\cap (z_0=1)$ is $(f|_{z_0=1}=g|_{z_0=1}=0)$. The closure of this variety is irreducible of degree $15$. Finally, $Sing(\fa_\om)\cap(z_0=0)=V_1\cup V_2$, where:

\begin{align*}
&V_1=(z_0=z_1=0)\\
&V_2=(z_0=z_2^4-4z_1z_2^2z_3+2z_1^2z_3^2+4z_1^2z_2z_4-4z_1^3z_5=0)\\
\end{align*}

\noindent the first of degree $1$ and the second of degree $4$.
\\

\vskip.2in

\bibliographystyle{amsalpha}

\begin{thebibliography}{10}


\bibitem[1]{macaulay2}
 Bott, C. J. and Hassanzadeh, Seyed Hamid and Schwede, Karl and Smolkin, Daniel:
\newblock {Rational{M}aps, a package for {M}acaulay2;}
\newblock{J. Softw. Algebra Geom. 12, 17-26, (2022).}

\bibitem[2]{br1}
 Brunella, Marco:
\newblock {Birational geometry of foliations;}
\newblock{Text book for a course in the First Latin America Congress of Mathematics, IMPA (2000).}

\bibitem[3]{br}
 Brunella, Marco:
\newblock {Sur les feuilletages de l'espace projectif ayant une composante de Kupka;}
\newblock{Enseign. Math. (2) 55 , no. 3-4, 227\,-\, 234, (2009).}


\bibitem[4]{ca}
Calvo Andrade, Omegar: 
\newblock {Irreducible components of the space of
holomorphic foliations;}
\newblock{Math. Annalen, no. 299, pp.751-767, (1994).}

\bibitem[6]{CLNS}
Camacho, C\'esar; Lins Neto, Alcides; Sad, Paulo: 
\newblock {Topological invariants and equidesingularization for holomorphic  vector fields;}
\newblock {J. of Diff. Geometry, vol. 20, n$^o$ 1, pg. 143-174, (1984).}

\bibitem[7]{sc}
Cantat, Serge:
\newblock {The Cremona group in two variables;}
\newblock {Proc. of the sixth European Congress of Math. p. 211-225, (2013).}

\bibitem[8]{cy}
Cantat, Serge; de Cornulier, Yves:
\newblock{Commensuring actions of the birational group and groups of pseudo-automorphisms;}
\newblock{Jr. École Polytechnic Math. 6, pp. 767-809, (2013).}

\bibitem[9]{jc} Cerveau, Dominique; Deserti, Julie :
\newblock{Feuilletages et actions de groupes sur les espaces projectifs;}
\newblock{Mém. Soc. Math. de France (N.S.), (103): vi+124 pp. (2006).} 

\bibitem[10]{cl}
Cerveau, Dominique; Lins Neto, Alcides:
\newblock{Irreducible components of the space of holomorphic foliations of degree two in $\C\p(n)$, $n\ge3$}
\newblock{Ann. of Math.  pp. 577-612, (1996).}

\bibitem[11]{cerveau-lins-pisa}
Cerveau, Dominique; Lins Neto, Alcides:
\newblock{A structural theorem for codimension-one foliations on {$\Bbb
              P^n$}, {$n\geq 3$}, with an application to degree-three
              foliations}
\newblock{Ann. Sc. Norm. Super. Pisa Cl. Sci. (5), (2013).}

\bibitem[12]{cerveau-moussu}
Cerveau, Dominique; Moussu, Robert:
  \newblock{Extension des facteurs int\'egrants et applications},
  \newblock{C. R. Acad. Sci. Paris S\'er. I Math., S\'erie I. Math\'ematique, 294, pp. 17-19, (1982).}

\bibitem[13]{cox}
Cox, David A.; Little, John; O’Shea, Donal:
\newblock {Ideals, Varieties, and Algorithms;}
\newblock {Undergraduate Texts in Mathematics, Springer International Publishing Switzerland 2015.}


\bibitem[14]{cp}
Cuikerman, Fernando; Pereira, Jorge V.:
\newblock {Stability of holomorphic foliations with split tangent sheaf;}
\newblock {Am. Jr. of Math., 130 (2), pp. 413-439, (2008).}

\bibitem[15]{rrj}
da Costa, Raphael Constant; Lizarbe, Ruben; Pereira, Jorge Vit\'{o}rio: 
\newblock{Codimension one foliations of degree three on projective spaces;}
\newblock{Bull. Sci. Math. 174, (2022).}


\bibitem[16]{jfh}
Déserti, Julie; Han, Fréderic:
\newblock {On cubic maps of $\p^3$;}
\newblock {Soc. Math. de France, tome 144, fasc. 2, (2016).}

\bibitem[17]{dp}
Dimca, Alexandru; Papadima, Stefan:
\newblock{Hypersurface complements, Milnor fibers and higher homotopy groups of arrangments.}
\newblock {Ann. of Math. (2) 158, no. 2, pp. 473–507, (2003)}
 
\bibitem[18]{do}
Dolgachev, Igor V.:
\newblock{Polar Cremona transformations.}
\newblock {Michigan Math. J. 48, pp. 191–202, (2000).}

\bibitem[19]{tj}
Fassarella, Thiago; Pereira, Jorge V.:
\newblock {On the degree of polar transformations. An approach through logarithmic foliations;}
\newblock {Selecta Mathematica, Vol. 13, No 2, pp. 239-252, (2007).}
\newblock{arXiv:0705.1867}

\bibitem[20]{Fassarella2}
Fassarella, Thiago:
\newblock {Sobre a Aplic\~ao de Gauss de Folheac\~oes
Holomorfas em Espacos Projetivos;}
\newblock {Ph.D. Thesis, IMPA, (2008).}

\bibitem[21]{fassarella}
Fassarella, Thiago:
\newblock {Foliations with degenerate {G}auss maps on {$\Bbb P^4$};}
\newblock {Ann. Inst. Fourier (Grenoble), 60, 455-487, (2010).}



\bibitem[22]{ha}
Hartshorne, Robin:
\newblock{Algebraic Geometry;}
\newblock{Graduate Texts in Mathematics 52; Springer-Verlag, (1977).}

\bibitem[23]{j}
 Jouanolou, Jean-Pierre:
\newblock {\'Equations de Pfaff alg\`ebriques;}
\newblock {Lecture Notes in Math. 708, Springer-Verlag, Berlin, (1979).}

\bibitem[24]{laumon}
Laumon, G\'{e}rard:
\newblock {Degr\'{e} de la vari\'{e}t\'{e} duale d'une hypersurface \`a
              singularit\'{e}s isol\'{e}es;}
\newblock {Bull. Soc. Math. France, 104, 1, pp. 51.63, (1976).}



\bibitem[25]{ln3}
Lins Neto, Alcides:
\newblock {Construction of singular holomorphic vector fields and foliations in dimension two;}
\newblock {Journal of Diff. Geometry 26, pg. 1-31,  (1987).}

\bibitem[26]{ln}
Lins Neto, Alcides:
\newblock{A note on projective Levi flats and minimal sets of algebraic
foliations;}
\newblock{Annales de L'Institut Fourier, tome 49, fasc. 4, 1369-1385 (1999).}


\bibitem[27]{ln1}
Lins Neto, Alcides:
\newblock {Locally transversely product singularities;}
\newblock{Annales Lebesgue, vol. 4, pp. 485-502, (2021).}


\bibitem[28]{ls}
Lins Neto, Alcides; Scardua, Bruno:
\newblock {Complex Algebraic Foliations;}
\newblock {De Gruyter expositions in mathematics, De Gruyter, (2020).}

\bibitem[29]{fl} Loray, Frank:
\newblock{Dynamique des groupes d'automorphismes de ($\C,0$);}
\newblock{Bol. Soc. Mat. Mexicana (3) 5, no. 1, pp. 1–23, (1999).}


\bibitem[30]{lr}
Loray, Frank; Rebelo, Julio C.:
\newblock {Minimal, rigid foliations by curves on $\C\p^n$;}
\newblock {J. Eur. Math. Soc. 5 , no.2, pp. 147–201, (2003)}


\bibitem[31]{mi}
Milnor, John:
\newblock{Topology from the differentiable viewpoint;}
\newblock{The University Press of Virginia, (1965).} 

\bibitem[32]{pan}
Pan, Iv\'an:
\newblock {On {C}remona transformations of {$\Bbb{P}^3$} with all
              possible bidegrees;}
\newblock {C. R. Math. Acad. Sci. Paris, 351, pp. 11-12, (2013).}

\bibitem[33]{pereira-pirio}
Pereira, Jorge Vit\'orio; Pirio, Luc:
\newblock{An invitation to web geometry;}
\newblock{IMPA Monographs (2), Springer, Cham, (2015).}

\bibitem[34]{teissier}
Teissier, Bernard:
\newblock{Cycles \'{e}vanescents, sections planes et conditions de
              {W}hitney;}
\newblock{Singularit\'{e}s \`a {C}arg\`ese. ({R}encontre
              {S}ingularit\'{e}s {G}\'{e}om. {A}nal., {I}nst. \'{E}tudes
              {S}ci., {C}arg\`ese). Ast\'{e}risque, pp.285-362, (1972).}
 

\end{thebibliography}

\end{document}